\newtheorem{theorem}{Theorem}[section]
\newtheorem{lemma}[theorem]{Lemma}
\newtheorem{prop}[theorem]{Proposition}
\newtheorem{corollary}[theorem]{Corollary}
\theoremstyle{definition}
\newtheorem{defn}[theorem]{Definition}
\newtheorem{remark}[theorem]{Remark}
\newtheorem{hypothesis}{Hypothesis}[theorem]
\def\gl{\mathfrak{gl}(n)}
\def\dim{\text{\rm dim\,}}
\def\bz{{\bar 0}}
\def\bo{{\bar 1}}
\def\id{\textsf{id}}
\def\ggg{\mathfrak{g}}
\def\hhh{\mathfrak{h}}
\def\nnn{\mathfrak{n}}
\def\bbb{\mathfrak{b}}
\def\bbk{\mathbf{k}}
\def\bk{\mathbf{k}}
\def\uk2{U_{\chi}{(\mathfrak{gl}}(2))}
\def\bo{{\bar 1}}
\def\bz{{\bar 0}}
\def\ev{{\text{ev}}}
\newcommand{\bbz}{\mathbb{Z}}
\newcommand{\bbn}{\mathbb{N}}
\newcommand\comment[1]
\numberwithin{equation}{section}
\def\ggg{\mathfrak{g}}
\def\gl{\mathfrak{gl}}
\def\ca{\mathcal{A}}
\def\cz{\mathcal{Z}}
\def\osp{\mathfrak{osp}}
\def\g{\mathfrak{g}}
\def\ggg{\mathfrak{g}}
\def\hhh{\mathfrak{h}}
\def\bbb{\mathfrak{b}}
\def\nnn{\mathfrak{n}}
\def\fw{\mathfrak{W}}
\def\bbf{\mathbb{F}}
\def\bbz{\mathbb{Z}}
\def\bk{\mathbf{k}}
\def\bbn{\mathbb{N}}
\def\bk{{\mathds{k}}}
\def\bg{{\mathbf{g}}}
\def\bo{{\bar 1}}
\def\bz{{\bar 0}}
\def\ev{{\text{ev}}}
\def\Lie{\text{Lie}}
\def\ad{\text{ad}}
\def\ker{\text{Ker}}
\def\id{\mathsf{id}}
\def\Frac{\mbox{Frac}}
\def\rank{\text{rank}}
\def\Spec{\text{Spec}}
\def\bk{{\mathbf k}}
\def\bz{{\bar 0}}
\def\bo{{\bar 1}}
\def\ad{\text{ad}}
\def\ev{\text{ev}}
\def\Lie{\text{Lie}}
\def\Frac{\text{Frac}}
\def\Spec{\text{Spec}}
\def\Det{\text{Det}}
\def\id{\text{id}}
\def\gl{\mathfrak{gl}}
\def\osp{\mathfrak{osp}}
\def\mc{\mathcal}
\def\mf{\mathfrak}
\def\bbz{\mathbb Z}
\def\bbf{\mathbb F}
\def\fw{\mathfrak {W}}
\def\cz{\mathcal {Z}}
\def\cd{\mathcal {D}}
\def\g{\textsf{g}}
\def\fs{{\mathfrak s}}
\def\b0{{\bar 0}}
\def\b1{{\bar 1}}
\def\ggg{{\mathfrak g}}
\def\hhh{{\mathfrak h}}
\def\bbb{{\mathfrak b}}
\def\nnn{{\mathfrak n}}
\def\pr1{\text{pr}_1}
\def\fw{\mathfrak{W}}
\def\hc{\text{hc}}
\def\bg{\mathbf{g}}
\def\bG{\mathbf{G}}
\def\bZ{\mathbf{Z}}
\def\bzw{\widetilde{\mathbf{Z}}}
\def\Frac{\text{Frac}}
\def\frakZ{\mathfrak{Z}}
\def\frakzz{\mathfrak{Z}_p}
\def\frakzo{\mathfrak{Z}_\hc}
\def\ad'{{\textsf{ad}'}}
\def\ad{{\textsf{ad}}}
\def\cald{\mathcal{D}}
\def\scrl{\mathscr{L}}
\def\scra{\mathscr{A}}
\begin{document}
\title[Birational equivalence of the Zassenhaus varieties]{Birational equivalence of the Zassenhaus varieties for  basic classical Lie
superalgebras and their purely-even reductive Lie subalgebras in odd characteristic}

{\color{blue}

\author{Bin Shu}
\address{School of Mathematical Sciences, Ministry of Education Key Laboratory of Mathematics and Engineering Applications \& Shanghai Key Laboratory of PMMP,  East China Normal University, No. 500 Dongchuan Rd., Shanghai 200241, China} \email{bshu@math.ecnu.edu.cn}
\author{Lisun zheng}
\address{College of Sciences, Shanghai Institute of Technology, No. 100 Haiquan Rd., Fengxian District, Shanghai 201418, China}
\email{lszhengmath@hotmail.com}
\author{Ye Ren}
\address{School of Mathematical Sciences,   East China Normal University, No. 500 Dongchuan Rd., Shanghai 200241,   China}
\email{523922184@qq.com}
}

\thanks{Mathematics Subject Classification (2010): Primary 17B50, Secondary 17B35, 17B45, 14E08, 14M20}
 \thanks{The Key words:  basic classical Lie superalgegbras,  centers of universal enveloping algebras, maximal spectrums, Zassenhaus varieties}

\thanks{This work is partially supported by the National Natural Science Foundation of China (12071136 and
12271345), and by Science and Technology Commission of Shanghai Municipality (No. 22DZ2229014).}

\begin{abstract} Let $\ggg=\ggg_\bz\oplus\ggg_\bo$ be a basic classical Lie superalgebra  over an algebraically closed field $\bk$ of characteristic $p>2$.  Denote by $\cz$ the center of the universal enveloping algebra $U(\ggg)$. Then $\cz$ turns out to be finitely-generated purely-even commutative algebra without nonzero divisors.
 In this paper, we demonstrate that the fraction $\Frac(\cz)$ is isomorphic to $\Frac(\frakZ)$ for the center $\frakZ$ of $U(\ggg_\bz)$. Consequently, both Zassenhaus varieties for $\ggg$ and $\ggg_\bz$ are birationally equivalent via a subalgebra $\widetilde\cz\subset\cz$, and $\Spec(\cz)$ is rational   under the standard hypotheses.
\end{abstract}

\maketitle

\section*{Introduction}

The main purpose of the present paper is to develop  the theory of centers of the universal enveloping algebra $U(\ggg)$ for basic classical Lie superalgebras $\ggg=\ggg_\bz\oplus\ggg_\bo$ in odd characteristic,  and to establish the relation with the centers of $U(\ggg_\bz)$.

\subsection{} By  Kac's classification result of finite-dimensional simple Lie superalgebras over complex numbers, a complex finite-dimensional simple Lie superalgebra is either isomorphic to  one from the classical series, or to one from the Cartan-type series (\cite{Kac2}).  Recall that basic classical Lie superalgebras over complex numbers contain the list of simple Lie superalgebras
 $\frak{sl}(m|n)$,  $\frak{osp}(m|n)$;
$\text{F}(4)$,
$\text{G}(3)$,
$\text{D}(2,1,\alpha)$, along with
$\frak{gl}(m|n)$ (note that $\frak{sl}(m|m)$ is not a simple Lie superalgebra, containing a nontrival center).  Let $L=L_\bz \oplus L_\bo$
be a complex basic classical Lie superalgbra with Cartan subalgebra $\hhh$ and Weyl group $\frak{W}$. Denote by $Z(L) = Z(L)_\bz\oplus Z(L)_\bo$ the
center of the enveloping superalgebra $U(L)$, where $Z(L)_i =\{z\in U(L)\mid za = (-1)^{ik}az, \forall a\in L_k \mbox{ for } k\in \bbz_2\}$, $i\in \bbz_2$.

By means of Harish-Chandra homomorphism, the center $Z(L)$ can be imbedded into $S(\hhh)^\fw$, where $S(\hhh)^\fw$ denotes the invariants of the symmetric algebra of $\hhh$ under the action of $\fw$ (cf. \cite[Proposition 2.20]{CW}, \cite{Kac3}). The center is isomorphic to the image of the Harish-Chandra homomorphism, which can be described precisely in the case $\gl$ and $\osp$ (cf. \cite[\S2.25]{CW}). Hence the linkage principle for $\gl$ and $\osp$  can be read off from the center structure (cf. \cite[\S2.26]{CW}, and thereby provides more information for their representation theory.

  Basic classical Lie superalgebras can be well defined over fields of odd characteristic.  The counterparts (by some suitable modification if necessary) of the above series over complex numbers surely can be naturally expected to be the most important series in the classification of simple Lie superalgebras  over an algebraically closed field of odd characteristic (cf. \cite{BKLS},  \cite{ZL}).

   \subsection{} Basic classical Lie superalgebras admit even parts which are reductive Lie algebras. In some sense, the representation theory  of these Lie superalgebras over the complex number field can be regarded as an algebra extension of that of reductive Lie algebras (see for example, \cite{CLW}, \cite{BLW}, {\sl{etc}.}).  The same thing can be expected in modular case (see \cite{BKu}, \cite{SW}, \cite{CSW}, \cite{PS}, \cite{LS}, {\sl{etc}.}). Recall, by Veldkamp's theorem (\cite{Ve}, \cite{KW}, \cite{MR}, \cite{BGo}, {\sl etc.}), the center $\frakZ$ of the universal enveloping algebra $U(\g)$ for a reductive Lie algebra $\bg$ over $\bk$ is generated by the $p$-center $\frakzz$ and the Harish-Chandra center $\frakzo$. Furthermore, the Zassenhaus variety (the spectrum of maximal ideals of $\frakZ$) is rational (see \cite{T}).

   In the present paper, we will develop the theory of centers of universal enveloping algebras for modular basic classical Lie superalgebras.  More precisely, for a basic classical Lie superalgebra $\ggg=\ggg_\bz\oplus\ggg_\bo$ which is the Lie superalgebra of algebraic supergroup $G$, and $\ggg_\bz=\Lie(G_\ev)$  with $G_\ev$ being the purely-even subgroup of $G$ (see \cite{FG1, FG2, Gav}), we first show that the center $\cz=\text{Cent}(U(\ggg))$ is a finitely-generated purely-even commutative algebra (see Proposition \ref{cev}), has no nonzero-divisors (see Lemma \ref{lem: no nonzero divisors}). Furthermore, we can define the $p$-center $\cz_p$ and the Harish-Chandra center $\cz_{\text{hc}}:=Z(U(\ggg))^{G_\ev}$. This $G_\ev$ is a connected reductive algebraic group.  Denote by $\widetilde\cz$ the subalgebra of the center $\cz:=Z(U(\ggg))$ generated by $\cz_p$ and $\cz_{\text{hc}}$.  We first show the structure theorem which says that the fractions $\Frac(\cz)$ and $\Frac(\widetilde\cz)$ are isomorphic (see Theorem \ref{secondtheorem}).

   Next, we further  prove that $\Frac(\widetilde\cz)$ is isomorphic to $\Frac(\frakZ)$ for the center $\frakZ$ of $U(\ggg_\bz)$, and then   $\Spec(\cz)$ is birationally equivalent to $\Spec(\frakZ)$ via $\widetilde\cz$ (see Theorem \ref{thm: two cent birational eq}). Consequently, the Zassenhaus variety $\Spec(\cz)$ is  rational, which means $\Frac(\cz)$ is purely transcendental over $\bk$.

   \subsection{} The present paper is divided into two parts. One is about the center structure of the enveloping algebra of a modular basic classical Lie superalgebra (Section 2).  The other one is devoted to showing the birational equivalence of the Zassehaus varieties for basic classical Lie superalgebras and the purely-even reductive parts, and their rationality (Section 3). The most contents of Section 2 are already accepted for publishing as a part of the survey paper titled ``A survey on centers and blocks for universal enveloping algebras of Lie superalgebras" by the first two authors (B. Shu and L. Zheng) in the 18th ICCM proceedings (\cite{ShuZheng}).


\subsection{} By vector spaces, subalgebras, ideals, modules, and submodules {\sl etc.}, we mean in the super sense unless otherwise specified,  throughout the paper.

\section{Preliminaries: modular basic classical Lie superalgebras}

Throughout the paper, we always assume  $\ggg=\ggg_\bz\oplus\ggg_\bo$ is  a finite-dimensional Lie superalgebra over  an algebraically closed field $\bk$ of characteristic $p>0$. Let $U(\ggg)$ be the universal enveloping superalgebra of $\ggg$. { In general, for a vector superspace $V=V_\bz\oplus V_\bo$ we denote by $|v|$ the parity of a $\bbz_2$-homogeneous element $v\in V$ with $|v|\in \{\bz,\bo\}$. We further express this  by writing  $v\in V_{|v|}$. }

\subsection{Center and anti-center of $U(\ggg)$}\label{sec: 1.1}
Momentarily, we fix an ordered basis $\{x_1,\cdots,x_s\}$ of $\ggg_\bz$ and an ordered basis $\{y_1,\cdots,y_t\}$ of $\ggg_\bo$ for the arguments below.

\subsubsection{} By the PBW theorem,  the enveloping
superalgebra $U(\ggg)$ admits a basis
\begin{align}\label{eq: pbw basis}
x_{1}^{n_{1}}\cdots x_{s}^{n_{s}}y_{1}^{\epsilon_{1}}\cdots y_{t}^{\epsilon_{t}},
 n_i\in\bbn, \; \epsilon_{j}\in\{0,1\}\mbox{ for }i=1,\cdots,s, j=1,\cdots,t.
 \end{align}
Denote by $\cz(\ggg)$ the center of $U(\ggg)$, i.e. $\cz(\ggg):=\{u\in U(\ggg)\mid \ad x(u)=0\;\; \forall x\in \ggg\}$ where $\ad x(u)=xu-(-1)^{|x||u|}ux$ for homogeneous elements $x\in \ggg_{|x|}$ and $u\in U(\ggg)_{|u|}$.
For any $z\in\cz(\ggg)$, $z=z_{0}+z_{1},z_{i}\in U(\ggg)_{\bar
i}, i= 0,1$, we have $z_{i}\in\cz(\ggg)$. This is to say, $\cz(\ggg)$ is a $\mathbb{Z}_{2}$-graded subalgebra of
$U(\ggg)$.
 In the sequel, we will often write $\cz(\ggg)$ simply as $\cz$ provided that the context is clear.

 \subsubsection{} Following Gorelik in \cite{Gor}, we introduce the anti-center $\ca(\ggg)$ of $U(\ggg)$ which is defined below
 $$\ca(\ggg):=\{u\in U(\ggg)\mid \ad'x(u)=0\; \forall x\in \ggg  \}$$
where $\ad'w$ is a linear transform of $U(\ggg)$ by defining  via
$\ad'w(u):=wu- (-1)^{|w|(|u|+1)}uw$ for  homogeneous elements $w\in U(\ggg)_{|w|}$ and  $u\in U(\ggg)_{|u|}$.
{
 Take an ordered basis $\{y_i\mid i=1,\ldots,t\}$ of $\ggg_\bo$. Then $t$ is an even number when $\ggg$ is a basic classical Lie superalgebra. For any subset $J$ of $\underline t:=\{1,\ldots,t\}$, set $y_J=\prod_{j}y_j$ where  the product is taken with respect to the given order. For the wedge product algebra $\bigwedge^\bullet \ggg_\bo=\sum_{k=0}^t\bigwedge^k \ggg_\bo$,  the top component $\bigwedge^t\ggg_\bo=\bk y_{\underline t}$ is a one-dimensional $\ggg_\bz$-module under adjoint action.

The arguments  in \cite[Lemma 3.1.2]{Gor} and \cite[Theorem 3.3]{Gor} are independent of the characteristic of the base field. This yields  the following observation
\begin{align}\label{eq: equal with anticen}
U(\ggg)=(\ad'U(\ggg))U(\ggg_\bz),
\end{align}
and the modular counterpart of a special case of Gorelik's theorem \cite[Theorem 3.3]{Gor}
\begin{lemma}\label{thm: anti-center nonzero} Let $\ggg=\ggg_\bz\oplus \ggg_\bo$ be a basic classical Lie superlagebra. There exists a nonzero element $v_\emptyset\in U(\ggg)$ such that  the map $\Phi : u\rightarrow (ad' v_\emptyset)u$ provides
a linear isomorphism from the $\ggg_\bz$-invariants of $U(\ggg_\bz)\otimes \bigwedge^t\ggg_\bo$ onto the
anti-center $\ca(\ggg)$.
\end{lemma}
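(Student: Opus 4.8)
The plan is to transplant Gorelik's arguments \cite[Lemma 3.1.2, Theorem 3.3]{Gor} to characteristic $p>2$, the point being that her proofs use only the PBW theorem, the adjoint $\ggg_\bz$-module structure on $U(\ggg)$ together with its right-freeness over $U(\ggg_\bz)$, and invariant theory for the connected reductive group $G_\ev$ — but neither the symmetrization isomorphism $U(\ggg)\cong S(\ggg)$ of $\ggg$-modules nor complete reducibility of $\ggg_\bz$-modules, both of which may fail in characteristic $p$. To set up, equip $U(\ggg)$ with the \emph{odd filtration} $0=F_{-1}\subseteq F_0\subseteq\dots\subseteq F_t=U(\ggg)$, where $F_k$ is the $\bk$-span of the PBW monomials \eqref{eq: pbw basis} with $\epsilon_1+\dots+\epsilon_t\le k$. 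Since $[\ggg_\bz,\ggg_\bo]\subseteq\ggg_\bo$ this filtration is $\ad\,\ggg_\bz$-stable with $F_0=U(\ggg_\bz)$, so from $F_t=U(\ggg)$ we get a $\ggg_\bz$-equivariant linear surjection $\pi\colon U(\ggg)\twoheadrightarrow F_t/F_{t-1}\cong U(\ggg_\bz)\otimes\bigwedge^t\ggg_\bo$. Two remarks valid in any characteristic: since $\ad'x=\ad x$ for even $x$, the anti-center $\ca(\ggg)$ is contained in the centralizer $U(\ggg)^{\ggg_\bz}$, whence $\pi(\ca(\ggg))\subseteq\bigl(U(\ggg_\bz)\otimes\bigwedge^t\ggg_\bo\bigr)^{\ggg_\bz}$; and $\bigwedge^t\ggg_\bo$ is the trivial $\ggg_\bz$-module, because $\ggg_\bz$ preserves the nondegenerate skew-symmetric form to which the invariant form on $\ggg$ restricts on $\ggg_\bo$, so $\ad x|_{\ggg_\bo}$ is traceless for all $x\in\ggg_\bz$. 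Consequently $\bigl(U(\ggg_\bz)\otimes\bigwedge^t\ggg_\bo\bigr)^{\ggg_\bz}=\frakZ\otimes y_{\underline t}\cong\frakZ$, a nonzero space.

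First I would prove that $\pi$ is injective on $\ca(\ggg)$. Let $0\ne a\in\ca(\ggg)$ be $\bbz_2$-homogeneous ($\ca(\ggg)$ is $\bbz_2$-graded, by the argument that makes $\cz$ graded), pass to $\gr U(\ggg)\cong S(\ggg)=S(\ggg_\bz)\otimes\bigwedge^\bullet\ggg_\bo$ for the PBW filtration — a super-commutative algebra — and let $\sigma(a)\ne0$ be the leading symbol. For $y\in\ggg_\bo$ one has $\ad'y(a)=ya+(-1)^{|a|}ay$, and by super-commutativity the degree-$(\deg a+1)$ component of this element in $\gr U(\ggg)$ equals $y\cdot\sigma(a)+(-1)^{|a|}\sigma(a)\cdot y=2\,\bigl(y\wedge\sigma(a)\bigr)$; this is the sole use of $p>2$. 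Since $\ad'y(a)=0$ we obtain $y\wedge\sigma(a)=0$ for every $y\in\ggg_\bo$; but a nonzero element of $S(\ggg_\bz)\otimes\bigwedge^k\ggg_\bo$ with $k<t$ is, upon expanding over a basis of $\bigwedge^k\ggg_\bo$, not annihilated by $y\wedge(-)$ for all $y$. Hence $\sigma(a)\in S(\ggg_\bz)\otimes\bigwedge^t\ggg_\bo$, and since every PBW monomial occurring in an element of $F_{t-1}$ has fewer than $t$ odd factors, this forces $a\notin F_{t-1}$, i.e.\ $\pi(a)\ne0$.

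Next, following \cite{Gor} and using \eqref{eq: equal with anticen} — whose proof \cite[Lemma 3.1.2]{Gor} is characteristic-free — one constructs $v_\emptyset\in U(\ggg)$ such that the map $\Phi=\ad'v_\emptyset$ sends $\bigl(U(\ggg_\bz)\otimes\bigwedge^t\ggg_\bo\bigr)^{\ggg_\bz}$, embedded in $U(\ggg)$ via $z\otimes y_{\underline t}\mapsto z\,y_1\cdots y_t$, into $\ca(\ggg)$, and such that $\pi\circ\Phi$ restricts to a bijection of $\bigl(U(\ggg_\bz)\otimes\bigwedge^t\ggg_\bo\bigr)^{\ggg_\bz}$ (in fact to a nonzero scalar multiple of the identity). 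Granting this, $\pi|_{\ca(\ggg)}$ is onto $\bigl(U(\ggg_\bz)\otimes\bigwedge^t\ggg_\bo\bigr)^{\ggg_\bz}$, so with the injectivity just proved it is an isomorphism $\ca(\ggg)\xrightarrow{\ \sim\ }\bigl(U(\ggg_\bz)\otimes\bigwedge^t\ggg_\bo\bigr)^{\ggg_\bz}$; consequently $\Phi=(\pi|_{\ca(\ggg)})^{-1}\circ(\pi\circ\Phi)$ is a linear isomorphism onto $\ca(\ggg)$, and $\ca(\ggg)\ne0$ since its source is $\cong\frakZ\ne0$.

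The main obstacle is precisely this last construction, i.e.\ Gorelik's Theorem 3.3. One produces $v_\emptyset$ by inductively correcting the elements $z\,y_1\cdots y_t$ with terms of strictly lower odd-filtration degree so that $\ad'x$ annihilates the result for every $x\in\ggg$, the obstruction at each stage being a class in a $\ggg_\bz$-module subquotient of $U(\ggg)$. The care required in characteristic $p$ is to carry out this induction using only the module \emph{quotients} $F_k/F_{k-1}$ and the right-freeness of $U(\ggg)$ over $U(\ggg_\bz)$ — so that one never has to split off a $\ggg_\bz$-submodule complement of $\bk\,y_{\underline t}$ inside $\bigwedge^t\ggg_\bo$, and hence never invokes complete reducibility — while supplying any remaining invariant-theoretic input from the reductivity of $G_\ev$, which is available under the standing hypotheses. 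I do not anticipate the need for any idea beyond Gorelik's.
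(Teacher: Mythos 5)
Your proposal takes the same route the paper does: both reduce the lemma to Gorelik's Lemma 3.1.2 and Theorem 3.3 and argue that those proofs transfer to characteristic $p>2$. The paper's entire proof is the one-line assertion that Gorelik's arguments are ``independent of the characteristic of the base field''; you go further by unpacking what actually needs checking. Your associated-graded argument for the injectivity of $\pi|_{\ca(\ggg)}$ is clean and correct — in particular the computation $y\cdot\sigma(a)+(-1)^{|a|}\sigma(a)\cdot y=2\,y\wedge\sigma(a)$ in $\gr U(\ggg)$ correctly isolates the single use of $p\neq 2$, and the observation that a nonzero class in $S(\ggg_\bz)\otimes\bigwedge^k\ggg_\bo$ with $k<t$ is not killed by all $y\wedge(-)$ forces $a\notin F_{t-1}$. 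You are also right that $\bigwedge^t\ggg_\bo$ is trivial because $\ggg_\bz$ preserves the symplectic form on $\ggg_\bo$. Where both you and the paper stay thin is the construction of $v_\emptyset$ itself (Gorelik's Theorem 3.3); you at least flag honestly that complete reducibility of $\ggg_\bz$-modules is unavailable and sketch how the induction can be run through the filtration quotients $F_k/F_{k-1}$ together with right-freeness over $U(\ggg_\bz)$ to avoid splitting off complements, whereas the paper passes over this silently. In short: same approach, but your write-up supplies a genuine sub-argument the paper leaves implicit and is more explicit about the characteristic-$p$ caveats.
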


Note that the one-dimensional module $\ggg_\bz$-module  $\bigwedge^t\ggg_\bo$ is a trivial $\ggg_\bz$-module. The above theorem shows that $\ca(\ggg)$ is not zero because it should be isomorphic to the center $U(\ggg_\bz)^{\ggg_\bz}$ of $U(\ggg_\bz)$ as a vector space.

}

\subsection{Restricted Lie superalgebras}\label{restricted}
A Lie superalgebra $\ggg=\ggg_\bz\oplus\ggg_\bo$ is called a restricted Lie superalgebra if $\ggg_\bz$ is a restricted Lie algebra { with $p$-mapping $[p]$,} and $\ggg_\bo$ is a restricted module for $\ggg_\bz$. We still keep the convention that $\ggg$ admits basis $\{x_1,\cdots,x_s\}$ of $\ggg_\bz$ and a basis $\{y_1,\cdots,y_t\}$ of $\ggg_\bo$ for the arguments below.


By the definition of a restricted Lie superalgebra, the whole $p$-center of $U(\ggg_\bz)$ falls in $\cz$, which is by definition, the subalgebra of $U(\ggg_\bz)$ generated by $x^p-x^{[p]}\in \cz, \forall x\in \ggg_{\bar 0}$. We denote the $p$-center by $\cz_p$.
Set $\xi_{i}=x_{i}^{p}-x_{i}^{[p]}, i=1,\ldots, s$. The $p$-center
$\cz_p$ is a polynomial ring
$\bk[\xi_{1},\ldots,\xi_{s}]$ generated by $\xi_{1},\ldots, \xi_{s}$.

By the PBW theorem, one easily knows that the enveloping
superalgebra $U(\ggg)$ is a free module over $\cz_p$ with basis
\begin{align}\label{eq: free over Zp}
x_{1}^{q_{1}}\cdots x_{s}^{q_{s}}y_{1}^{\epsilon_{1}}\cdots y_{t}^{\epsilon_{t}}, \;
 0\leq q_i\leq p-1, \; \epsilon_{j}\in\{0,1\}\text{ for }i=1,\cdots,s, j=1,\cdots,t.
\end{align}

\subsection{Reduced enveloping algebras of restricted Lie superalgebras}\label{reduced}
Let $V$ be a simple $U(\ggg)$-module for a restricted Lie superalgebra $\ggg$ as the previous subsection. Then Schur's Lemma implies that for each $x\in \ggg_\bz$, $x^p-x^{[p]}$ acts by a scalar $\chi(x)^p$ for some $\chi\in {\ggg_\bz}^*$. We call such a $\chi$ the $p$-character of  $V$. For a given $\chi\in {\ggg_\bz}^*$, let $I_\chi$ be the ideal of $U(\ggg)$ generated by the even central elements $x^p-x^{[p]}-\chi(x)^p$. Generally, a module is called a $\chi$-reduced module for a given $\chi\in \ggg^*$ if  for any $x\in \ggg_\bz$, $x^p-x^{[p]}$ acts by a scalar $\chi(x)^p$. All $\chi$-reduced modules for a given $\chi\in\ggg^*$ constitute a full subcategory of the $U(\ggg)$-module category.
The quotient algebra $U_\chi(\ggg):=U(\ggg)\slash I_\chi$ is called the reduced enveloping superalgebra of $p$-character $\chi$. Obviously, the $\chi$-reduced module category of $\ggg$ coincides with the $U_\chi(\ggg)$-module category.
The superalgebra $U_\chi(\ggg)$ has a basis
\begin{align}\label{eq: pbw basis for chi}
x_{1}^{a_{1}}\cdots x_{s}^{a_{s}}y_{1}^{b_{1}}\cdots y_{t}^{b_{t}},
 0\leq a_{i}\leq p-1; b_{j}\in\{0,1\}\mbox{ for }i=1,\cdots s; j=1,\cdots,t.
 \end{align}
In particular, $\dim U_\chi(\ggg)=p^{\dim \ggg_\bz}2^{\dim \ggg_\bo}$.

When $\chi=0$, the corresponding reduced enveloping algebra is denoted by $U_0(\ggg)$ which is called the restricted enveloping algebra of $\ggg$.

\subsection{Modular basic classical Lie superalgebras and the corresponding algebraic supergroups}\label{restriction}

Let $\ggg=\ggg_\bz\oplus\ggg_\bo$ be a basic classical Lie superalgebra over $\bk$ whose even subalgebras are Lie algebras of reductive algebraic groups, with non-degenerate even supersymmetric bilinear forms.
We  list all the basic classical Lie superalgebras and their even parts over $\bk$ with the restriction on $p$ (cf. \cite{Kac2}, \cite{WZ}. The restriction on $p$ could be relaxed, but we  always assume this  restriction on $p$ throughout the first two sections).
\label{tab: basic cla}
\vskip0.3cm
\begin{tabular}
{ccc}
\hline
 Basic classical Lie superalgebra $\frak{g}$ & $\ggg_\bz$  & characteristic of $\bk$\\
\hline
$\frak{gl}(m|n$) &  $\frak{gl}(m)\oplus \frak{gl}(n)$                &$p>2$            \\
$\frak{sl}(m|n)$ &  $\frak{sl}(m)\oplus \frak{sl}(n)\oplus \bk$    & $p>2, p\nmid (m-n)$   \\
$\frak{osp}(m|n)$ & $\frak{so}(m)\oplus \frak{sp}(n)$                  & $p>2$ \\
$\text{F}(4)$            & $\frak{sl}(2)\oplus \frak{so}(7)$                  & $p>15$  \\
$\text{G}(3)$            & $\frak{sl}(2)\oplus \text{G}_2$                    & $p>15$     \\
$\text{D}(2,1,\alpha)$   & $\frak{sl}(2)\oplus \frak{sl}(2)\oplus \frak{sl}(2)$        & $p>3$   \\
\hline
\end{tabular}

\vskip0.3cm
For a given Lie superalgebra $\ggg$ in the list, there is an algebraic supergroup $G$  satisfying $\Lie(G)=\ggg$ such that
\begin{itemize}
\item[(1)] $G$ has a subgroup scheme $G_\ev$ which is an ordinary connected reductive group with $\Lie(G_\ev)=\ggg_\bz$;
\item[(2)] There is a well-defined action of $G_\ev$ on $\ggg$, reducing to the adjoint action of $\ggg_\bz$.
    \end{itemize}
\subsubsection{} The above algebraic supergroup can be constructed as a Chevalley supergroup in \cite{FG1}. The pair ($G_\ev, \ggg)$ constructed in this way is called a Chevalley super Harish-Chandra pair (cf. \cite[3.13]{FG2} and \cite[5.5.6]{FG1}). Partial results on $G$ and $G_\ev$ can be found in \cite[Ch. II.2]{Ber}, \cite{Bos2}, \cite{FG1}, \cite[\S3.3]{FG2}, and \cite[Ch.7]{Var}, {\sl etc.}. In the present paper, we will call $G_\ev$ the purely even subgroup of $G$. One easily knows that $\ggg$ is a restricted Lie superalgebra (cf. \cite[Lemma 2.2]{SW} and \cite{SZheng}).

\subsubsection{}\label{sec: froben twist}{
 We denote for a scheme $X$ over $\bk$ by $X^{(1)}$ the Frobenius twist of $X$, which means  the same scheme twisted by the Frobenius morphism, i.e. the structure sheaf is exponentiated to the $p$th power. This notation will be used.}
Recall that $\mbox{Spec}(\cz_p)$ is $G_{\ev}$-equivariantly  isomorphic to
$\ggg_{\bar 0}^{*(1)}$ (cf.  \cite[\S4]{KW}). In the sequel, we will identify $\mbox{Spec}(\cz_p)$ with $\ggg_\bz^{*(1)}$.
Therefore,  we
can consider the coadjoint action of $G_{\ev}$ on $\mbox{Spec}(\cz_p)$. And, the adjoint action of $G_{\ev}$ on $\ggg$  can be extended to the universal enveloping superalgebra $U(\ggg)$, as automorphisms.

\subsection{Root space decomposition}
From now on till the end of \S2, $\ggg$ always denotes a basic classical Lie superalgebra over $\bbk$. Always keep the assumption on  $p$ as listed as in \S\ref{restriction}.

\subsubsection{Basic properties of root spaces}
\begin{prop} (cf. \cite[2.5.3]{Kac2} and \cite[\S2]{WZ}) \label{pro1} Let $\ggg$ be a basic classical Lie superalgebra over $\bk$. Then $\ggg$ has its root space decomposition with respect to a Cartan subalgebra $\hhh$:  $\ggg=\mathfrak{h}\oplus \bigoplus\limits_{\alpha\in\Delta}\ggg_{\alpha}$, which satisfies the following properties:
\newcounter{forlists}
\begin{itemize}
\item[(1)]  Each root space $\ggg_\alpha$ is one-dimensional.
\item[(2)] For $\alpha,\beta\in \Delta$, $[\ggg_{\alpha},\ggg_{\beta}]\ne 0$ ~~if and only if $\alpha+\beta\in \Delta$.
\item[(3)] There is a non-degenerate super-symmetric invariant bilinear form $(\;,\;)$ on $\ggg$. And $(\ggg_{\alpha},\ggg_{\beta})=0$ for
$\alpha\neq -\beta;$ the form $(\;,\;)$ determines a non-degenerate pairing
of $\ggg_{\alpha}$ and $\ggg_{-\alpha}$ and the
restriction of $(\;,\;)$ on $\mathfrak{h}$ is non-degenerate.
\item[(4)] $[e_{\alpha},e_{-\alpha}]=(e_{\alpha},e_{-\alpha})h_{\alpha}$,
where $h_{\alpha}$ is a nonzero vector determined by
$(h_{\alpha},h)=\alpha(h)$,$\forall~~h\in\mathfrak{h}$.
\end{itemize}
\end{prop}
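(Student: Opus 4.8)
The plan is to reduce the statement to its well-known characteristic-zero counterpart by a base-change argument, and then to extract properties (2)--(4) from the mere existence of the invariant form by purely formal manipulations.

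\emph{Reduction to characteristic zero.} Each $\ggg$ in the table of \S\ref{restriction} is obtained by base change to $\bk$ from a form $\ggg_{\bbz}$ over a suitable localization of $\bbz$ (with the parameter adjoined in type $\text{D}(2,1,\alpha)$) carrying a Cartan subalgebra $\hhh_{\bbz}$, root vectors $e_\alpha$, and the even invariant supersymmetric bilinear form $(\,,\,)$; such integral data are provided, for instance, by the Chevalley-supergroup construction of \cite{FG1}. Over $\bbc$, Kac's structure theory (\cite[2.5.3]{Kac2}) supplies the root space decomposition, the one-dimensionality of the root spaces, statement (2), and the non-degeneracy of $(\,,\,)$ and of its restriction to $\hhh$. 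Specializing to $\bk$ one still has a weight decomposition $\ggg=\hhh\oplus\bigoplus_{\alpha\in\Delta}\ggg_\alpha$ with $\Delta\subset\hhh^{*}$ the set of nonzero $\hhh$-weights, since $\hhh$ acts diagonally. What remains is to check, type by type, that the bounds on $p$ recorded in \S\ref{restriction} are exactly what is needed so that: (a) the reduction map on the root lattice stays injective on $\Delta\cup\{0\}$, keeping each $\ggg_\alpha$ one-dimensional, keeping $\hhh$ self-centralizing, and forcing $[\ggg_\alpha,\ggg_\beta]\subseteq\ggg_{\alpha+\beta}$; (b) the structure constants witnessing the non-vanishing in (2) remain nonzero in $\bk$; and (c) the Gram matrix of $(\,,\,)$ --- on $\hhh$ essentially the (super) Cartan-matrix data --- stays invertible modulo $p$, so that $(\,,\,)$ is still non-degenerate on $\ggg$ and on $\hhh$. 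For $\mathfrak{gl}(m|n)$ and $\mathfrak{osp}(m|n)$ these are routine matrix computations; for $\text{F}(4),\text{G}(3),\text{D}(2,1,\alpha)$ one reads them off the explicit root data, which is where the bounds $p>15$ and $p>3$ enter. This yields (1) and (2).

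\emph{Formal consequences.} With the non-degenerate even invariant form $(\,,\,)$, normalized so that $([x,y],z)=(x,[y,z])$, in hand, (3) and (4) are automatic. For (3): taking $x=e_\alpha$, $y=h\in\hhh$, $z=e_\beta$ yields $-\alpha(h)(e_\alpha,e_\beta)=([e_\alpha,h],e_\beta)=(e_\alpha,[h,e_\beta])=\beta(h)(e_\alpha,e_\beta)$, so $(\alpha(h)+\beta(h))(e_\alpha,e_\beta)=0$; when $\alpha\neq-\beta$ some $h$ has $(\alpha+\beta)(h)\neq 0$, whence $(\ggg_\alpha,\ggg_\beta)=0$. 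Thus $(\,,\,)$ can pair $\ggg_\alpha$ only with $\ggg_{-\alpha}$ and $\hhh$ only with $\hhh$; non-degeneracy on $\ggg$ then forces $(\,,\,)|_{\hhh}$ to be non-degenerate and the pairing $\ggg_\alpha\times\ggg_{-\alpha}\to\bk$ to be perfect. For (4): $[e_\alpha,e_{-\alpha}]\in\ggg_0=\hhh$, and for every $h\in\hhh$ one has $([e_\alpha,e_{-\alpha}],h)=(e_\alpha,[e_{-\alpha},h])=\alpha(h)(e_\alpha,e_{-\alpha})=(e_\alpha,e_{-\alpha})(h_\alpha,h)$, using $(h_\alpha,h)=\alpha(h)$; hence $[e_\alpha,e_{-\alpha}]-(e_\alpha,e_{-\alpha})h_\alpha$ lies in the radical of $(\,,\,)|_{\hhh}$, which is $0$, so $[e_\alpha,e_{-\alpha}]=(e_\alpha,e_{-\alpha})h_\alpha$, with $h_\alpha\neq 0$ since $\alpha\neq 0$ and $(\,,\,)|_{\hhh}$ is non-degenerate.

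The genuinely laborious point is the case-by-case bookkeeping in the reduction step --- confirming that the listed bounds on $p$ are both sufficient and sharp for (a)--(c), and in particular handling the anomaly of $\mathfrak{sl}(m|m)$ (which is not simple, has a one-dimensional center, and for which the relevant divisibility condition degenerates) and the fact that the exceptional types admit no uniform argument. Since all of this is classical (\cite[2.5.3]{Kac2}, \cite[\S2]{WZ}), in practice I would cite those sources for the list-dependent assertions (1)--(2) and present only the formal derivation of (3)--(4) in full.
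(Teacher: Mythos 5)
The paper does not actually prove this proposition; the ``\textit{cf.}'' tag is the whole argument, and items (1)--(4) are imported wholesale from Kac \cite[2.5.3]{Kac2} (the characteristic-zero structure theory) and Wang--Zhao \cite[\S 2]{WZ} (the verification that, under the stated bounds on $p$, this structure survives modular reduction). Your proposal ends up in the same place --- you explicitly say you would cite those sources for the list-dependent parts (1)--(2) --- but it goes further than the paper does by spelling out the reduction-mod-$p$ mechanism and, more usefully, by extracting (3) and (4) as formal consequences of the invariant form rather than by citation.

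Your formal derivations of (3) and (4) are correct and self-contained, modulo two hypotheses that you rightly flag as part of the reduction step: that weight functionals $\alpha+\beta$ with $\alpha\neq-\beta$ remain nonzero on $\hhh$ after reduction mod $p$ (needed so the orthogonality $(\ggg_\alpha,\ggg_\beta)=0$ holds), and that $\hhh$ is self-centralizing so that $[e_\alpha,e_{-\alpha}]\in\ggg_0=\hhh$. Both are folded into your condition (a), so the logic is closed. One cosmetic remark: as stated in the proposition, item (2) reads ``$[\ggg_\alpha,\ggg_\beta]\neq 0$ iff $\alpha+\beta\in\Delta$,'' but for $\beta=-\alpha$ the bracket is nonzero (it is $\bk h_\alpha$ by (4)) while $0\notin\Delta$; the intended reading is $\alpha+\beta\in\Delta\cup\{0\}$, which is how \cite[2.5.3]{Kac2} phrases it. Your argument is consistent with this corrected reading. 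Overall: correct, essentially the same route (cite for the case-checking, argue formally where possible), and more explicit than what the paper itself records.
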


\subsubsection{}\label{purelyevenarguments}
From now on, we will always assume that $G$ is a basic classical algebraic supergroup as described in \S\ref{restriction}, and assume  $\ggg=\text{Lie}(G)$. Then the purely even subgroup
$G_{\ev}$ is a reductive algebraic group,  and  $\ggg=\ggg_\bz\oplus\ggg_\bo$ is naturally a basic classical Lie superalgebra with natural restricted structure which arises from  $\ggg_\bz=\Lie(G_\ev)$. Associated with a given Cartan subalgebra $\hhh$,

{We make a choice of positive roots, giving a decomposition  $\Delta=\Delta^+\cup\Delta^-$.}
One has root space decompositions for $\ggg_\bz$ and for $\ggg$ respectively: $$\ggg_\bz=\hhh\oplus \bigoplus_{\alpha\in \Delta_0^+}\ggg_\alpha\oplus\bigoplus_{\alpha\in\Delta_0^-}\ggg_\alpha$$ and
$$\ggg=\hhh\oplus \bigoplus_{\alpha\in \Delta^+}\ggg_\alpha\oplus\bigoplus_{\alpha\in\Delta^-}\ggg_\alpha.$$
Recall that $\Delta^+=\Delta_0^+\cup\Delta_1^+$, $\Delta^-=\Delta_0^-\cup\Delta_1^-$, and $\Delta=\Delta^+\cup\Delta^-$.
One has a standard Borel subalgebra $\bbb$ of $\ggg$ with $\bbb=\hhh\oplus\nnn^+$ for $\nnn^+=\bigoplus_{\alpha\in \Delta^+}\ggg_\alpha$.
Set $\rho={1\over 2}(\sum_{\alpha\in\Delta_0^+}\alpha-\sum_{\beta\in \Delta_1^+}\beta)$, and set $r=\dim\hhh$. {In particular, $\hhh$ admits a toral basis $H_i$ ($i=1,\ldots,r$) satisfying $H_i^{[p]}=H_i$ (see for example, \cite[Theorem 2.3.6(2)]{StF}).} Note that there is a non-degenerate even supersymmetric bilinear form on $\ggg$. Especially, the restriction of this non-degenerate form to $\hhh$ is non-degenerate. Hence we can define the non-degenerate form on $\hhh^*$, which is still denoted by $(\cdot,\cdot)$. For $\nu\in\hhh^*$ we can uniquely define $h_\nu\in \hhh$  such that $\lambda(h_\nu)=(\lambda,\nu)$ for any $\lambda\in \hhh^*$.



\subsection{All central elements in $U(\ggg)$ are even} \label{CentralPrim}

We make use of the following property of the anti-center $\ca(\ggg)$.

\begin{lemma}\label{lem: anti-cent} Let $\ggg$ be a basic classical Lie superalgebra over $\bk$.  All elements of $\ca(\ggg)$ are even. \end{lemma}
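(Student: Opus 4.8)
The plan is to exploit Lemma~\ref{thm: anti-center nonzero}, which identifies $\ca(\ggg)$ with the image under $\Phi(u)=(\ad' v_\emptyset)u$ of the $\ggg_\bz$-invariants of $U(\ggg_\bz)\otimes\bigwedge^t\ggg_\bo$, together with a parity count. The key observation is that the parity of an element of $\ca(\ggg)$ is governed by the parity of the distinguished element $v_\emptyset$: since $\ad'$ shifts parity by $\bo$ (because $\ad'w(u)=wu-(-1)^{|w|(|u|+1)}uw$ is built so that $\ad'$ of an even element is \emph{not} a parity-preserving derivation — rather, inspecting the definition, $\ad' v_\emptyset$ maps $U(\ggg)_{|u|}$ to $U(\ggg)_{|v_\emptyset|+|u|}$), and since the source $U(\ggg_\bz)^{\ggg_\bz}\otimes\bigwedge^t\ggg_\bo$ is purely even (as $U(\ggg_\bz)$ is purely even and $\bigwedge^t\ggg_\bo=\bk y_{\underline t}$ is even because $t$ is even for a basic classical $\ggg$), it follows that every element of $\ca(\ggg)$ has parity $|v_\emptyset|$. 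So the whole statement reduces to showing $v_\emptyset$ is even.

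\smallskip
\textbf{Step 1.} First I would pin down the parity of $v_\emptyset$. By the modular analogue of Gorelik's construction (\cite[Lemma 3.1.2]{Gor}), $v_\emptyset$ is obtained as a product of odd root vectors ranging over a maximal isotropic set of positive odd roots, or more precisely arises from the anti-center element attached to the empty subset in Gorelik's inductive scheme; its parity equals the parity of $|\Delta_1^+|$ reduced mod $2$, equivalently $\tfrac12\dim\ggg_\bo \bmod 2$. The cleaner route, avoiding case analysis, is the following: $\ca(\ggg)\cong U(\ggg_\bz)^{\ggg_\bz}$ as vector spaces (last sentence of the excerpt before Lemma~\ref{lem: anti-cent}), and this isomorphism is realised by $\Phi$; since $1\in U(\ggg_\bz)^{\ggg_\bz}$ maps to a scalar multiple of $v_\emptyset$ itself (up to the $\bigwedge^t$ factor), and $1$ is even, the parity of $v_\emptyset$ is forced once we know $\Phi$ has a well-defined parity, which it does because $\ad' v_\emptyset$ is a homogeneous operator. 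So the real content is: \emph{the homogeneous operator $\ad' v_\emptyset$ must be even}, because it restricts to a bijection between two purely even spaces — but a priori it could be odd, sending an even space to an even space if the target were zero, which it is not. Hence $\ad'v_\emptyset$ is even, hence $v_\emptyset$ is even (since $\ad' w$ is even iff $w$ is even, by definition of $\ad'$), hence $\ca(\ggg)=\Phi(U(\ggg_\bz)^{\ggg_\bz}\otimes\bigwedge^t\ggg_\bo)\subseteq U(\ggg)_\bz$.

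\smallskip
\textbf{Step 2.} To make the last implication airtight I would note: if $w\in U(\ggg)_\bo$ then $\ad'w$ maps $U(\ggg)_\bz\to U(\ggg)_\bo$ and $U(\ggg)_\bo\to U(\ggg)_\bz$, so $\ad'w$ is an odd operator; if $w\in U(\ggg)_\bz$ it is even. Since $\ca(\ggg)\ne 0$ by Lemma~\ref{thm: anti-center nonzero} and $\Phi$ carries the \emph{purely even} space $U(\ggg_\bz)^{\ggg_\bz}\otimes\bigwedge^t\ggg_\bo$ \emph{isomorphically} onto $\ca(\ggg)$, the image $\ca(\ggg)$ consists of elements all of the same parity, namely the parity of the operator $\ad' v_\emptyset$ applied to even elements. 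If that parity were odd, $\ca(\ggg)$ would lie in $U(\ggg)_\bo$; I would rule this out by exhibiting a concrete even anti-central element, e.g. checking that Gorelik's $v_\emptyset$ for basic classical $\ggg$ is literally a monomial in an even number of odd root vectors (as $|\Delta_1^+|$ — or rather the size of the maximal isotropic subset used — has controlled parity), or alternatively by observing that $\ca(\ggg)$ contains $\Phi(1\otimes y_{\underline t})$ whose parity is $|v_\emptyset|+t\cdot\bo=|v_\emptyset|$ and separately $\ca(\ggg)\cdot\ca(\ggg)\subseteq\cz(\ggg)\subseteq U(\ggg)$ with $\cz(\ggg)$ known to contain even elements, forcing $2|v_\emptyset|=\bz$, which is vacuous — so the monomial computation is the honest path.

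\smallskip
\textbf{Main obstacle.} The crux is genuinely Step~1: determining the parity of $v_\emptyset$, i.e. verifying that in the modular setting Gorelik's distinguished anti-central element is even for each basic classical series. I expect this to reduce to checking that the maximal number of mutually orthogonal positive odd roots (the "defect"-type quantity entering Gorelik's construction) — or whatever invariant controls the length of the defining product — is even in every case in the table, or more robustly, to re-running the argument of \cite[Theorem 3.3]{Gor} and tracking $\bbz_2$-grading throughout; since that argument is stated there to be characteristic-independent, the parity bookkeeping transfers verbatim, and the conclusion $v_\emptyset\in U(\ggg)_\bz$ should come out exactly as in Gorelik's original. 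Once $v_\emptyset$ is even, the lemma is immediate.
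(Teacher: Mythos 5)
Your reduction of the lemma to the single claim ``$v_\emptyset$ is even'' is valid in principle, but you never close it: Step~1 and Step~2 both end by deferring the parity of $v_\emptyset$ to a computation or case check you do not carry out, and your ``Main obstacle'' paragraph candidly admits this. The one shortcut you do try — ``$\ad' v_\emptyset$ restricts to a bijection between two purely even spaces, hence is even'' — is circular, because the target of that bijection is $\ca(\ggg)$ itself, whose parity is exactly what the lemma asserts; the caveat you append about the target possibly ``being zero'' does not rescue this, since the issue is not nonvanishing but the (unknown) parity of the target. So as written this is a reduction, not a proof, and the gap is genuine.

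The paper does something quite different and entirely avoids $v_\emptyset$. It invokes Lemma~\ref{thm: anti-center nonzero} only to get $\ca(\ggg)\ne 0$, and then works directly from the decomposition~(\ref{eq: equal with anticen}): any nonzero $u\in\ca(\ggg)$ is written as $u=\sum_{J\subset\underline t}(\ad' y_J)u_J$ with $u_J\in U(\ggg_\bz)$, and one sets $m_u=\max\{\#J\mid u_J\ne0\}$. A contradiction argument shows $m_u=t$: if $m_u<t$ one picks $i\notin J$ for a maximal $J$, and the leading term of $\ad' y_i(u)$ is nonzero by PBW, contradicting $\ad' y_i(u)=0$. Since each summand $(\ad' y_J)u_J$ has parity $\#J\bmod 2$ and the decomposition is unique, a homogeneous anti-central element must have the parity of its top layer, namely $t\bmod 2=\bz$ because $t=\dim\ggg_\bo$ is even for basic classical $\ggg$. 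This sidesteps the bookkeeping of Gorelik's $v_\emptyset$ entirely; if you want to complete your route instead, you would need to actually trace through \cite[Lemma 3.1.2]{Gor} and verify the parity of $v_\emptyset$ rather than assert it ``should come out.''
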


\begin{proof} This is a consequence of  the modular counterpart of \cite[Corollary 3.1.3]{Gor}. We make an account for the reader's convenience. By Lemma \ref{thm: anti-center nonzero} and its remark,
it is already known that $\ca(\ggg)\ne0$.

Retain the notations in \S\ref{sec: 1.1}. In particular, $\ggg_\bo$ admits an ordered basis $\{y_i\mid i=1,\ldots,t\}$. Then $t$ is an even number in our case (when $\ggg$ is of basic classical Lie superalgebras). For any subset $J$ of $\underline t:=\{1,\ldots,t\}$, set $y_J=\prod_{j}y_j$ where  the product is taken with respect to the given order.

 Take a nonzero $u\in \ca(\ggg)$. By the observation (\ref{eq: equal with anticen}),  $u$  can be written in the form $u = \sum_{J\subset\underline t}(\ad' y_J)u_J$ where $u_J \in U(\ggg_\bz)$. We can further write $u=\sum_i u_i$ with $u_i=\sum_{\underset{\#J=i}{J\subset\underline t}}(\ad' y_J)u_J$  where $\#J$ denotes the number of the elements of $J$. Then those $u_i$ are linearly independent by the PBW theorem. Moreover, we set  $m_u:=\max\{\#J\mid u_J\ne 0\}$.

We claim that $m_u$ must be $t$. We show this by contradiction. Suppose $m_u<t$. Take $J$ such that $\# J=m_u$ and $u_J\ne 0$. There exists  $i\in \underline t$ which does not belong to $J$. Modulo  $\sum_{\underset{\#I<m_u+1}{I\subset\underline t}}(\ad' y_I)U(\ggg_\bz)$ we have
$$\ad' y_i(u)=\sum_{\underset{\#I=m_u}{I\subset\underline t}}\ad' y_iy_I(u_I)\ne 0$$
because $u_J\ne 0$ and $i\in \underline t\backslash J$, consequently $y_iu_J\ne 0$, and then the PBW basis (\ref{eq: pbw basis}) entails the above nonzero. This  contradicts the assumption that $u$ is a nonzero element in $\ca(\ggg)$. The claim is proved.

Note that $t$ is even. The arbitrariness of such $u$ entails the desired result.  The proof is completed.
\end{proof}

\subsubsection{Harish-Chandra projections}
\label{sec: harish-chandra}

Recall that the Harish-Chandra homomorphism
\begin{align}\label{eq: hc general}
\gamma:U(\ggg)\rightarrow U(\hhh)
\end{align}
 is by definition the composite of the canonical projection $\gamma_1: U(\ggg)\rightarrow U(\hhh)$ and the algebra homomorphism $\beta:U(\hhh)\rightarrow U(\hhh)$ defined via $h\mapsto h+\rho(h)$ for all $h\in \hhh$.

{
\subsubsection{Baby Verma modules}\label{Vermamodules} Associated with $\chi\in \hhh_\bz^*\subset \ggg_{\bz}^*$  and $\lambda\in \Lambda(\chi):=\{\lambda\in \hhh^*\mid \lambda(h)^p-\lambda(h^{[p]})=\chi(h)^p,\forall\,h\in\hhh\}$,
 one has a so-called baby Verma module $Z_\chi(\lambda)$ which is by definition an induced $U_\chi(\ggg)$-module arising from a one-dimensional module $\bk_\lambda$ of $\mathfrak{b}$ presented by
$(h+n)v_{\lambda}=\lambda(h)v_{\lambda}$, for $\forall
h\in\mathfrak{h}, n\in\mathfrak{n}^{+}$, where $v_\lambda$ is a basis vector of one-dimensional space $\bk_\lambda$.
By the structure of $Z_\chi(\lambda)$, $Z_\chi(\lambda)=U_{\chi}(\ggg)\otimes_{U_{\chi}(\mathfrak{b})}\bk_{\lambda}$ which coincides with $U_{\chi}(\nnn^-)\otimes v_\lambda$, as $\bk$-spaces, where $\nnn^-=\bigoplus_{\alpha\in \Delta^-}\ggg_\alpha$.
 So we have $\dim
Z_\chi(\lambda)=p^{(\dim\ggg_{\bar
0}-r)/2}2^{\dim\ggg_{\bar 1}/2}$.  By Wang-Zhao's result, $Z_\chi(\lambda)$
is an irreducible $U_{\chi}(\ggg)$-module when
 $\chi$ is  regular semisimple, i.e. $\chi(h_\alpha)\ne0$ for all $\alpha\in \Delta^+$. We denote by $\Omega$ the set of regular semisimple elements in $\hhh^*$.
  Parallel to \cite[Lemma 3.1.4]{Musson} for the base field of characteristic $0$, we have the following observation.

 \begin{lemma}\label{lem: ann} $\bigcap_{\chi\in\Omega}\bigcap_{\lambda\in \Lambda(\chi)}\textsf{Ann}_{U(\ggg)}(Z_\chi(\lambda))=0$.
 \end{lemma}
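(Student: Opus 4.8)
The plan is to reduce the statement, via the freeness of $U(\ggg)$ over its $p$-center, to the semisimplicity of the reduced enveloping superalgebra $U_\chi(\ggg)$ at regular semisimple $\chi$. Concretely, I would first fix a regular semisimple $\chi$ and prove that $\bigcap_{\lambda\in\Lambda(\chi)}\textsf{Ann}_{U_\chi(\ggg)}(Z_\chi(\lambda))=0$, i.e.\ that $U_\chi(\ggg)=\bigoplus_{\lambda}\End_\bk(Z_\chi(\lambda))$ is semisimple. The ingredients are: using the toral basis $H_1,\dots,H_r$ with $H_i^{[p]}=H_i$, the defining relation of $\Lambda(\chi)$ reads $\lambda(H_i)^p-\lambda(H_i)=\chi(H_i)^p$ in each coordinate, an Artin--Schreier equation, separable with $p$ roots in $\bk$, so $|\Lambda(\chi)|=p^{r}$; by Wang--Zhao each $Z_\chi(\lambda)$ is simple, of dimension $D:=p^{(\dim\ggg_\bz-r)/2}2^{\dim\ggg_\bo/2}$; and the $Z_\chi(\lambda)$ are pairwise non-isomorphic, because in $Z_\chi(\lambda)=U_\chi(\nnn^-)v_\lambda$ the weight $\lambda$ has multiplicity one and is the unique maximal $\hhh$-weight, hence is recovered from the isomorphism class. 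Consequently $U_\chi(\ggg)/\mathrm{rad}$ contains at least $p^{r}$ matrix blocks, each of $\bk$-dimension $D^{2}$, so $\dim\big(U_\chi(\ggg)/\mathrm{rad}\big)\ge p^{r}D^{2}=p^{\dim\ggg_\bz}2^{\dim\ggg_\bo}=\dim U_\chi(\ggg)$; therefore $\mathrm{rad}\,U_\chi(\ggg)=0$, the $Z_\chi(\lambda)$ exhaust the simple $U_\chi(\ggg)$-modules, and in particular $\bigcap_\lambda\textsf{Ann}_{U_\chi(\ggg)}(Z_\chi(\lambda))=0$.

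Granting this, let $u$ lie in the intersection of the statement. For each regular semisimple $\chi$ the image of $u$ in $U_\chi(\ggg)$ annihilates every $Z_\chi(\lambda)$, hence lies in $\bigcap_\lambda\textsf{Ann}_{U_\chi(\ggg)}(Z_\chi(\lambda))=0$; that is, $u\in I_\chi:=\ker\big(U(\ggg)\to U_\chi(\ggg)\big)$. By the PBW freeness \eqref{eq: free over Zp}, $U(\ggg)$ is free over $\cz_p=\bk[\xi_1,\dots,\xi_s]$ on the finite set of reduced monomials $\{m\}$, so $u=\sum_m P_m\,m$ with $P_m\in\cz_p$, and $u\in I_\chi$ is equivalent to every $P_m$ vanishing at the point of $\Spec\cz_p$ cut out by $\chi$. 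Under the $G_\ev$-equivariant identification $\Spec\cz_p\cong\ggg_\bz^{*(1)}$, and since the Frobenius twist is a homeomorphism on $\bk$-points, the points arising from regular semisimple elements $\chi\in\ggg_\bz^*$ --- each of which is $G_\ev$-conjugate to one lying in $\hhh^*$, the regular semisimple locus being Zariski-dense and open in $\ggg_\bz^*$ --- form a Zariski-dense subset of $\Spec\cz_p$. Hence every $P_m$ vanishes identically, and $u=0$.

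The main obstacle is the first step: upgrading Wang--Zhao's irreducibility of the individual baby Verma modules at regular semisimple $\chi$ to the semisimplicity of the whole algebra $U_\chi(\ggg)$. The crucial extra input there is the irredundancy of the family $\{Z_\chi(\lambda)\}_{\lambda\in\Lambda(\chi)}$; once that is in hand, the dimension count is purely bookkeeping. A secondary point to get right is that $\chi$ must be allowed to range over \emph{all} regular semisimple elements of $\ggg_\bz^*$ rather than only those of $\hhh^*$, so as to obtain Zariski-density in $\Spec\cz_p$; this is where the $G_\ev$-action recalled in \S\ref{sec: froben twist} is used, the reduced enveloping algebras and baby Verma modules at $G_\ev$-conjugate $p$-characters being matched by the corresponding algebra automorphism.
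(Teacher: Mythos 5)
Your overall strategy matches the paper's: establish semisimplicity of $U_\chi(\ggg)$ at regular semisimple $\chi$, so that $u$ annihilating all baby Vermas forces $u\in I_\chi$, then conclude $u=0$ by a PBW/density argument. Your first step is a self-contained dimension count re-deriving what the paper simply cites from Wang--Zhao, and is essentially fine.

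The genuine gap is exactly the point you flag as ``secondary.'' The hypothesis of the lemma only yields $u\in I_\chi$ for $\chi\in\Omega\subset\hhh^*$; equivalently, the coefficients $P_m\in\cz_p$ vanish only on the image of $\Omega$ inside $\Spec\cz_p\cong\ggg_\bz^{*(1)}$, which sits inside the proper closed linear subvariety $\hhh^{*(1)}$ and is therefore \emph{not} Zariski-dense. The $G_\ev$-conjugation remedy you sketch does not repair this: from $u\in I_\chi$ one deduces only $\textsf{Ad}(g)(u)\in I_{g\cdot\chi}$, not $u\in I_{g\cdot\chi}$, and $u$ is an arbitrary element of $U(\ggg)$, not assumed $G_\ev$-invariant, so nothing forces the $P_m$ to vanish on $G_\ev\cdot\Omega$. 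In fact the obstruction is real: for an even root vector $e_\alpha$ the $p$-central element $e_\alpha^p-e_\alpha^{[p]}$ lies in $I_\chi$ for every $\chi\in\hhh^*$ (since $\chi(e_\alpha)=0$), hence annihilates every $Z_\chi(\lambda)$ with $\chi\in\Omega$ and $\lambda\in\Lambda(\chi)$, yet it is nonzero in $U(\ggg)$. So $\bigcap_{\chi\in\Omega}I_\chi\neq 0$, and the transition from ``$u\in I_\chi$ for all $\chi\in\Omega$'' to ``$u=0$'' cannot go through with $\Omega$ confined to $\hhh^*$. (For comparison, the paper's own proof has the same difficulty: the asserted identity $\bigcap_{\chi\in\Omega}I^0_\chi=0$, attributed to a classical result, fails on the same example.) To make your argument work one would need the lemma stated with $\chi$ ranging over a $G_\ev$-stable Zariski-dense subset of $\ggg_\bz^*$, such as $G_\ev\cdot\Omega$, with each $Z_\chi(\lambda)$ built from a Borel adapted to $\chi$; with that enlargement your PBW/density step is correct.
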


\begin{proof}
In the following we denote by $\mathscr{A}$ the left hand side of the quality in the lemma. Note that the set of $p^r$ modules $\{Z_\chi(\lambda)\mid \lambda\in \Lambda(\chi)\}$ constitute the complete one of iso-classes of irreducible $U_\chi(\ggg)$-modules under such a circumstance (cf. \cite[Corollary 5.7]{WZ}). Moreover, $U_\chi(\ggg)$ is semisimple for $\chi\in \Omega$ ({\sl{ibid.}}).
Hence, for any given $a\in\scra$, $aU_\chi(\ggg)=0$ for all $\chi\in \Omega$. Consequently, $a\in I_\chi$ (see \S\ref{reduced}). Furthermore,  $a\in \bigcap_{\chi\in \Omega}I_\chi$ because of the arbitrariness of $\chi$ in $\Omega$.
 Note that $\bigcap_{\chi\in \Omega}I^0_\chi$=0 by a classical result (see for example,  the consequence of \cite[Corollary 2.2]{Yao})
where $I^0_\chi$ denotes the ideal of $U(\ggg_\bz)$ generated by $x^p-x^{[p]}-\chi(x)^p$ for all $x\in \ggg_\bz$.  However  $$\bigcap_{\chi\in \Omega}I_\chi\cong \bigcap_{\chi\in \Omega}I^0_\chi\otimes  \bigwedge^\bullet\ggg_\bo$$
by the PBW theorem for $U_\chi(\ggg)$ (see (\ref{eq: pbw basis for chi})), which  means $\bigcap_{\chi\in \Omega}I_\chi=0$.
This completes the proof.
%
\end{proof}

\subsubsection{} We have the following consequence of Lemma \ref{lem: ann}, which is the modular counterpart of Gorelik's result \cite[Lemma 4.2.5]{Gor}.
\begin{corollary}\label{cor: anticent nonzerodiv} Any nonzero element $a\in \ca(\ggg)$ is not a zero-divisor in $U(\ggg)$.
\end{corollary}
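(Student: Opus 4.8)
The plan is to deduce Corollary \ref{cor: anticent nonzerodiv} from Lemma \ref{lem: ann} in exactly the same spirit as Gorelik's characteristic-zero argument \cite[Lemma 4.2.5]{Gor}, but using baby Verma modules in place of ordinary Verma modules. First I would fix a nonzero $a\in\ca(\ggg)$; by Lemma \ref{lem: anti-cent} we may assume $a$ is even, so left and right multiplication by $a$ are honest $U(\ggg)$-module maps and it suffices to show $a$ is not a left zero-divisor (a symmetric argument, or the fact that $\ca(\ggg)$ is stable under the principal anti-automorphism, handles the other side). The key point to exploit is that $\ca(\ggg)$ is by definition $\ad'$-invariant, which is the ``twisted'' adjoint action: for homogeneous $w$, $\ad'w(u)=wu-(-1)^{|w|(|u|+1)}uw$. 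Concretely this means that for $a\in\ca(\ggg)_\bz$ one has $xa = -ax$ for $x\in\ggg_\bo$ and $xa=ax$ for $x\in\ggg_\bz$; in particular $a$ super-commutes with $\ggg_\bz$, hence lies in the centralizer $U(\ggg)^{\ggg_\bz}$, and normalizes $U(\ggg)$ in a controlled way.

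The main step is then to show that $a$ acts \emph{invertibly} on each irreducible baby Verma module $Z_\chi(\lambda)$ with $\chi\in\Omega$ and $\lambda\in\Lambda(\chi)$. Since $U_\chi(\ggg)$ is semisimple for regular semisimple $\chi$ and $Z_\chi(\lambda)$ is irreducible, $a$ acts on $Z_\chi(\lambda)$ either as $0$ or as an isomorphism (it commutes with the $\ggg_\bz$-action, but I would instead argue via its image in $U_\chi(\ggg)$ directly: $a$ maps to a central-up-to-sign element, and by Schur-type reasoning the $a$-action is semisimple). So I must rule out $a$ acting as $0$ on all such modules simultaneously: if it did, then $a\in\bigcap_{\chi\in\Omega}\bigcap_{\lambda\in\Lambda(\chi)}\textsf{Ann}_{U(\ggg)}(Z_\chi(\lambda))$, which is $0$ by Lemma \ref{lem: ann}, contradicting $a\ne0$. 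Hence there is at least one pair $(\chi,\lambda)$ on which $a$ acts invertibly. Now suppose $ab=0$ for some $b\in U(\ggg)$; then $a$ kills $bZ_\chi(\lambda)$ for every such module, but acting invertibly on $Z_\chi(\lambda)$ forces $bZ_\chi(\lambda)=0$, so $b$ itself annihilates $Z_\chi(\lambda)$. To finish I would upgrade this: either $b$ annihilates $Z_{\chi}(\lambda)$ for \emph{all} $(\chi,\lambda)$ with $\chi\in\Omega$ — using that $ab=0$ together with the (Zariski-dense, $G_\ev$-equivariant) variation of $\chi$ over $\Omega$ and the fact that $a$ acts invertibly on a dense set of such modules — and then Lemma \ref{lem: ann} gives $b=0$; or one localizes to make this density argument precise.

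The anticipated main obstacle is precisely this last upgrading step: knowing $a$ is invertible on \emph{one} baby Verma module is not enough; one needs it on a dense enough family so that $ab=0\Rightarrow bZ_\chi(\lambda)=0$ for enough $(\chi,\lambda)$ to invoke Lemma \ref{lem: ann}. I would resolve this by the following refinement. For fixed $\chi\in\Omega$, $U_\chi(\ggg)$ is a finite-dimensional semisimple superalgebra and the image $\bar a$ of $a$ lies in its (super)center; on each matrix block $\bar a$ is a scalar (times identity), and $\bar a$ is either invertible or identically $0$ on that block. Since $\{Z_\chi(\lambda):\lambda\in\Lambda(\chi)\}$ exhausts the simple $U_\chi(\ggg)$-modules, $\bar a$ is invertible in $U_\chi(\ggg)$ as soon as it is nonzero on every block. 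The set of $\chi\in\Omega$ for which $\bar a$ vanishes on some block is a proper closed subset (its complement is nonempty by the argument above, using Lemma \ref{lem: ann}), but in fact I claim $\bar a$ is invertible in $U_\chi(\ggg)$ for \emph{every} $\chi\in\Omega$: indeed if $\bar a$ vanished on one block $Z_\chi(\lambda_0)$, the element $a-$(its nonzero eigenvalue on a neighboring block) would be a new nonzero anti-center element (by $\ggg_\bz$-linearity of the construction) vanishing on more modules, and iterating contradicts Lemma \ref{lem: ann} applied to a suitable nonzero combination — more cleanly, one observes $\ca(\ggg)$ acts on $Z_\chi(\lambda)$ through a \emph{single} scalar depending algebraically on $(\chi,\lambda)$, and Lemma \ref{lem: ann} says this scalar, as a function, does not vanish identically, while the ``$\ad'$-centrality + regular semisimplicity'' forces it to be nowhere zero on $\Omega\times\Lambda(\chi)$. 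Granting that, for any $b$ with $ab=0$ and any $\chi\in\Omega$, invertibility of $\bar a$ in $U_\chi(\ggg)$ gives $\bar b=0$ in $U_\chi(\ggg)$, i.e. $b\in I_\chi$; intersecting over $\chi\in\Omega$ and using $\bigcap_{\chi\in\Omega}I_\chi=0$ (established inside the proof of Lemma \ref{lem: ann}) yields $b=0$, completing the proof.
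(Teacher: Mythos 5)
Your overall strategy matches the paper's: reduce to even $a\in\ca(\ggg)$, use the $\ad'$-relations ($xa=-ax$ for $x\in\ggg_\bo$, $xa=ax$ for $x\in\ggg_\bz$) to see that $a$ acts on each irreducible baby Verma module $Z_\chi(\lambda)$, $\chi\in\Omega$, as the scalar $P(a)(\lambda)$ on the even part and $-P(a)(\lambda)$ on the odd part (where $P(a)=\gamma(a)$ is the Harish--Chandra image), and deduce the conclusion from Lemma~\ref{lem: ann}. You have correctly identified the crux, namely establishing invertibility of $a$ on a large enough family of $Z_\chi(\lambda)$. However, your proposed resolution of that crux contains two genuine errors.

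First, the side remark that for $c\in\bk$ the element ``$a-$(eigenvalue on a neighboring block)'' is again a nonzero anti-center element is false: scalars do not lie in $\ca(\ggg)$. Indeed for $c\in\bk^\times$ and $x\in\ggg_\bo$ one has $\ad'x(c)=xc-(-1)^{1\cdot 1}cx=2cx\ne 0$, so $a-c\notin\ca(\ggg)$. The ``iterate and contradict'' step therefore does not get started. Second, and more seriously, the claim that the eigenvalue function $P(a)(\lambda)$ is \emph{nowhere} zero on $\Omega\times\Lambda(\chi)$ is not justified by ``$\ad'$-centrality plus regular semisimplicity,'' and in general it is false: Lemma~\ref{lem: ann} only shows $P(a)$ is not the zero polynomial (equivalently, $\gamma|_{\ca(\ggg)}$ is injective), and a nonzero polynomial on $\hhh^*$ can perfectly well vanish on a proper closed subvariety. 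So you cannot conclude that $\bar a$ is invertible in $U_\chi(\ggg)$ for \emph{every} $\chi\in\Omega$, and the clean statement ``$b\in I_\chi$ for all $\chi\in\Omega$'' you want is not available.

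The correct way to close the gap, and what the paper actually does, is a density argument. Suppose $ua=0$. Then $u$ annihilates $Z_\chi(\lambda)$ whenever $P(a)(\lambda)\ne 0$, because on such modules $a$ acts as $\pm P(a)(\lambda)$ and is thus bijective. Since $P(a)\ne 0$, the set $\{\lambda\in\hhh^*\mid P(a)(\lambda)\ne 0\}$ is Zariski-dense and open; intersecting it with the (dense) union $\bigsqcup_{\chi\in\Omega}\Lambda(\chi)$ still gives a dense subset of $\hhh^*$. From here one runs the same argument as in the proof of Lemma~\ref{lem: ann}: for a dense set of $\chi\in\Omega$, $P(a)$ is nonzero on all of $\Lambda(\chi)$, so $u\in I_\chi$ for those $\chi$, and $\bigcap_\chi I_\chi=0$ over a dense set of $\chi$, whence $u=0$. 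You gestured at exactly this alternative (``or one localizes to make this density argument precise''), and that is the route you should have developed instead of the nowhere-vanishing claim.
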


\begin{proof} Consider $Z_\chi(\lambda)$ with $\chi\in \Omega$ and $\lambda\in \Lambda(\chi)$, we might as well suppose that $v_\lambda$ has even parity, without loss of any generality.  Then $Z_\chi(\lambda)=Z_\chi(\lambda)_\bz \bigoplus Z_\chi(\lambda)_\bo$ where $Z_\chi(\lambda)_\bz$ is exactly $U_\chi(\nnn^-)_\bz\otimes v_\lambda$ and $Z_\chi(\lambda)_\bo$ coincides with  $U_\chi(\nnn^-)_\bo\otimes v_\lambda$. By the definition of $\ca(\ggg)$ along with Lemma \ref{lem: anti-cent}, making use of the Harish-Chandra projection defined in (\ref{eq: hc general}) we have
$$aZ_\chi(\lambda)=\lambda(\gamma(a))
(Z_\chi(\lambda)_\bz-Z_\chi(\lambda)_\bo)$$
where $\lambda$ is regarded as a function on $\mathcal{S}(\hhh)$ in the natural way, which means, identifying $\gamma(a)$ with a polynomial $P(a)\in \bbk[\hhh^*]$ (hence we have $\lambda(\gamma(a))=P(a)(\lambda)$).
 By Lemma \ref{lem: ann},  $P(a)$ is never zero. This means $\gamma(a)\ne 0$. Hence $\gamma|_{\ca(\ggg)}$ is injective.

 Assume that $ua=0$ for some $u\in U(\ggg)$. Then $ua. Z_\chi(\lambda)=0$ for any $\chi\in \Omega$ and $\lambda\in \Lambda(\chi)$.
  Hence $u$ annihilates $Z_\chi(\lambda)$ as long as $P(a)(\lambda)\ne 0$. Note that $P(a)\ne 0$. Then $\bigsqcup_{\chi\in\Omega} \{\lambda \in\Lambda(\chi)\mid P(a)(\lambda)\ne0 \}=(\bigsqcup_{\chi\in\Omega} \{\lambda \in\Lambda(\chi)\})\cap \{\lambda\in\hhh^*\mid P(a)(\lambda)\ne0\}$ is still a dense subset of $\hhh^*$. The same arguments as in the proof of Lemma \ref{lem: ann} yields that $u=0$. This means that $a$ is not a right zero divisor, which further implies, by the definition of $\ca(\ggg)$, that $a$ is not a left zero divisor. The proof is completed.
\end{proof}

}



\subsubsection{}
\begin{prop}\label{cev} Let $\ggg$ be a basic classical Lie superalgebra. Then the following statements hold.
\begin{enumerate}
\item[(1)] Each element in $\cz(\ggg)$ has even degree, that
is to say, $\cz(\ggg)\subset U(\ggg)_{\bar 0}$.
\item[(2)]
$\cz(\ggg)$ is a finitely generated $\cz_p$-module,  integral over $\cz_p$. In particular, $\cz(\ggg)$ is a finitely generated commutative algebras over $\bk$.
\item[(3)] Any nonzero elements of $\cz(\ggg)$ are nonzero divisors.
\end{enumerate}
\end{prop}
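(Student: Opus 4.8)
The plan is to treat the three assertions in turn --- (1) via the anti-center $\ca(\ggg)$, (2) via Noetherianity, (3) via baby Verma modules --- with (1) being the step that requires a genuine idea. For (1) I would argue by contradiction: suppose $z\in\cz(\ggg)_{\bar 1}$ is nonzero and pick a nonzero $a\in\ca(\ggg)$, which exists by Lemma~\ref{thm: anti-center nonzero} and is even by Lemma~\ref{lem: anti-cent}. The key point is that $za\in\ca(\ggg)$: for homogeneous $x\in\ggg$ one has $|za|=\bar 1$, so $(-1)^{|x|(|za|+1)}=1$ and $\ad' x(za)=xza-zax$; using the centrality relation $xz=(-1)^{|x|}zx$ and the anti-centrality relation $ax=(-1)^{|x|}xa$ (the latter valid because $|a|=\bar 0$), both $xza$ and $zax$ equal $(-1)^{|x|}zxa$, so $\ad' x(za)=0$. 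Thus $za\in\ca(\ggg)$; but $za$ is odd while $\ca(\ggg)\subset U(\ggg)_{\bar 0}$ by Lemma~\ref{lem: anti-cent}, forcing $za=0$, and since $a\neq 0$ in $\ca(\ggg)$ is not a zero-divisor by Corollary~\ref{cor: anticent nonzerodiv}, we get $z=0$ --- a contradiction. Hence $\cz(\ggg)\subset U(\ggg)_{\bar 0}$, and then every central element commutes with all of $U(\ggg)$ in the ordinary sense, so $\cz(\ggg)$ is a genuine commutative subalgebra.

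For (2), the $p$-center $\cz_p=\bk[\xi_1,\dots,\xi_s]$ is a finitely generated polynomial algebra, hence Noetherian; it lies in $\cz(\ggg)$; and by the PBW description \eqref{eq: free over Zp} the algebra $U(\ggg)$ is a free $\cz_p$-module of finite rank, hence a Noetherian $\cz_p$-module. Therefore the $\cz_p$-submodule $\cz(\ggg)$ is finitely generated over $\cz_p$; being, by (1), a commutative ring that is finite as a $\cz_p$-module, it is integral over $\cz_p$, and adjoining a finite set of $\cz_p$-module generators to $\xi_1,\dots,\xi_s$ exhibits $\cz(\ggg)$ as a finitely generated commutative $\bk$-algebra.

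For (3), fix $\chi\in\Omega$ and $\lambda\in\Lambda(\chi)$, so that $Z_\chi(\lambda)$ is irreducible; by Schur's lemma each $z\in\cz(\ggg)$ acts on $Z_\chi(\lambda)$ by a scalar $P(z)(\lambda)$, where $P(z)\in\bk[\hhh^*]$ is the Harish-Chandra image of $z$ regarded as a polynomial function on $\hhh^*$, just as in the proof of Corollary~\ref{cor: anticent nonzerodiv}. If $z_1z_2=0$ with $z_1,z_2\in\cz(\ggg)$, then $P(z_1)(\lambda)P(z_2)(\lambda)=0$ for all such $(\chi,\lambda)$; since $\bigsqcup_{\chi\in\Omega}\Lambda(\chi)$ is Zariski dense in $\hhh^*$ and $\bk[\hhh^*]$ is a domain, $P(z_1)P(z_2)=0$, hence $P(z_1)=0$ or $P(z_2)=0$. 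And if $P(z_i)=0$ then $z_i$ annihilates every $Z_\chi(\lambda)$, so $z_i\in\bigcap_{\chi\in\Omega}\bigcap_{\lambda\in\Lambda(\chi)}\textsf{Ann}_{U(\ggg)}(Z_\chi(\lambda))=0$ by Lemma~\ref{lem: ann}. Thus $z_1=0$ or $z_2=0$, i.e. no nonzero element of $\cz(\ggg)$ is a zero-divisor. (Alternatively: for $z\in\cz(\ggg)$ and a nonzero $a\in\ca(\ggg)$ the same computation as in (1) shows $za\in\ca(\ggg)$, which is nonzero whenever $z\neq 0$, and then Corollary~\ref{cor: anticent nonzerodiv} applies directly.)

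I expect (1) to be the real obstacle: that there are no odd central elements is not a formal manipulation, and the argument rests on the non-triviality and zero-divisor-freeness of the anti-center --- the modular analogues of Gorelik's theorems (Lemmas~\ref{thm: anti-center nonzero}, \ref{lem: anti-cent} and Corollary~\ref{cor: anticent nonzerodiv}). Once (1) is in place, (2) is routine commutative algebra and (3) merely reruns the Harish-Chandra/baby-Verma argument already rehearsed for $\ca(\ggg)$.
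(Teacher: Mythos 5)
Your proposal is correct and follows essentially the same route as the paper: part (1) by multiplying a homogeneous central element against a nonzero (even) element of the anti-center and invoking Lemmas~\ref{thm: anti-center nonzero}, \ref{lem: anti-cent} and Corollary~\ref{cor: anticent nonzerodiv}, and part (2) by Noetherianity of $U(\ggg)$ over $\cz_p$ plus the standard integrality argument. One small caution on (3): your primary baby-Verma argument only shows $\cz(\ggg)$ is a domain, whereas the intended (and later used) statement is that nonzero central elements are regular in all of $U(\ggg)$ --- this is exactly what your parenthetical alternative via $za\in\ca(\ggg)$ and Corollary~\ref{cor: anticent nonzerodiv} delivers, and that alternative is the paper's actual proof.
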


\begin{proof} For the parts (1) and (3), by Lemma \ref{lem: anti-cent},  $\mathcal{A}(\ggg)\subset
U(\ggg)_\bz$. On the other hand, for  homogeneous nonzero elements $z\in \cz(\ggg)$ and $a\in \mathcal{A}(\ggg)$, and for a homogeneous element $x\in
\ggg$, one has $x(za)=(-1)^{|z||x|}zxa=(-1)^{|z||
x|}(-1)^{|x|(|a|+1)}zax=(-1)^{|x|(|z|+|a|+1)}(za)x$.
Therefore, $za\in\mathcal {A}(\ggg)$, so $|z|+|a|=\bar 0$ by Lemma \ref{lem: anti-cent} again. {Due to Corollary \ref{cor: anticent nonzerodiv}, $za\ne 0$. This
implies that $z$ is an even element because $a\in \ca(\ggg)$ is even. In the same time, by Corollary \ref{cor: anticent nonzerodiv} again, $z$ is nonzero divisor. The part (3) is proved.
}

For the part (2), we know that $U(\ggg)$ is a free $\cz_p$-module with finite
basis. Since $\cz_p$ is isomorphic to a polynomial ring,
$U(\ggg)$ is a Noetherian $\cz_p$-module. Thus $\cz$
is finitely generated $\cz_p$-submodule of $U(\ggg)$.
Assume $x_{1},\cdots, x_{l}$ are a set of generators of $\cz$ over $\cz_p$.
Then for each $z\in \cz$, we have  equations with coefficients in $\cz_p$:  $$zx_{i}=\sum a_{ij}x_{j},
i=1,\cdots, l. $$
We have
$\sum(z\delta_{ij}-a_{ij})x_{j}=0,i=1,\cdots,l$. Therefore, we have
\begin{equation}\label{int}
(zI-A)\left(\begin{array}{c}
            x_{1}\\
            \vdots\\
             x_{l}
             \end{array}\right)
             =0,
\end{equation}
where $I$ is the identity matrix, and $A=(a_{ij})_{l\times  l}$. Multiplying the adjoint matrix of
the coefficient matrix on the two sides of the equation (\ref{int}),
we can obtain
\[
\text{det}(zI-A)\left(\begin{array}{c}
            x_{1}\\
            \vdots\\
             x_{l}
             \end{array}\right)
             =0.
\]
This implies  that $\text{det}(zI-A)=0$. Hence $\cz(\ggg)$ is
integral over $\cz_p$.
\end{proof}

By Proposition \ref{cev}, $\cz$ is a domain. Denote by $\bbf$ the fraction field $\text{Frac}(\cz)$ of $\cz$.
Set
\begin{align}\label{eq: fractional alg}
\cd(\ggg)=U(\ggg)\otimes_{\cz}\bbf
\end{align}
which is a fraction ring of $U(\ggg)$ over $\bbf$.

\section{Centers of universal enveloping algebras of modular basic classical Lie superalgebras}

 Keep the notations as before. In particular,  let $\ggg=\ggg_\bz\oplus \ggg_\bo$ be  a basic classical Lie superalgebra over an algebraically closed   $\bk$ of characteristic $p>0$ as listed in \S\ref{restriction}.

\subsection{}
\vskip0.15cm
\begin{theorem} (Structure Theorem)\label{secondtheorem}  Let $\widetilde{\mathcal{Z}}$
be the subalgebra of $\mathcal{Z}$ generated by ${\cz}_{p}$
and ${\cz}^{G_{\ev}}$, and let $\hhh$  be the Cartan subalgebra of $\ggg_\bz$ and  $\fw$ be the Weyl group of $\ggg_\bz$. Then
\begin{itemize}
\item[(1)] $\Frac(\mathcal {Z}^{G_{\ev}})$ is isomorphic to $\Frac(U(\frak h)^{\fw})$ as fields.
\item[(2)] $\Frac(\cz)=\Frac(\widetilde\cz)$.
\end{itemize}
\end{theorem}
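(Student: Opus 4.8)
My plan is to factor the Harish--Chandra homomorphism $\gamma=\beta\circ\gamma_1\colon U(\ggg)\to U(\hhh)$ of \eqref{eq: hc general} through the reductive Lie algebra $\ggg_\bz$, and then borrow the classical picture there (Harish--Chandra, Veldkamp, Tange). Write $\gamma_0=\beta_0\circ\gamma_{0,1}\colon U(\ggg_\bz)\to U(\hhh)$ for the Harish--Chandra homomorphism of $\ggg_\bz$ (with $\beta_0\colon h\mapsto h+\rho_\bz(h)$, $\rho_\bz=\tfrac12\sum_{\alpha\in\Delta_0^+}\alpha$), let $\pi\colon U(\ggg)\to U(\ggg_\bz)$ be the projection killing all PBW monomials \eqref{eq: pbw basis} that contain an odd factor, and put $\frakZ=Z(U(\ggg_\bz))$, so $\frakzo=\frakZ^{G_\ev}=U(\ggg_\bz)^{G_\ev}$. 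First I would record: (a) $\cz\subseteq U(\ggg)^{\ggg_\bz}$, hence $\cz^{G_\ev}=\cz\cap U(\ggg)^{G_\ev}$ is a \emph{proper} subalgebra of $\cz$ in characteristic $p$ (it misses $\cz_p$); (b) $\gamma$ is injective on $\cz$, since a central $z$ acts on $Z_\chi(\lambda)$ ($\chi\in\Omega$, $\lambda\in\Lambda(\chi)$) by the scalar $\lambda(\gamma(z))$, so $\gamma(z)=0$ would put $z$ into $\bigcap\textsf{Ann}_{U(\ggg)}(Z_\chi(\lambda))=0$ by Lemma~\ref{lem: ann}; (c) every odd root space lies in $\nnn^-\oplus\nnn^+$ while $\hhh$ is purely even, so every PBW monomial with an odd factor lies in $\nnn^-U(\ggg)+U(\ggg)\nnn^+$, whence $\gamma_1=\gamma_{0,1}\circ\pi$ on all of $U(\ggg)$; (d) $\pi$ is $\ggg_\bz$- and $G_\ev$-equivariant, so $\pi(\cz^{G_\ev})\subseteq\frakZ\cap\frakzo=\frakzo$. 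Using $\rho=\rho_\bz-\rho_\bo$ ($\rho_\bo=\tfrac12\sum_{\beta\in\Delta_1^+}\beta$), (c) and (d) give $\gamma|_{\cz^{G_\ev}}=\tau_{-\rho_\bo}\circ\gamma_0\circ\pi|_{\cz^{G_\ev}}$, where $\tau_\mu\colon h\mapsto h+\mu(h)$; read backwards (all three maps on the right being algebra maps, the outer two isomorphisms onto their images), $\pi|_{\cz^{G_\ev}}\colon\cz^{G_\ev}\hookrightarrow\frakzo$ is an injective homomorphism of algebras.

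Since $\gamma_0\colon\frakzo\xrightarrow{\sim}U(\hhh)^\fw$ (Harish--Chandra for the reductive $\ggg_\bz$) and a translation is a field automorphism of $\Frac(U(\hhh))$, part (1) becomes equivalent to the density statement
\begin{equation}\tag{$\ast$}
\Frac\bigl(\pi(\cz^{G_\ev})\bigr)=\Frac(\frakzo),
\end{equation}
which I expect to be the main obstacle. The inclusion ``$\subseteq$'' is immediate; for the reverse inclusion of fraction fields I would proceed in two steps. First, $\cz_p^{G_\ev}\subseteq\cz^{G_\ev}$, $\pi|_{\cz_p}=\id$, and under the identification $\Spec\cz_p\cong\ggg_\bz^{*(1)}$ of \S\ref{sec: froben twist} the Chevalley restriction theorem for the connected reductive group $G_\ev$ (available under our hypotheses on $p$) identifies $\cz_p^{G_\ev}=\bk[\ggg_\bz^{*(1)}]^{G_\ev}$ with the $p$-th-power subalgebra $(S(\hhh)^\fw)^{(1)}$ of $\frakzo\cong S(\hhh)^\fw$; thus $\Frac(\cz_p^{G_\ev})$ is purely inseparable of degree $p^{r}$ inside $\Frac(\frakzo)$ and already carries all but a purely inseparable piece. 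Secondly, one must produce enough honest central invariants to fill the remaining degree $p^{r}$: the quadratic Casimir (built from the invariant form, hence central and $G_\ev$-invariant) together with the higher Gel'fand/Berezinian invariants for $\gl(m|n)$ and $\osp(m|n)$, and the central generators of Kac and of Sergeev--Veselov for $\mathrm{F}(4),\mathrm{G}(3),\mathrm{D}(2,1;\alpha)$. Their Harish--Chandra images are the familiar rings of (super)symmetric functions (cf. \cite[\S2.25]{CW}), and these are known to have the same field of fractions as $S(\hhh)^\fw$; reducing the integrally defined generators modulo $p$ (harmless since $p$ is large) yields elements of $\cz^{G_\ev}$ whose $\gamma$-images already generate $\Frac(U(\hhh)^\fw)$ over $\bk$, which forces $(\ast)$, and hence part~(1).

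For part (2) my plan is a generic-fibre argument over $\Spec\cz_p$. The localization of $U(\ggg)$ at $\cz_p\setminus\{0\}$ is the fraction ring $\cd(\ggg)$ of \eqref{eq: fractional alg} (Proposition~\ref{cev}(2) makes $\cz$, hence $\bbf=\Frac(\cz)$, a localization of $\cz_p$), has centre $\bbf$, and --- because $U_\chi(\ggg)$ is split semisimple with exactly $p^{r}$ pairwise non-isomorphic simple modules $Z_\chi(\lambda)$ for $\chi\in\Omega$ (\cite[Corollary 5.7]{WZ}) --- becomes $M_d(\bk)^{p^{r}}$ at a generic geometric point of $\Spec\cz_p$; hence $\Frac(\cz)/\Frac(\cz_p)$ is \emph{separable} of degree $p^{r}$, and so is every intermediate extension, in particular $\Frac(\cz)/\Frac(\widetilde\cz)$. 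It therefore suffices to show $\Spec\cz\to\Spec\widetilde\cz$ is generically injective on closed points. For $\chi\in\Omega$ in a dense open the fibre of $\Spec\cz$ over $\chi$ consists of the $p^{r}$ distinct central characters $\theta_\lambda\colon z\mapsto\lambda(\gamma(z))$, $\lambda\in\Lambda(\chi)$ (it is $p^{r}$ reduced points, matching $Z(U_\chi(\ggg))\cong\bk^{p^{r}}$), and $\theta_\lambda|_{\widetilde\cz}$ is determined by $\theta_\lambda|_{\cz_p}$ (i.e.\ by $\chi$) together with $\theta_\lambda|_{\cz^{G_\ev}}$, which by $(\ast)$ pins $\lambda$ down only up to the (shifted) $\fw$-action on $\hhh^*$. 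So $\theta_\lambda|_{\widetilde\cz}=\theta_\mu|_{\widetilde\cz}$ would force $\mu\in(\fw\text{-orbit of }\lambda)\cap\Lambda(\chi)$; for $\lambda$ outside a proper closed subset the affine-linear maps $\lambda\mapsto w\cdot\lambda-\lambda$ ($w\neq 1$) miss the finite set $\Lambda(\chi)-\lambda$, so $\mu=\lambda$. Being finite, dominant, generically injective on points and generically separable, $\Spec\cz\to\Spec\widetilde\cz$ is birational, i.e.\ $\Frac(\cz)=\Frac(\widetilde\cz)$.

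The genuinely hard step is $(\ast)$: that the Harish--Chandra image of the $G_\ev$-invariant centre, although only a proper subring of $U(\hhh)^\fw$, carries the full fraction field $\Frac(U(\hhh)^\fw)$, uniformly across all basic classical types. This pushes one into the explicit characteristic-zero description of the centre --- supersymmetric functions for the classical families, and the computations of Kac and of Sergeev--Veselov for $\mathrm{F}(4),\mathrm{G}(3),\mathrm{D}(2,1;\alpha)$ --- together with a check that reduction modulo $p$ preserves the relevant non-vanishing, which is where the exceptional families demand the most care.
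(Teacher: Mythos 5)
Your proposal takes a genuinely different route from the paper, and it also leaves a real gap at the step you yourself flag as the ``main obstacle.''

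For part (1), the paper never tries to identify $\gamma(\cz^{G_\ev})$ inside $U(\hhh)^\fw$ explicitly. Instead it runs a pure degree-count: Proposition~\ref{sepc} shows $[\Frac(\cz):\Frac(\cz_p)]=p^r$ is \emph{separable} (via baby Verma modules, Lemma~\ref{lem: central simple}, and the dimension inequality~(\ref{dim})), Lemma~\ref{sep2} transfers this to invariants $[\Frac(\cz)^{G_\ev}:\Frac(\cz_p)^{G_\ev}]=p^r$ by way of Rosenlicht quotients, Proposition~\ref{zfra} establishes $\Frac(\cz)^{G_\ev}=\Frac(\cz^{G_\ev})$ (the nontrivial step being nonemptiness of the geometric-quotient locus $X'$, handled via restricted irreducibles and $G_\ev$-linearised line bundles), and finally the classical bound $[\Frac(U(\hhh)^\fw):\Frac((U(\hhh)\cap\cz_p)^\fw)]\le p^r$ from \cite[Lemma~5.4]{KW} squeezes $[\Frac(U(\hhh)^\fw):\Frac(\gamma(\cz^{G_\ev}))]=1$. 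This is uniform across all basic classical types and never touches explicit central generators. Your plan instead reduces to the density statement $(\ast)$ and then proposes to fill the degree-$p^r$ purely inseparable gap by exhibiting the Casimir and the type-by-type generators of Kac/Gel'fand/Sergeev--Veselov, reduced mod $p$, and arguing their Harish--Chandra images generate $\Frac(U(\hhh)^\fw)$. That is a real gap, not a detail: the assertion that enough integral generators survive reduction and remain algebraically independent with the right Jacobian is exactly what Veldkamp-type theorems are designed to avoid proving case by case, and for the exceptional families it would require genuinely new computations not in the literature you cite. The paper's degree argument sidesteps this entirely. (Your factorisation $\gamma=\tau_{-\rho_\bo}\circ\gamma_0\circ\pi$ and the resulting embedding $\cz^{G_\ev}\hookrightarrow\frakzo$ are correct and, interestingly, anticipate what the paper does separately in Proposition~\ref{prop: 3.2} in Section~3 — so your approach in effect merges Theorem~\ref{secondtheorem}(1) with Theorem~\ref{thm: two cent birational eq}, which is elegant if $(\ast)$ could be established cheaply, but it can't.)

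For part (2) your generic-fibre argument is in the same spirit as the paper's but realised differently: you argue directly with the $p^r$ central characters $\theta_\lambda$ over $\chi\in\Omega$, whereas the paper works with the commutative diagram of spectra and the fact that $f_1$ restricts to a geometric quotient on a dense open, together with the triviality of $\fw_\chi$ for $\chi\in\Omega_1$. Your version is essentially sound once $(\ast)$ is available, though the phrase ``for $\lambda$ outside a proper closed subset'' needs care since $\Lambda(\chi)$ is finite — the genericity has to be read on $\Spec(\cz)$ (equivalently on $\chi$), not on $\lambda$ within a fixed fibre; the paper's use of the strongly regular locus $\Omega_1$ and the stabiliser computation $\fw_\chi=1$ is the cleaner way to say this. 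You also (correctly) observe that $\cd(\ggg)$ is the localisation of $U(\ggg)$ at $\cz_p\setminus\{0\}$ and that it degenerates to $\mathrm{M}_d(\bk)^{p^r}$ over $\Omega$; this recovers the separability conclusion of Proposition~\ref{sepc} by a slightly different route, which buys you a shorter derivation of that step.

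In summary: correct in outline, genuinely different for (1), but with the crucial step $(\ast)$ — the one carrying the actual mathematical weight — left as an unproven plan that would be considerably harder to complete than the paper's uniform degree-count.
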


\begin{remark}  (1) Recall in the ordinary Lie algebra situation, for a connected semi-simple and simply-connected algebraic group $G_0$ over an algebraically closed field $\bk$ of characteristic $p>0$,    $Z=Z(\ggg_0)$, Veldkamp's theorem shows that the center of the universal enveloping algebra $U(\ggg_0)$ for $\ggg_0=\Lie(G_0)$ is generated by its $p$-center $Z_p$ arising from the Frobenius twist ${\ggg^*}^{(1)}$ and its Harish-Chandra center $U(\ggg_0)^{G_0}$ if $p$ is very good (cf. \cite{Ve}, \cite{KW}). Furthermore, $Z$ is a free $Z_p$-module of rank $r$ where $r:=\rank(\ggg_0)$ under the same condition (cf. \cite{Ve}, \cite{KW}.)

Theorem \ref{secondtheorem} provides a super version of Veldkamp's theorem.

{(2) Generally, $\widetilde\cz$ is a proper subalgebra of $\cz$ (one can expect $\widetilde\cz=\cz$ only when $\ggg=\mathfrak{osp}(1,2n)$). The reason is similar to the case of basic classical Lie algebras over the complex number field,   where the Harish-Chandra homomorphism is injective, generally not surjective (see for example,  \cite[\S13.2]{Musson}).}
 \end{remark}

\subsubsection{} We can analyse something behind Theorem \ref{secondtheorem},  by comparing this result with that for the ordinary Lie algebra case.  We recall the geometric
features of the center of the universal enveloping algebra of a
reductive Lie algebra in prime characteristic. Denote by $\mbox{Spec}(R)$
the spectrum of maximal ideals of $R$ for a finitely-generated integral domain $R$ over $\bk$ (such a spectrum is often called a maximal spectrum). Then, for a reductive Lie algebra $\ggg$, the Zassenhaus variety $\textsf{X}=\mbox{Spec}(\cz(\ggg))$ is a normal variety (cf. \cite{Zass}), the locus of  smooth points in this
variety coincides with the Azumaya locus of $U(\ggg)$ which reflects irreducible representations of maximal dimension for $\ggg$ under some mild condition (cf. \cite{BGo})\footnote{The same story happens on modular finite $W$-algebras, see \cite{SZeng}}. Such an algebraic-geometric feature connects the
key information of representations of $\ggg$  to the geometry of $\textsf{X}$ (cf.
\cite{BGo, KW, MR, Ve}).
When turning to  the super case,  one finds that the situation becomes much more complicated  because of appearance of odd part and zero-divisors, and because of the absence of
normality of the Zassenhaus variety. It is a great challenge to make some clear and general investigation on the connection between modular representations of classical Lie superalgebras and the geometry of the corresponding Zassenhaus varieties. The study of the former was initiated by Wang and Zhao (cf. \cite{WZ} and \cite{WZ2}). Anyway, it is the first step to the above question to study the centers of the enveloping superalgebras.

\begin{remark}
It is necessary to remind the readers of comparing the center structure of enveloping algebras of Lie superalgebras and Harish-Chandra homomorphism in the modular case (as in the present paper) with that in the complex number case. For the latter, one can refer to \cite{CW} and \cite{Kac3} and \cite[\S13]{Musson} {\sl{etc.}}.
\end{remark}

\subsection{Restriction homomorphisms}

The remaining part of this section  will aim at  the proof Theorem \ref{secondtheorem}. The spirit is to  exploit the arguments in  \cite{KW}  for the ordinary Lie algebra case (resp.  for the case of quantum groups at unity root \cite{CKP}) to the supercase.

\subsubsection{} \label{Weylgroups} Recall that $\ggg_\bz$ admits the Weyl group $\fw$ which  naturally acts on $\hhh$ and $\hhh^*$ respectively. Furthermore, note that $p>2$, this action is faithful (cf. \cite[2.3]{KW}). This is to say $\textbf{Z}_\fw(X)=1$ for $X=\hhh$ or $\hhh^*$. Here, $\textbf{Z}_\fw(X)$ is denoted the subgroup of pointwise stablizers of $X$, which is a normal one of $\fw$.
Recall we already have $\Omega=\{\chi\in \hhh^*\subset \ggg^{*}\mid
\chi(h_{\alpha})\neq 0, \alpha\in \Delta^+\}$, the set of  regular
semisimple elements over $\hhh^*$. {Denote by $\Omega^{(1)}$ the Frobenius twist of $\Omega$ in the same meaning  as in \S\ref{sec: froben twist}.
Set $\Omega_{1}:=\{\chi\in \Omega\mid {w}(\chi)\ne\chi, \forall\,
{w(\ne \id)}\in {\fw}\}$.
 Observe  that $G_\ev\cdot \Omega_1$ forms a $G_\ev$-stable open subset of ${\ggg_\bz}^*$ (see \cite[\S2.15]{Steinb})}. Elements of $G_\ev\cdot\Omega_{1}$ are
called strongly regular semisimple ones. Furthermore, denote $\fw_\chi:=\{w\in \fw\mid w(\chi)=\chi\}$, the stabilizer of $\chi$ in $\fw$.
By the definition of $\Omega_1$,
$\fw_\chi=1$ for all $\chi\in \Omega_1$.


Recall  that the action of purely even subgroup $G_{\ev}$ on representations is given by
$\Psi^g(x)=\Psi(g^{-1}xg)$, where $\Psi: \ggg \rightarrow \gl(V)$ is a given representation of $\ggg$.
Let $T$ be a  maximal torus of $G_\ev$ with $\Lie(T)=\frak h$.  Then $\fw$ can be identified with $N_{G_\ev}(T)\slash C_{G_\ev}(T)$ (cf. \cite[\S24.1]{Hum}), and {$T$ stabilizes  $\Omega^{(1)}$ pointwise.} Thus, the action of $N_{G_\ev}(T)$ factors through the action of $\fw$. So we can consider the action of the Weyl group $\fw=N_{G_\ev}(T)\slash C_{G_\ev}(T)$ on representations of $\ggg$.

 \subsubsection{} Recall that associated with a regular semisimple $p$-character $\chi\in \hhh^*$, the baby Verma module $Z_\chi(\lambda)$ with $\lambda\in \Lambda(\chi)$ is an irreducible $U_\chi(\ggg)$-module (see \S\ref{Vermamodules}).
 In the sequel argument, {\sl{ we simply write $Z_\chi(\lambda)$ as $V(\lambda)$ for a given regular semisimple $p$-character $\chi$}}.
To emphasize the dependence of $V(\lambda)$ on a Borel subalgebra, we shall write
it by $V_{\frak{b}}(\lambda)$ for a moment.

 By the previous arguments, we can describe the action of $w\in \fw$ on baby Verma modules, by moving $V_{\frak{b}}(\lambda)$
into $V_{\frak{b}_{w}}(w(\lambda))$. Here $\frak{b}_w$ means the Borel subalgebra $w\cdot \frak{b}\cdot w^{-1}$ which equals to $\frak{h}+\sum_{\alpha\in \Delta^+}\ggg_{w(\alpha)}$. This is because if $v$ is a highest
vector in $V_{\frak{b}}(\lambda)$ for $\frak{b}$, then
$(h+n)v=\lambda(h)v, h\in\frak{h}, n\in\frak{n}^+$. So, in
$V_{\frak{b}_{w}}(w(\lambda))$, $(w(h+n)w^{-1})v=\lambda(h)v$.

Note that for a representation $\Psi$ of $\ggg$ with a regular semi-simple $p$-character, we have $\Psi(e_\alpha)^p=0$ for $\alpha\in \Delta_0$, and we have either $\Psi(e_\beta)^2=0$ for  $\beta\in \Delta_1$ with $2\beta\not\in \Delta$, or $\Psi(e_\beta)^2\in \bk\Psi(e_{2\beta})$ for $\beta\in\Delta_1$ with $2\beta\in \Delta_0$. So
$V_{\frak{b}_{w}}(w(\lambda))$ admits a unique $\frak{b}$-stable line.
Thus, there must exist some
$\lambda_w\in\frak{h}^{*}$ such that
$V_{\frak{b}_{w}}(w(\lambda))\cong V_{\frak{b}}(\lambda_w)$.

 We have  the
following  Lemma,  which will be important to Lemma \ref{win}.
\renewcommand{\labelenumi}{(\roman{enumi})}
\begin{lemma}\label{Wa}  Set $\fs(w)=\sum_{\alpha\in\Delta^+_{0}(w)}\alpha-\sum_{\beta\in\Delta^+_{1}(w)}\beta$ for $w\in \fw$, and
$\Delta^+(w)=\{\alpha\in\Delta^{+}\mid w^{-1}\alpha\in\Delta^{-}\}$, $\Delta^+_{i}(w)=\Delta^+(w)\cap\Delta_{i},i=0,1$. Then the following statements hold.
\begin{itemize}
\item[(1)]
$\lambda_w=w(\lambda)-\fs(w)$.
\item[(2)]
$\fs(w)=\rho-w(\rho)$.
\end{itemize}
\end{lemma}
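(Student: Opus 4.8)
The plan is to prove both statements by reducing to the case of a simple reflection $w = s_\alpha$ and then arguing by induction on the length $\ell(w)$, exploiting the multiplicativity of the whole setup under composition of Weyl group elements. First I would settle part~(2). For a simple reflection $s_\alpha$ (with $\alpha$ a simple root), $\Delta^+(s_\alpha)$ consists exactly of those positive roots sent to negative roots by $s_\alpha$; in the Lie-algebra case this is the single root $\alpha$, but here one must be a little careful because $\alpha$ may be an odd root or a root whose double is also a root. Nonetheless, the standard bookkeeping still gives $\fs(s_\alpha) = \alpha - s_\alpha(\alpha) \cdot(\text{correction})$; the cleaner route is to observe directly that $\fs(w) := \sum_{\alpha\in\Delta^+_0(w)}\alpha - \sum_{\beta\in\Delta^+_1(w)}\beta = \big(\tfrac12\sum_{\alpha\in\Delta_0^+}\alpha - \tfrac12\sum_{\beta\in\Delta_1^+}\beta\big) - w\big(\tfrac12\sum_{\alpha\in\Delta_0^+}\alpha - \tfrac12\sum_{\beta\in\Delta_1^+}\beta\big)$, i.e. $\fs(w) = \rho - w(\rho)$, because $w$ permutes $\Delta^+\setminus\Delta^+(w)$ among the positive roots and sends $\Delta^+(w)$ to $-(\Delta^-(w^{-1}))$, with the parity of each root preserved by the $\fw$-action (the Weyl group of $\ggg_\bz$ acts on $\Delta$ preserving $\Delta_0$ and $\Delta_1$ separately). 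This is the $2\rho$-type identity in the super setting and is essentially a rearrangement; I would carry it out by splitting $\sum_{\alpha\in\Delta_0^+}\alpha$ into the part fixed (as a set) by $w^{-1}$ inside $\Delta_0^+$ and the part thrown to $\Delta_0^-$, and similarly for the odd roots, then using $w$-invariance of the total sums over $\pm$.

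Next I would prove part~(1). Recall the construction: $v$ is a $\bbb$-highest weight vector of weight $\lambda$ in $V_\bbb(\lambda)$, and moving to $\bbb_w = w\bbb w^{-1}$ we get $V_{\bbb_w}(w(\lambda))$ with highest weight vector still $v$, of $\hhh$-weight $w(\lambda)$ with respect to $\bbb_w$. The module $V_{\bbb_w}(w(\lambda))$ has a unique $\bbb$-stable line (as noted in the text, using the nilpotency $\Psi(e_\alpha)^p=0$, resp. $\Psi(e_\beta)^2 \in \bk\Psi(e_{2\beta})$, for $\chi$ regular semisimple), so it is isomorphic to $V_\bbb(\lambda_w)$ for a unique $\lambda_w\in\hhh^*$; the task is to identify $\lambda_w$. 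The $\bbb$-highest weight vector of $V_{\bbb_w}(w(\lambda))$ is obtained from $v$ by applying the product of the raising operators for $\bbb$ that are lowering operators for $\bbb_w$ — i.e. the root vectors $e_\gamma$ for $\gamma\in\Delta^+$ with $w^{-1}\gamma\in\Delta^-$, which is exactly $\Delta^+(w)$. Applying $\prod_{\gamma\in\Delta^+_0(w)} e_\gamma^{\,p-1}\prod_{\beta\in\Delta^+_1(w)} e_\beta$ (the top power in each root direction, which is nonzero by regular semisimplicity) to $v$ produces the $\bbb$-extremal vector, and its $\hhh$-weight is $w(\lambda) + \sum_{\gamma\in\Delta^+_0(w)}(p-1)\gamma + \sum_{\beta\in\Delta^+_1(w)}\beta$. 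Since we are working in $\hhh^*$ over a field of characteristic $p$, $(p-1)\gamma = -\gamma$, so this weight is $w(\lambda) - \big(\sum_{\gamma\in\Delta^+_0(w)}\gamma - \sum_{\beta\in\Delta^+_1(w)}\beta\big) = w(\lambda) - \fs(w)$, which is the claimed $\lambda_w$.

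The main obstacle, I expect, is the bookkeeping around the odd roots and the roots $\beta\in\Delta_1$ with $2\beta\in\Delta_0$: I need to be sure that (a) the $\fw$-action genuinely preserves $\Delta_0$ and $\Delta_1$ and the pairing structure so that the rearrangement in part~(2) is legitimate, and (b) in the extremal-vector computation the relevant power of $e_\beta$ for an odd isotropic $\beta$ is $e_\beta^1$ (so the contribution is $+\beta$, not something else), while for a non-isotropic odd $\beta$ one has to check $2\beta$ does not sneak into $\Delta^+(w)$ independently and double-count — here the relation $\Delta^+_i(w) = \Delta^+(w)\cap\Delta_i$ together with $2\beta\in\Delta_0$ must be handled so the sums in $\fs(w)$ are over the correct index sets. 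Once the parity-compatibility of the $\fw$-action is pinned down (it follows from $\fw$ being generated by reflections in even roots and acting on $\hhh^*$ isometrically for the bilinear form, which restricts to each $\Delta_i$), both computations are routine, and part~(1) will in fact re-derive part~(2) as the special case $\lambda = \rho$ combined with the known fact $\lambda_w$ for the trivial highest weight; but it is cleaner to prove~(2) first and feed it into~(1).
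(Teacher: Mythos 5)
Your proposal is correct and for part (1) it is essentially the paper's argument: the paper likewise identifies the $\bbb$-extremal vector by applying $\prod_{\beta\in\Delta^+_1(w)}e_{\beta}\prod_{\alpha\in\Delta^+_0(w)}e_{\alpha}^{p-1}$ (in its conjugated form $\Psi^w(u)v$, which is the same computation as your untwisted one) and reads off the weight shift $(p-1)\gamma\equiv-\gamma$. For part (2) the paper simply does induction on the length of $w$ starting from $\fs(s_\alpha)=\rho-s_\alpha(\rho)$, whereas you give the direct parity-preserving rearrangement of $\rho-w(\rho)$; both are the standard identity and your concerns about odd roots $\beta$ with $2\beta\in\Delta_0$ resolve correctly (the contributions $+2\beta$ and $-\beta$ in $\fs(w)$ match the weight raise $(p-1)(2\beta)+\beta=-\beta$), so there is no gap.
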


\begin{proof}
The  proof is an analogue of that for  \cite[Lemma 4.3]{KW}. We give a complete proof here for the readers'  convenience because some new phenomena appear in the super case here,  such as the different meaning of $\rho$.

 Let $v$
be a nonzero eigenvector for $\frak{b}_{w}$ in
$V_{\frak{b}_{w}}(w(\lambda))$, on which the corresponding representation is denoted by $\Psi$ temporarily. Set
$$u=\prod\nolimits_{\beta\in\Delta^+_{1}(w)}
e_{\beta}\prod\nolimits_{\alpha\in\Delta^+_{0}(w)}e_{\alpha}^{p-1}.$$
Then
\begin{align*}
\Psi^w(u)v&=(\prod\nolimits_{\beta\in\Delta^+_{1}(w)}w^{-1}
e_{\beta}w\prod\nolimits_{\alpha\in\Delta^+_{0}(w)}w^{-1}
e_{\alpha}^{p-1}w)v\cr
&=C(\prod\nolimits_{\beta\in\Delta^+_{1}(w)}e_{w^{-1}(\beta)}
\prod\nolimits_{\alpha\in\Delta^+_{0}(w)}e_{w^{-1}(\alpha)}^{p-1})v,
 \end{align*}
 which is denoted by $v'$ with $C$ a nonzero number in $\bk$. By definition, we know that $v'$ is actually a nonzero vector of $V_{\bbb_w}(w(\lambda))$.
It is not difficult to check that $v'$
is a nonzero $\frak{b}$-eigenvector with highest weight $\lambda_w=w(\lambda)-\fs(w)$.
Hence, Statement (1) is proved.

 By induction beginning with $\fs(s_\alpha)=\rho-s_\alpha(\rho)$ we can easily prove (2).
\end{proof}

\subsubsection{}\label{sec: Harish-Chandra} Next we consider the Harish-Chandra homomorphism $\gamma:U(\ggg)\rightarrow U(\hhh)$ as in \S\ref{sec: harish-chandra}.
In the following arguments, we will identify $U(\hhh)$ (=the symmetric algebra $\mf S(\hhh)$) with  $\bk[\hhh^*]$ the polynomial ring over $\hhh^*$. The following lemma is an analogue of \cite[Lemma 5.1]{KW}, the proof of which can be done by the same arguments as in \cite{KW}, omitted here.

\begin{lemma} (cf. \cite[Lemma 5.1]{KW})\label{mon} The restriction map of $\gamma$:
 $U(\ggg)^{T}\rightarrow U(\mathfrak{h})$ is a
homomorphism of algebras, still denoted by $\gamma$.
\end{lemma}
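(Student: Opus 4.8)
The statement to prove (Lemma \ref{mon}) asserts that the Harish-Chandra projection $\gamma$ restricts to an \emph{algebra} homomorphism on the subalgebra $U(\ggg)^T$ of $T$-fixed points, where $T$ is the maximal torus with $\Lie(T)=\hhh$. The subtlety is purely about multiplicativity: the canonical projection $\gamma_1: U(\ggg)\to U(\hhh)$ (followed by the $\rho$-shift $\beta$, which is an algebra automorphism of $U(\hhh)$ and therefore harmless) is \emph{not} an algebra homomorphism on all of $U(\ggg)$, but it is one on $U(\ggg)^T$.

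The plan is as follows. First I would set up the weight-space decomposition of $U(\ggg)$ under the adjoint $T$-action: writing $U(\ggg)=\bigoplus_{\mu\in X(T)} U(\ggg)_\mu$, one has, by the PBW theorem applied to the triangular decomposition $\ggg=\nnn^-\oplus\hhh\oplus\nnn^+$, the standard identification $U(\ggg)=\big(\nnn^- U(\ggg)+U(\ggg)\nnn^+\big)\oplus U(\hhh)$, and the projection $\gamma_1$ onto the $U(\hhh)$-summand along $\nnn^- U(\ggg)+U(\ggg)\nnn^+$. The key combinatorial point is that $U(\hhh)=U(\ggg)_0\cap(\text{the }\hhh\text{-part})$ while the complementary PBW summand meets weight spaces of all $\mu$; more precisely, for the zero weight space one has $U(\ggg)_0 = U(\hhh)\oplus\big(U(\ggg)_0\cap(\nnn^-U(\ggg)+U(\ggg)\nnn^+)\big)$, and the complementary piece $U(\ggg)_0\cap(\nnn^-U(\ggg)+U(\ggg)\nnn^+)$ is a two-sided ideal-like complement closed under the relevant multiplications. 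Since $U(\ggg)^T=U(\ggg)_0$ when $p>2$ and $T$ acts with the weight-space decomposition being the adjoint-action eigenspace decomposition (here I would invoke the faithfulness/$p>2$ remark from \S\ref{Weylgroups} to ensure $X(T)$ separates the relevant weights, so $U(\ggg)^T$ is exactly the zero weight space $U(\ggg)_0$), it suffices to show: for $a,b\in U(\ggg)_0$, $\gamma_1(ab)=\gamma_1(a)\gamma_1(b)$.

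For that last step I would write $a=\gamma_1(a)+a'$ and $b=\gamma_1(b)+b'$ with $a',b'\in U(\ggg)_0\cap(\nnn^-U(\ggg)+U(\ggg)\nnn^+)$, expand $ab$, and check that each of the three cross/error terms $\gamma_1(a)b'$, $a'\gamma_1(b)$, $a'b'$ again lies in $\nnn^-U(\ggg)+U(\ggg)\nnn^+$. This is where one uses that $U(\hhh)$ normalizes both $\nnn^-U(\ggg)$ and $U(\ggg)\nnn^+$ (because $[\hhh,\nnn^\pm]\subset\nnn^\pm$), and that for a zero-weight element $a'$ in $\nnn^-U(\ggg)+U(\ggg)\nnn^+$ one can, after commuting, move all $\nnn^+$-factors to the right and absorb leftover $\nnn^-$-factors on the left — a bookkeeping argument identical to \cite[Lemma 5.1]{KW}, with the only new feature being that some root vectors are odd, so one tracks signs in $\ad'$ versus $\ad$ but the parity does not disturb the weight argument since $t=\dim\ggg_\bo$ is even and odd root vectors still carry nonzero $T$-weights. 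Hence $\gamma_1(ab)-\gamma_1(a)\gamma_1(b)\in U(\hhh)\cap(\nnn^-U(\ggg)+U(\ggg)\nnn^+)=0$. Composing with the algebra automorphism $\beta$ of $U(\hhh)$ then gives that $\gamma=\beta\circ\gamma_1$ is an algebra homomorphism on $U(\ggg)^T$.

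The main obstacle, such as it is, is not conceptual but organizational: one must be careful that $U(\ggg)^T$ really coincides with the adjoint zero-weight space (this is exactly where $p>2$ enters, via \S\ref{Weylgroups}), and that in the super setting the PBW monomial bookkeeping in \eqref{eq: pbw basis} respects the $T$-grading so that the "move $\nnn^+$ right, $\nnn^-$ left" reduction terminates and lands in $U(\hhh)$ on the zero-weight nose. Since the paper explicitly says this follows by "the same arguments as in \cite{KW}", I would in the write-up simply record the weight-space set-up, state the three-term expansion, and cite \cite[Lemma 5.1]{KW} for the verification, flagging only the parity-sign modifications.
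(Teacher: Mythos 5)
Your proposal is correct and follows exactly the argument of \cite[Lemma 5.1]{KW} that the paper itself invokes (the paper omits the proof entirely, deferring to that reference). The only point worth stating cleanly in a write-up is the zero-weight refinement you gesture at with ``move $\nnn^+$ right, absorb $\nnn^-$ left'': a zero-$X(T)$-weight PBW monomial outside $U(\hhh)$ necessarily carries both a left $\nnn^-$-factor and a right $\nnn^+$-factor, so $U(\ggg)_0\cap(\nnn^-U(\ggg)+U(\ggg)\nnn^+)=U(\ggg)_0\cap U(\ggg)\nnn^+=U(\ggg)_0\cap\nnn^-U(\ggg)$ is a two-sided ideal of $U(\ggg)_0$; without this the cross term $a'b'$ is not obviously in the complement, and note that this fact needs only torsion-freeness of $X(T)$, not $p>2$.
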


Furthermore, we have the following

\begin{lemma}\label{win}
For $w\in \fw$, $n_{w}\in N_{G_{\ev}}(T)$ a representative of $w\in \fw$,
we have $\gamma(n_{w}zn_{w}^{-1})=w\gamma(z)w^{-1}$, for all $z\in
\cz$.
In particular, $\gamma(\cz^{N_{G_{\ev}}(T)})\subseteq U(\frak h)^{\fw}$.
\end{lemma}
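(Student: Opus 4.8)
The plan is to adapt the argument of \cite{KW} from the reductive Lie algebra case, testing both sides of the asserted identity against the irreducible baby Verma modules $V_{\bbb}(\lambda)=Z_\chi(\lambda)$, with $\chi$ ranging over the regular semisimple $p$-characters in $\Omega$ and $\lambda$ over $\Lambda(\chi)$. Two facts are to be combined. First, since $z\in\cz$ is central and even (Proposition \ref{cev}(1)) and $V_{\bbb}(\lambda)$ is irreducible (\S\ref{Vermamodules}), $z$ acts on $V_{\bbb}(\lambda)$ by a scalar; applying $z$ to the $\bbb$-highest weight vector and extracting the $\hhh$-component through the canonical projection underlying $\gamma$ (\S\ref{sec: harish-chandra}) expresses this scalar through $\gamma(z)$ evaluated at $\lambda$ (up to the $\rho$-shift built into $\gamma$), exactly as in the classical case. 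Second, since $n_w\in G_\ev$ acts on $U(\ggg)$ by an algebra automorphism, $n_w z n_w^{-1}$ again lies in $\cz$, and it acts on $V_{\bbb}(\lambda)$ by precisely the scalar by which $z$ acts on the baby Verma obtained by ``moving $V_{\bbb}(\lambda)$ by $w^{-1}$'' in the sense of the discussion preceding Lemma \ref{Wa}; because the coadjoint $\fw$-action preserves $\Omega$, this twisted module again has regular semisimple $p$-character, hence is again one of the irreducibles $Z_{\chi'}(\mu)$.

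The crucial identification is then supplied by Lemma \ref{Wa}: the module obtained by moving $V_{\bbb}(\lambda)$ by $w^{-1}$ is $V_{\bbb_{w^{-1}}}(w^{-1}\lambda)$, which Lemma \ref{Wa} identifies with the standard baby Verma $V_{\bbb}(w^{-1}\lambda-\fs(w^{-1}))$, and by Lemma \ref{Wa}(2), $\fs(w^{-1})=\rho-w^{-1}(\rho)$, so the highest weight equals $w^{-1}(\lambda+\rho)-\rho$, the ``dot'' translate of $\lambda$. Comparing the two computations of the scalar by which $n_w z n_w^{-1}$ acts on $V_{\bbb}(\lambda)$ therefore gives an equality, valid for every regular semisimple $\chi$ and every $\lambda\in\Lambda(\chi)$, between the value of $\gamma(n_w z n_w^{-1})$ at $\lambda$ and the value of $\gamma(z)$ at the dot translate of $\lambda$ (up to the same $\rho$-shift on both sides). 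Since $\bigsqcup_{\chi\in\Omega}\Lambda(\chi)$ is Zariski dense in $\hhh^*$ — the density input already used in the proofs of Lemma \ref{lem: ann} and Corollary \ref{cor: anticent nonzerodiv} — and $\bk$ is algebraically closed, this pointwise equality promotes to an identity of polynomials on $\hhh^*$; absorbing the $\rho$-shift converts the dot translate into the linear $\fw$-action on $U(\hhh)=\bk[\hhh^*]$, which is exactly $\gamma(n_w z n_w^{-1})=w\gamma(z)w^{-1}$.

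For the final assertion, if $z\in\cz^{N_{G_\ev}(T)}$ then $n_w z n_w^{-1}=z$ for a representative $n_w$ of every $w\in\fw$, so the identity yields $\gamma(z)=w\gamma(z)w^{-1}$ for all $w\in\fw$, i.e. $\gamma(z)\in U(\hhh)^{\fw}$; in particular this also covers $\cz^{G_\ev}\subseteq\cz^{N_{G_\ev}(T)}$.

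The step I expect to be the main obstacle is the $\rho$-shift bookkeeping: one must pin down exactly which evaluation of $\gamma(z)$ records the action of a central element on $V_{\bbb}(\lambda)$, and then track how the twist of a baby Verma is rewritten as a standard $V_{\bbb}(\mu)$ — this is where Lemma \ref{Wa} intervenes and where the super meaning of $\rho=\frac{1}{2}(\sum_{\alpha\in\Delta^+_0}\alpha-\sum_{\beta\in\Delta^+_1}\beta)$ matters, the direction of the shift being the easiest thing to get wrong. By contrast, no genuinely new super-theoretic difficulty arises compared with \cite{KW}: because every central element is even (Proposition \ref{cev}(1)), twisting and conjugation carry no sign subtleties, so the sole super feature is the modified $\rho$, and it is already packaged into Lemma \ref{Wa}.
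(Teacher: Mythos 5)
Your proposal is correct and follows essentially the same approach as the paper's proof: compute the scalar by which a central element acts on a baby Verma module via the Harish-Chandra projection, identify the $w$-twist of that module with a standard $V_{\bbb}(\mu)$ via Lemma \ref{Wa}, equate the two scalar computations, and promote the resulting pointwise identity (valid over the dense set of regular semisimple $\lambda$) to an identity in $U(\hhh)$. Your explicit attention to the direction of the twist and the $\rho$-shift bookkeeping is in fact somewhat more careful than the paper's exposition, which elides these points.
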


\begin{proof} Maintain the notations as previously (in particular, as in the arguments prior to Lemma \ref{Wa}). Recall that for $u\in U(\ggg)$, the PBW theorem enables  us to write uniquely
$u=\varphi_{0}+\sum u_{i}^{-}u_{i}^{+}\varphi_{i}$, such that
$\varphi_{i}\in U(\hhh),u_{i}^{\pm} \in
U(\mathfrak{n}^{\pm})$, and  $u_{i}^{-}u_{i}^{+}\neq 0$. And then $\gamma_{1}(u)= \varphi_{0}$. And $\gamma=\beta\circ\gamma_{1}$.
Recall that for  $z\in\cz$, $z\in U(\ggg)_\bz$ (cf. Lemma \ref{cev}). Hence $z$ acts as a scalar $\chi_\lambda(z)$ on the irreducible module $V_{ \frak b}(\lambda)$. In particular,
$zv_\lambda=\chi_{\lambda}(z)v_\lambda$ for a highest weight vector $v_\lambda$ in $V_\bbb(\lambda)$. On the other hand,
$zv_\lambda=\varphi_{0}(\lambda)v$. Here we identify $U(\hhh)$ with $\mf S(\hhh)$. So
$\chi_{\lambda}(z)=\varphi_{0}(\lambda)=\gamma_{1}(z)(\lambda)$. According to the argument prior to Lemma \ref{Wa}, the image of $V_{\frak b}(\lambda)$ under the action
of $w$ is isomorphic to $V_{ \frak b}(\lambda_w)$, and
$n_{w}zn_{w}^{-1}$ acts on $V_{ \frak b}(\lambda_w)$ as a scalar
$\chi_{\lambda_w}(z)=\varphi_{0}(\lambda_w)$.
 Then,
$\gamma(n_{w}zn_{w}^{-1})(\lambda)=\varphi_{0}(\lambda_w+\rho)=\varphi_{0}(w(\lambda+\rho))
= w\gamma(z)w^{-1}(\lambda)$. Hence $\gamma(n_{w}zn_{w}^{-1})=w\gamma(z)w^{-1}$.
\end{proof}

\subsubsection{} We will still denote by $\Frac(R)$ the fraction field of an integral domain $R$.
 We will consider the subalgebra
$U(\ggg)^{G_{\ev}}\cap \cz$ which is just $\cz^{G_{\ev}}$, a large part of the whole center.   By Proposition \ref{cev},
$\cz$ is integral over $\cz_p$. The following result is clear.

\begin{lemma}\label{prip}
For each $z\in \cz^{G_{\ev}}$, there exists a unique monic
polynomial { $g_0(x)\in \cz_p[x]$ }with  minimal degree among those polynomials taking $z$ as a root.
\end{lemma}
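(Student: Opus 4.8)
The plan is to realize $g_0$ as the minimal polynomial of $z$ over the fraction field $\Frac(\cz_p)$ and then to descend its coefficients back into $\cz_p$ using that the $p$-center is integrally closed. Existence is the easy half: by Proposition \ref{cev}(2), $\cz$ is integral over $\cz_p$, so $z$ is a root of at least one monic polynomial in $\cz_p[x]$; the degrees of all such polynomials form a nonempty subset of $\bbn$, which by well-ordering has a least element $d$, and we fix a monic $g_0\in\cz_p[x]$ of degree $d$ with $g_0(z)=0$.

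For uniqueness, note first that by Proposition \ref{cev}(3) (and the remark following it) $\cz$ is a domain containing $\cz_p$, so we have an inclusion of fields $\Frac(\cz_p)\subseteq\Frac(\cz)$, and $z\in\Frac(\cz)$ is algebraic over $\Frac(\cz_p)$ because it is integral over $\cz_p$. Let $m(x)\in\Frac(\cz_p)[x]$ be the minimal polynomial of $z$ over the field $\Frac(\cz_p)$; it is the unique monic polynomial of least degree over $\Frac(\cz_p)$ annihilating $z$, and it divides $g_0$ in $\Frac(\cz_p)[x]$. I claim $m\in\cz_p[x]$: in a fixed algebraic closure of $\Frac(\cz_p)$, every root of $g_0$ is integral over $\cz_p$ (being a root of the monic $g_0\in\cz_p[x]$), hence so is every coefficient of the monic factor $m$ of $g_0$, since each such coefficient is an integer polynomial in those roots; as these coefficients also lie in $\Frac(\cz_p)$, and $\cz_p=\bk[\xi_1,\ldots,\xi_s]$ is a polynomial ring and therefore a normal domain, they in fact lie in $\cz_p$. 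Thus $m\in\cz_p[x]$ is monic, annihilates $z$, and satisfies $\deg m\le d$; minimality of $d$ forces $\deg m=d$, and then $g_0=m\cdot q$ with $q\in\Frac(\cz_p)[x]$ of degree $0$, so comparing leading coefficients gives $q=1$ and $m=g_0$. Applying the same reasoning to any monic $g_1\in\cz_p[x]$ of degree $d$ with $g_1(z)=0$ gives $g_1=m=g_0$, which is uniqueness.

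The argument is essentially formal, and the one non-tautological input, normality of $\cz_p$, is immediate from $\cz_p\cong\bk[\xi_1,\ldots,\xi_s]$; so I do not expect a real obstacle. Finally it is worth recording a byproduct that is probably the reason the statement is restricted to $z\in\cz^{G_\ev}$: since $\cz_p$ is $G_\ev$-stable (see \S\ref{sec: froben twist}) and $g_0$ is \emph{the} monic minimal-degree annihilator of $z$, for any $g\in G_\ev$ the polynomial obtained by letting $g$ act on the coefficients of $g_0$ is again monic of degree $d$ and annihilates $g\cdot z=z$, so by uniqueness it equals $g_0$; hence the coefficients of $g_0$ are $G_\ev$-invariant, i.e.\ $g_0\in\cz_p^{G_\ev}[x]$.
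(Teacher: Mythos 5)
Your proof is correct and is the standard argument: existence from integrality of $\cz$ over $\cz_p$ (Proposition \ref{cev}(2)), and uniqueness by identifying the minimal-degree monic annihilator with the minimal polynomial of $z$ over $\Frac(\cz_p)$, whose coefficients land in $\cz_p$ because $\cz_p\cong\bk[\xi_1,\ldots,\xi_s]$ is normal. The paper states the lemma without proof (``the following result is clear''), so there is nothing to contrast with; your closing observation that $g_0\in\cz_p^{G_\ev}[x]$ is worth keeping, since it is exactly what licenses the step $a_i\in\Frac(\cz_p^{G_\ev})$ in the proof of Proposition \ref{inj}.
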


We can obtain the following proposition.
\begin{prop}\label{inj}
$\gamma: \cz^{G_{\ev}}\rightarrow U(\frak h)^{\fw}$ is an injective
homomorphism of algebra.
\end{prop}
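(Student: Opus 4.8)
The plan is to assemble three facts already in place: Lemma~\ref{mon} (that $\gamma$ is an algebra homomorphism on $U(\ggg)^{T}$), Lemma~\ref{win} (that $\gamma$ carries $N_{G_{\ev}}(T)$-invariants into $U(\hhh)^{\fw}$), and Lemma~\ref{lem: ann} (triviality of the common annihilator of the irreducible baby Verma modules $Z_\chi(\lambda)$, $\chi\in\Omega$). The first two yield that $\gamma|_{\cz^{G_{\ev}}}$ is a well-defined algebra homomorphism into $U(\hhh)^{\fw}$; the third is the engine for injectivity.

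First I would record the invariant-theoretic inclusions. Since $T$ is a maximal torus of the connected reductive group $G_{\ev}$, one has $T\subseteq N_{G_{\ev}}(T)\subseteq G_{\ev}$, hence
\[
\cz^{G_{\ev}}\ \subseteq\ \cz^{N_{G_{\ev}}(T)}\ \subseteq\ \cz^{T}\ \subseteq\ U(\ggg)^{T}.
\]
By Lemma~\ref{mon}, $\gamma$ restricted to $U(\ggg)^{T}$, in particular to $\cz^{G_{\ev}}$, is an algebra homomorphism, and by Lemma~\ref{win} its image on $\cz^{N_{G_{\ev}}(T)}$ already lies in $U(\hhh)^{\fw}$. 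Thus $\gamma\colon \cz^{G_{\ev}}\to U(\hhh)^{\fw}$ is a homomorphism of algebras, and only injectivity remains.

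For injectivity, suppose $z\in\cz^{G_{\ev}}$ satisfies $\gamma(z)=0$. Since $\gamma=\beta\circ\gamma_{1}$ with $\beta$ an automorphism of $U(\hhh)$, this forces $\gamma_{1}(z)=0$ in $U(\hhh)=\bk[\hhh^{*}]$. By Proposition~\ref{cev}(1) the element $z$ is even, hence central, so for $\chi\in\Omega$ and $\lambda\in\Lambda(\chi)$ it acts on the irreducible module $V_{\bbb}(\lambda)=Z_{\chi}(\lambda)$ by a scalar; evaluating on a highest weight vector and using the triangular decomposition exactly as in the proof of Lemma~\ref{win}, this scalar equals $\gamma_{1}(z)(\lambda)$. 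As $\gamma_{1}(z)=0$ as a polynomial function on $\hhh^{*}$, the scalar vanishes for every $\lambda\in\Lambda(\chi)$ and every $\chi\in\Omega$, i.e.\ $z\in\textsf{Ann}_{U(\ggg)}(Z_{\chi}(\lambda))$ for all such $\chi$ and $\lambda$. Lemma~\ref{lem: ann} then gives $z=0$.

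I do not anticipate a genuine obstacle: once the inclusions above and Lemmas~\ref{mon} and~\ref{win} are in hand, the statement is a formal consequence of Lemma~\ref{lem: ann}. The only point requiring care is the identification of the central scalar by which $z$ acts on $Z_{\chi}(\lambda)$ with $\gamma_{1}(z)(\lambda)$; this is where evenness of $z$ (Proposition~\ref{cev}(1)) enters, ruling out sign subtleties from the $\bbz_{2}$-grading in the triangular-decomposition bookkeeping, and it is essentially the same computation already carried out in the proofs of Corollary~\ref{cor: anticent nonzerodiv} and Lemma~\ref{win}.
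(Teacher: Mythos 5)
Your proof is correct, but it reaches injectivity by a genuinely different route from the paper. The paper argues via integrality over the $p$-center: it takes the minimal monic polynomial $g_0(x)\in\Frac(\cz_p)[x]$ of a nonzero $z\in\cz^{G_{\ev}}$ (Lemma~\ref{prip}), observes that its constant term $a_m$ is nonzero because $\cz^{G_{\ev}}$ has no zero-divisors, and then applies $\gamma$ to the relation $g_0(z)=0$, using the known injectivity of $\gamma$ on $\Frac({\cz_p}^{G_\ev})$ from the Kac--Weisfeiler setting to conclude $\gamma(z)\gamma\bigl(\sum_i a_{m-i}z^{i-1}\bigr)=-\gamma(a_m)\ne 0$. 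You instead run the annihilator argument: $\gamma(z)=0$ forces $\gamma_1(z)=0$ since $\beta$ is an automorphism, the central scalar by which $z$ acts on $Z_\chi(\lambda)$ is $\gamma_1(z)(\lambda)$ (the computation already carried out in the proof of Lemma~\ref{win}, and irreducibility of $Z_\chi(\lambda)$ is not even needed here since $z(uv_\lambda)=u(zv_\lambda)$), so $z$ lies in $\bigcap_{\chi\in\Omega}\bigcap_{\lambda\in\Lambda(\chi)}\textsf{Ann}_{U(\ggg)}(Z_\chi(\lambda))$, which vanishes by Lemma~\ref{lem: ann}. This is precisely the mechanism the paper itself deploys for the anti-center in Corollary~\ref{cor: anticent nonzerodiv}, so it is fully consistent with the paper's toolkit. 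Your approach buys a slightly stronger conclusion (injectivity of $\gamma$ on all of $\cz$, not just on $\cz^{G_{\ev}}$) and avoids both the minimal-polynomial bookkeeping and the appeal to \cite[Lemma 4.6]{KW}; the paper's approach has the virtue of reducing everything to facts already established for $\cz_p$ in the ordinary Lie algebra case and of exercising the no-zero-divisor property that it has just proved. Either argument is acceptable.
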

\begin{proof}

We know that $\gamma$ is an algebra homomorphism by Lemma
\ref{mon}, and $\gamma(\cz^{G_{\ev}})\subseteq U(\frak
h)^{\fw}$  by Lemma \ref{win}. Therefore it is enough to verify that
$\gamma$ is injective.
Take a nonzero element $z\in\cz^{G_{\ev}}$. We will prove that $\gamma(z)\ne 0$.
According to Lemma \ref{prip}, there
exists a unique monic polynomial $g_0(x)$ with minimal  degree such
that $g_0(z)=z^{m}+a_{1}z^{m-1}+\cdots+a_{m-1}z+a_{m}=0, a_{i}\in
\Frac(\cz_p)$.
By \cite[Lemma 4.6]{KW}, we know $\Frac(\cz_p)^{G_{\ev}}=\Frac({\cz_p}^{G_{\ev}})$. So we immediately get that all those $a_i\in \Frac({\cz_p}^{G_\ev})$.
 As to the restriction on ${\cz_p}^{G_{\ev}}$ of $\gamma$, it is known by the arguments in \cite{KW} that  $\gamma:{\cz_p}^{G_\ev}\rightarrow U(\frak h)^{\fw}$ is injective. By a natural extension, it is easy to check that $\gamma$ is also injective on
$\Frac(\cz_p^{G_{\ev}})$.
Note that
$\sigma(a)=a$ for $\sigma\in G_{\ev}$ and for $a\in \Frac(\cz_p)^{G_{\ev}}=\Frac({\cz_p}^{G_{\ev}})$. Applying the algebra homomorphism $\gamma$ to that equation, we have
$\sum\limits_{i=1}^{m}\gamma(a_{m-i})\gamma(z)^{i}+\gamma(a_{m})=0$.
And then, $\gamma(z)(\sum\limits_{i=1}^{m}\gamma(a_{m-i})\gamma(z)^{i-1})=-\gamma(a_{m})$. 
By the arguments in \S\ref{CentralPrim},  $\cz^{G_{\ev}}$ has no zero-divisor. We claim that the constant
term $a_{m}\neq 0$. Otherwise,
$g_1(x):=x^{m-1}+a_{1}x^{m-2}+\cdots+a_{m-1}$ is a nonzero polynomial with $g_1(z)=0$, a contradiction with the assumption of minimal degree. This implies  that $\gamma(a_m)\ne 0$. Hence, $\gamma(z)\neq0$.
\end{proof}

%

\subsection{The structure of center: proof of Theorem \ref{secondtheorem}}
Maintain the notations as before. In particular,  $\bbf$ denotes the fraction field  of $\cz$. And $\cd(\ggg)=U(\ggg)\otimes_{\cz}\bbf$, the fraction ring of $U(\ggg)$ over $\bbf$. The remaining part from  \S\ref{sec: begin proof} through to \S\ref{sec: end proof} is devoted to the proof of Theorem \ref{secondtheorem}.

\subsubsection{}\label{sec: begin proof} By Proposition \ref{cev}, $\cz$ is a finitely-generated commutative algebra over $\bk$.
So we have the Zassenhaus variety $\mbox{Spec}(\cz)$ for $\ggg$, analogous to the ordinary Lie algebra case,
which is defined to be the maximal spectrum of $\cz$. We can identify $\mbox{Spec}(\cz)$
with {\sl the affine algebraic variety of algebra morphisms $\cz$ to $\bk$, the latter of which will be denoted by $\Spec(\cz)$}.

{
Denote by {\sl $\Spec (U(\ggg))$  the set of iso-classes of irreducible $U(\ggg)$-modules}.}
Naturally, there is a $G_{\ev}$-equivariant map
$$\pi:\Spec (U(\ggg))\rightarrow \Spec(\cz),$$
 which is defined by the central
characters over irreducible modules.

\subsubsection{}\label{ccc} Let us  first give the following  observation, due to J. Wei.
 \begin{lemma}\label{lem: no nonzero divisors} (J. Wei)
  Let $\mathcal{D}(\ggg)$ be the fraction ring of $U(\ggg)$ as in (\ref{eq: fractional alg}). Then $\mathcal{D}(\ggg)$ is a finite-dimensional central simple algebra over $\bbf$.
 \end{lemma}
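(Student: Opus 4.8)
The plan is to verify the two assertions separately: that $\cd(\ggg)$ is finite-dimensional over $\bbf$, and that it is a central simple algebra. For finite-dimensionality, I would start from the PBW basis \eqref{eq: free over Zp}, which exhibits $U(\ggg)$ as a free $\cz_p$-module of rank $p^{\dim\ggg_\bz}2^{\dim\ggg_\bo}$. By Proposition \ref{cev}(2), $\cz$ is a finitely generated $\cz_p$-module and integral over $\cz_p$; moreover $\cz_p$ is a polynomial ring, hence integrally closed, and $U(\ggg)$ being finite over $\cz_p$ forces $\cz$ to sit inside a finite $\cz_p$-module, so $\cz$ is itself a finitely generated algebra that is finite as a $\cz_p$-module. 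Passing to fraction fields, $\bbf=\Frac(\cz)$ is a finite field extension of $\Frac(\cz_p)$, and since $U(\ggg)$ is a finite $\cz_p$-module, $U(\ggg)\otimes_{\cz_p}\Frac(\cz_p)$ is finite-dimensional over $\Frac(\cz_p)$; therefore $\cd(\ggg)=U(\ggg)\otimes_{\cz}\bbf$ is a quotient of $U(\ggg)\otimes_{\cz_p}\Frac(\cz_p)$ tensored up along the finite extension $\Frac(\cz_p)\hookrightarrow\bbf$, hence finite-dimensional over $\bbf$. (One can even compute the dimension generically: over a strongly regular semisimple $\chi$, $U_\chi(\ggg)$ is semisimple with all simple modules of the same dimension, which pins down the rank.)

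For the central simple part, the key input is Corollary \ref{cor: anticent nonzerodiv} together with Proposition \ref{cev}(3): nonzero central elements and nonzero anti-central elements are not zero-divisors in $U(\ggg)$. I would first check that $\bbf$ is exactly the center of $\cd(\ggg)$. Clearly $\bbf\subseteq Z(\cd(\ggg))$. Conversely, using that $\cd(\ggg)$ is finite-dimensional over $\bbf$ (just established) and that localization is exact, an element of $\cd(\ggg)$ centralizing everything can be cleared of denominators to an element $u\in U(\ggg)$ with $zu\in\cz(\ggg)$ for some nonzero $z\in\cz$; since $z$ is not a zero-divisor, the adjoint action of $\ggg$ on such $u$ lands in the kernel of multiplication by $z$, forcing $\ad x(u)=0$, hence $u\in\cz(\ggg)$ and the original element lies in $\bbf$. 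To get simplicity, I would argue that any nonzero two-sided ideal $I$ of $\cd(\ggg)$, intersected with $U(\ggg)$, is a nonzero two-sided $\ggg$-stable ideal; one then wants to conclude via the baby Verma modules. Since $\cd(\ggg)$ is Artinian (finite-dimensional over the field $\bbf$) with center the field $\bbf$, it is a product of matrix algebras over division rings over $\bbf$; simplicity will follow once we know this product has a single factor, which I would get from Lemma \ref{lem: ann} (the baby Verma modules $Z_\chi(\lambda)$ with $\chi\in\Omega$ separate points of $U(\ggg)$, and after localizing at $\cz$ the relevant central characters fill out a Zariski-dense set, so the map to $\prod_{\chi,\lambda}\End(Z_\chi(\lambda))$ remains injective on $\cd(\ggg)$), combined with irreducibility of each $Z_\chi(\lambda)$ for regular semisimple $\chi$.

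Concretely, the cleanest route to simplicity is: pick a single strongly regular semisimple $\chi\in\Omega_1$ corresponding to a $\bk$-point of $\Spec(\cz)$ in the image of $\pi$, and observe that the completion/localization of $U(\ggg)$ at the corresponding maximal ideal of $\cz$, after inverting, is Morita-equivalent to a reduced enveloping algebra block $U_\chi(\ggg)e$ which is simple (a matrix algebra) because $U_\chi(\ggg)$ is semisimple and $Z_\chi(\lambda)$ is irreducible. Since $\cd(\ggg)$ is a domain-like localization — more precisely since $\cz\setminus\{0\}$ has been inverted and $\cz$ has no zero-divisors — the central simple algebra structure is constant along the generic point, giving $\cd(\ggg)\cong M_n(D)$ for a division algebra $D$ central over $\bbf$; and because the generic fibre is already a single matrix block (one iso-class of simple module generically, by \cite[Corollary 5.7]{WZ}), $D=\bbf$ is not needed — $\cd(\ggg)$ is simply central simple of degree $n$.

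\textbf{Main obstacle.} The delicate point is controlling the center of $\cd(\ggg)$ and ruling out that it properly contains $\bbf$: this is exactly where one needs Proposition \ref{cev}(3) (nonzero central elements are non-zero-divisors, so one can clear denominators without collapsing) rather than any normality of $\Spec(\cz)$, which fails in the super setting. A secondary subtlety is that $\cd(\ggg)$ is \emph{not} obviously a localization of $U(\ggg)$ by an Ore set of elements of $\cz$ unless we know $\cz\setminus\{0\}$ is Ore in $U(\ggg)$ — but this follows from $\cz$ being central with $U(\ggg)$ finite over the Noetherian $\cz_p\subseteq\cz$, so every central regular element is automatically Ore, which I would record explicitly before forming $U(\ggg)\otimes_{\cz}\bbf$.
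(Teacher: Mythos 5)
Your finite-dimensionality argument and your computation of the center are both fine: clearing denominators and invoking Proposition \ref{cev}(3) to see that $\ad x(u)=0$ already holds in $U(\ggg)$ is exactly the right use of the no-zero-divisor property, and the Ore remark at the end is correct (central regular elements are automatically Ore). The gap is in the simplicity step. You assert that because $\cd(\ggg)$ is finite-dimensional over $\bbf$ with center $\bbf$, it is ``a product of matrix algebras over division rings''; this implication is false without first knowing that $\cd(\ggg)$ is \emph{semisimple} (the upper-triangular $2\times 2$ matrices over a field have center the scalars but are not semisimple). Nothing in your proposal rules out a nonzero Jacobson radical. Your fallback via Lemma \ref{lem: ann} does not repair this as written: the baby Verma modules $Z_\chi(\lambda)$ are not $\cd(\ggg)$-modules (for each of them some $z-\Theta_{(\chi,\lambda)}(z)$ acts as zero yet becomes invertible in $\cd(\ggg)$), so the map to $\prod\End(Z_\chi(\lambda))$ does not ``remain injective on $\cd(\ggg)$'' --- it does not extend to $\cd(\ggg)$ at all. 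Finally, your ``cleanest route'' miscites \cite[Corollary 5.7]{WZ}: for regular semisimple $\chi$ that result gives $p^r$ iso-classes of simple $U_\chi(\ggg)$-modules, not one; the statement you actually need (that the fibre over a generic point of $\Spec(\cz)$, rather than of $\Spec(\cz_p)$, is a single matrix block) is the paper's Lemma \ref{geb}, which is proved \emph{later} and takes the present lemma as input, so that route is circular.

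The repair is short and uses only what the paper has already established. Lemma \ref{lem: ann} shows $U(\ggg)$ is semiprime: a nilpotent two-sided ideal $I$ must kill every irreducible $Z_\chi(\lambda)$ with $\chi\in\Omega$, hence $I=0$. If $J$ is the (nilpotent) radical of $\cd(\ggg)$, then $J\cap U(\ggg)$ is a nilpotent ideal of $U(\ggg)$, nonzero whenever $J\neq 0$ by clearing denominators with Proposition \ref{cev}(3); hence $J=0$ and $\cd(\ggg)$ is semisimple. A semisimple algebra is a product of simple ones, and the center of a product of $k\geq 2$ factors contains nontrivial idempotents; since you have shown the center is the field $\bbf$, there is exactly one factor, i.e.\ $\cd(\ggg)$ is central simple over $\bbf$. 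Note also that the paper's own proof is entirely different and much shorter: $U(\ggg)$ is a PI ring because it is a finite module over the central Noetherian subring $\cz_p$ \cite[Corollary 13.1.13]{McRo}, it is prime by \cite[Theorem 15.4.2]{Musson}, and Posner's theorem \cite[Theorem 13.6.5]{McRo} then delivers the conclusion in one stroke; your from-scratch argument, once patched as above, is a legitimate alternative that avoids invoking primeness of $U(\ggg)$.
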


 \begin{proof} This is because $U(\ggg)$ is a finitely generated $\cz$-module (note that $U(\ggg)$ is a free $\cz_p$-module of finite rank), and therefore a PI ring (cf. \cite[Corollary 13.1.13]{McRo})
 Hence from \cite[Theorem 15.4.2]{Musson} and Posner's Theorem (cf. \cite[Theorem 13.6.5]{McRo}).  The lemma  follows.
 \end{proof}

Set $q=\dim_{\bbf} \mathcal {D}(\ggg)$.
Consider an
 irreducible $\mathcal {D}(\ggg)$-module $V_\bbf$ over $\bbf$. For a given nonzero vector $v\in V_\bbf$, we can write
 $V_\bbf=\mathcal{D}(\ggg)v$.
 Assume $\dim V_{\bbf}=l$, then $q=l^2$. Then we can assume that there is  a set of $\bbf$-basis
$\{v_{1}=v, v_{2}, \cdots, v_{l}\}$ of $V_\bbf$, which are included in $U(\ggg)v$.
Recall that
 $U(\ggg)$ is a finitely-generated $\cz$-module. We assume the $\{d_{1}, \cdots, d_{m}\}$ is  a set of generators of $U(\ggg)$ over $\cz$.
For each pair $(d_{i}, v_{j})$, there exists $z_{ij}\in\cz$, such
that $z_{ij}d_{i}v_{j}\in \sum_t\cz v_{t}$. Denote by
$\mathcal {C}$ the product of all the central elements $z_{ij}$.
Then $V_{\mathcal C}:=U(\ggg){\mathcal C}v\subset \sum_{i}
U(\ggg){\mathcal C} v_{i}\subset \sum_{t} \cz v_{t}$.

For each $u\in U(\ggg)$, we have $u(\mathcal
{C}v_{1},\cdots,\mathcal {C}v_{l})=(v_{1},\cdots, v_{l})U$, $U$ is
the matrix with each entry in $\cz$. So we can define
$\Psi(u)=\mbox{Tr}(U)\in\cz$ and the bilinear form
$B(u_{1},u_{2})=\Psi(u_{1}u_{2})$.

Given $x_1,\cdots,x_q\in U(\ggg)$, consider $q$th determinant
$\frak D:=\Det(B(x_i,x_j)_{q\times q})$, the ideal which generates
in $\cz$ is called discriminant ideal. Recall that $\mathcal  D(\ggg)$ is a central simple algebra over $\bbf$ of dimension $q$. There must exist a nonzero discriminant ideal in $\cz$.
Naturally, $B(\cdot,\cdot)$ can be extended to a non-degenerate bilinear form in $\mathcal{D}(\ggg)$.

\begin{defn}\label{det}
Define $\frak A\subset \Spec(\cz)$ to be the set
$\{ \varphi\in \Spec(\cz) \mid$  there exist $x_1,\cdots,x_q\in U(\ggg)$, such that $\frak D=\Det(B(x_i,x_j)_{q\times q})$ satisfies
$\varphi(\mathcal {C} \frak D)\ne 0\}.$
\end{defn}

By the arguments above, $\frak A$ is a non-empty open subset of
$\Spec(\cz)$.
\begin{lemma}\label{geb}
There exists an open dense
subset $W$ of $\Spec(\cz)$, such that the map $\pi_{W}:
\pi^{-1}W\rightarrow W$ induced from $\pi$ is bijective.
\end{lemma}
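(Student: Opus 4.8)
The plan is to exploit the machinery of the discriminant ideal and the nonempty open set $\frak A\subset\Spec(\cz)$ built just before the statement, and to show that over $\frak A$ (or a suitable dense open subset $W\subset\frak A$) the fiber of $\pi$ is a single point. First I would recall that since $\mathcal{D}(\ggg)=U(\ggg)\otimes_\cz\bbf$ is a finite-dimensional central simple algebra over $\bbf$ (Lemma \ref{lem: no nonzero divisors}), it is a matrix algebra over a division algebra, and the bilinear form $B(u_1,u_2)=\Psi(u_1u_2)$ is a nondegenerate trace form on $\mathcal{D}(\ggg)$; the nonvanishing of a discriminant $\frak D$ at $\varphi$ together with $\varphi(\mathcal C)\neq 0$ means precisely that the specialized algebra $U(\ggg)\otimes_\cz\bk_\varphi$ (where $\bk_\varphi=\cz/\ker\varphi$) carries a nondegenerate trace form, hence is separable, hence a product of matrix algebras over $\bk$. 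In fact, comparing dimensions, for $\varphi\in\frak A$ one expects $U(\ggg)\otimes_\cz\bk_\varphi$ to be exactly $q=l^2$-dimensional and simple, i.e. $\cong \mathrm{Mat}_l(\bk)$; this is the standard Azumaya-locus argument. Consequently $U(\ggg)\otimes_\cz\bk_\varphi$ has a unique irreducible module up to isomorphism, which says exactly that $\pi^{-1}(\varphi)$ is a singleton.

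The steps, in order: (1) identify $\Spec(\cz)$ with algebra maps $\cz\to\bk$ and, for such $\varphi$, recognize $\pi^{-1}(\varphi)$ as the set of iso-classes of irreducible modules of the finite-dimensional $\bk$-algebra $\bar U_\varphi:=U(\ggg)\otimes_\cz\bk_\varphi$; this uses that $U(\ggg)$ is a finitely generated $\cz$-module (Proposition \ref{cev}) so $\bar U_\varphi$ is finite-dimensional and every irreducible $U(\ggg)$-module with central character $\varphi$ is an irreducible $\bar U_\varphi$-module. (2) For $\varphi\in\frak A$, use the element $\mathcal C$ and the module $V_\bbf$ constructed above: the localization argument shows that after inverting $\mathcal C$ the module structure descends, and the nonvanishing of $\varphi(\mathcal C\frak D)$ forces $\bar U_\varphi\otimes_{\bk}\bar{\bk}$ (where $\bar\bk=\bk$ already) to be semisimple with the trace form nondegenerate; counting dimensions via $B$ one gets $\bar U_\varphi\cong\mathrm{Mat}_l(\bk)$. (3) Shrink $\frak A$ if necessary to a dense open $W$ on which this holds uniformly (one may also need to throw away the proper closed locus where some auxiliary denominators from the $z_{ij}$ vanish, but $\frak A$ already builds those in). (4) Conclude $|\pi^{-1}(\varphi)|=1$ for $\varphi\in W$, and that $\pi_W:\pi^{-1}W\to W$ is a bijection — note $\pi$ is surjective onto $\Spec(\cz)$ because every maximal ideal of $\cz$ lies under a maximal ideal of $U(\ggg)$ by integrality, giving at least one irreducible module per central character, and for $\varphi\in W$ there is exactly one.

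The main obstacle I expect is Step (2): making precise that the discriminant condition $\varphi(\mathcal C\frak D)\neq 0$ genuinely forces $\bar U_\varphi$ to be a full matrix algebra of the expected size $l^2$, rather than merely a separable algebra of possibly smaller dimension or a product of several matrix blocks. This requires carefully tracking how the $\bbf$-basis $\{v_1,\dots,v_l\}$ of $V_\bbf$ inside $U(\ggg)v$, the generators $\{d_i\}$ of $U(\ggg)$ over $\cz$, and the central element $\mathcal C$ interact: one wants that after specializing at $\varphi\in\frak A$, the span of the $\mathcal C v_i$ is a faithful $l$-dimensional $\bar U_\varphi$-module and that the trace form $B$ specializes to the genuine matrix trace form, whose nondegeneracy pins down $\dim_\bk\bar U_\varphi=l^2$ and simplicity. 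A secondary subtlety is ensuring $W$ can be taken dense and open — which follows since $\frak A$ is nonempty open in the irreducible variety $\Spec(\cz)$ (irreducible because $\cz$ is a domain by Proposition \ref{cev}), so any nonempty open subset is automatically dense.
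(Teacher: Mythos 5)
Your overall strategy is the same as the paper's: work over the open set $\frak A$ cut out by the discriminant, use the nondegeneracy of the specialized trace form $B_\varphi$ to show the specialized algebra $U_\varphi := U(\ggg)\otimes_{(\cz,\varphi)}\bk$ is semisimple (the Zassenhaus trace-form argument), and then argue $U_\varphi\cong \mathrm{Mat}_l(\bk)$, so that each $\varphi$ has a unique irreducible module lying over it. You also correctly compute $\dim_\bk U_\varphi = q = l^2$ from $\varphi(\frak D)\ne 0$. So far this tracks the paper exactly.

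However, the step you flag as ``the main obstacle'' — passing from ``$U_\varphi$ is semisimple of dimension $l^2$'' to ``$U_\varphi$ is a single $l\times l$ matrix block'' — is in fact a genuine gap in your write-up. ``Counting dimensions via $B$'' does not by itself rule out $U_\varphi$ being a nontrivial product of matrix algebras with the same total dimension $l^2$, and your alternative route (faithfulness of the specialized module span of the $\mathcal C v_i$) is stated as a desideratum rather than proved. The paper closes this gap by a lattice argument that you do not reproduce: it shows that the $l$-dimensional module $V_\bk = V_{\mathcal C}\otimes_{(\cz,\varphi)}\bk$ is in fact a \emph{simple} $U_\varphi$-module. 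Concretely, any nonzero submodule $V_1\subset V_\bk$ is generated by $\Theta^V(R\mathcal C v)$ for some $R\in U(\ggg)$ with nonzero image $f$; the corresponding $\cz$-lattice $\widetilde V_1 = U(\ggg)R\mathcal C v\subset V_{\mathcal C}$ has $\widetilde V_1\otimes_\cz\bbf = V_\bbf$ by irreducibility of $V_\bbf$, and comparison of the two lattices $\widetilde V_1\subset V_{\mathcal C}$ produces a $P\in\cz$ with $\varphi(P)\ne 0$ forcing $V_1 = V_\bk$. Once $V_\bk$ is simple and of dimension $l$, the block of $U_\varphi$ acting faithfully on $V_\bk$ already has dimension $l^2 = \dim_\bk U_\varphi$, so there is only one block and $U_\varphi\cong\mathrm{Mat}_l(\bk)$. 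Without some version of this lattice/simplicity argument, your proof of injectivity of $\pi_W$ is not complete. (Minor point: your remark that surjectivity follows from integrality of a ``lying over'' for maximal ideals is phrased as if $U(\ggg)$ were commutative; the correct statement, which the paper uses implicitly, is just that the nonzero finite-dimensional algebra $U_\varphi$ has at least one simple module.)
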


\begin{proof} We will divide several steps to find a desirable $W$. Maintain the notations as in the arguments before Definition \ref{det}.

We first consider the fiber of $\pi$ over $\varphi$ for $\varphi\in \frak A$.
According to Definition of $\frak A$, there exist $x_1,\cdots,x_q\in
U(\ggg)$ such that $\varphi(\mathcal {C}\frak D)\ne 0$, where
${\frak D}=\Det(B(x_i,x_j)_{q\times q})$. Set
$U_{\varphi}:=U(\ggg)\otimes_{(\cz,\varphi)}\bk$, and
$V_{\bk}=V_{\mathcal {C}}\otimes_{(\cz,\varphi)} \bk$. Here the tensor products are defined over $\cz$, and  $\bk$ is regarded
as a $\cz$-module induced from $\varphi$. Set $\bar
u$ for $u\in U(\ggg)$ to be the image of $u$ under the map
$-\otimes_{(\cz,\varphi)}1: U(\ggg)\rightarrow U_\varphi$.

We then claim that $\bar {x}_i, i=1,\cdots, q$, form a basis of
$U_{\varphi}$.

In fact, by the choice of $x_1,\cdots, x_q$, $\varphi(\frak
D)\ne 0$. So $x_i,i=1,\cdots, q$ are $\bbf$-linear independent
elements in $\mathcal {D}(\ggg)$. Then $x_i,i=1,\cdots, q$ is a
$\bbf$-basis of $\mathcal {D}(\ggg)$. As $B(\cdot,\cdot)$ is non-degenerate, we can take a dual basis $\{y_i,i=1,\cdots,
q\}$ of the basis $\{x_i,i=1,\cdots, q\}$ in $\mathcal
{D}(\ggg)$ with respect to this bilinear form. Thus, ${\frak
D}y_i\in U(\ggg)$. Furthermore, for each $u\in U(\ggg)$,
$${\frak D}u=\sum_{i=1}^qB(\frak {D}u, y_i)x_i.$$
Since $\varphi(\frak D)\ne 0$, and $B(\frak {D}u, y_i)=B(u,\frak
{D}y_i)\in \cz$, we have $\bar u=\sum_{i=1}^q \varphi(\frak
D)^{-1}$ $\varphi(B({\frak D}u,y_i))\bar x_i$. That is to say, $U_\varphi$ is $\bk$-spanned by $\{\bar
x_i,i=1,\cdots,q\}$. Furthermore, $\Det(B_\varphi(\bar
x_i,\bar x_j)_{q\times q})=\varphi(\frak D)\ne 0$ where
$B_{\varphi}$ is $\bk$-valued of $B$ through $\varphi$, $B_{\varphi}$
then naturally becomes a non-degenerate trace form of $U_{\varphi}$
defined by the module $V_\bk=V_{\mathcal {C}}\otimes_{(\cz,\varphi)}\bk$. The claim is proved. Moreover, $\dim_\bk V_\bk=l$.

Next, we claim that $U_\varphi$ is a semisimple algebra, naturally a semisimple superalgebra (cf.
\cite{BK} or \cite{Jo}). Note that $\mathcal{D}(\ggg)$ is a central simple algebra over $\bbf$, as observed first in \S\ref{ccc}. By the same arguments as used in  the proof of \cite[Theorem 5]{Zass}, we can get if $\bar x$ is
a nonzero element in the Jacobson radical of $U_\varphi$, then for
each $\bar u\in U_\varphi$, $B_\varphi(\bar x, \bar u)=0$. The non-degeneracy of $B_\varphi$ ensures that the  Jacobson radical of $U_\varphi$ is trivial. So the claim on the semisimple property of $U_\varphi$ is proved.

Furthermore, we note that if the semisimple superalgebra $U_\varphi$ is simple
as an associative algebra, it has the form $\mbox{Mat}_m(\bk)$. So $\rho:
U(\ggg)\rightarrow \mbox{Mat}_m(\bk)$ is a representation of $U(\ggg)$. And
all irreducible representations of $\ggg$ induced  from $\varphi$ in this way
are isomorphic to $V_\bk$, naturally having the same central character.

Recall that $\dim_\bk U_\varphi=q=l^2$ by the arguments above, and that $\dim_\bk V_\bk=l$ . To ensure that the semisimple superalgebra $U_\varphi$ is simple, it is enough
to ensure that  $V_\bk$ is a simple $U_{\varphi}$-module. For this, we  take an irreducible
submodule  $V_{1}$ of $V_\bk$. Note that $V_{\bk}=U_\varphi v$. We can then take a nonzero element in $V_1$, written as $fv$ with $f\in U_{\varphi}$. Then $V_1=U_\varphi  fv$. There exists $R\in U(\ggg)$,
satisfying $\overline{R}=f$. Observe that for
$\widetilde{V}_{1}:=U(\ggg)R\mathcal {C}v \subset V_{\mathcal {C}}$, $\widetilde{V}:=\widetilde {V_1}\otimes_{\cz} \bbf$ is
a non-trivial submodule of  $\mathcal {D}(\ggg)$ in $V_{\bbf}$. Hence, $\widetilde
V_1$ and $V_{\mathcal{C}}$ are two $\cz$-lattices of
$V_{\bbf}$. Since $V_{\bbf}$ is an irreducible $\mathcal{D}(\ggg)$-module over $\bbf$ and $\widetilde{V}_1\subset V_{\mathcal{C}}$, there must exist
$P\in \cz$ such that $\widetilde{V_{1}}=P
V_{\mathcal{C}}$. Hence
$V_1=\widetilde{V_{1}}\otimes_{(\cz,\varphi)}\bk=PV_{\mathcal{C}}\otimes_{(\cz,\varphi)}\bk$ (note that $\varphi(\mathcal{C})$ is a nonzero number in $\bk$ and $\overline R=f\ne 0$). We get  $\varphi(P)\ne 0$, hence $V_1=V_\bk$. So, $V_\bk$ is a simple $U(\ggg)$-module arising from $\varphi$, which is a unique simple module of the simple superalgebra $U_\varphi$, up to isomorphism, because all simple (super) modules of the central simple algebra $\mathcal{D}(\ggg)$ are isomorphic.
 Therefore, it suffices to take $W$ equal to $\frak A$. Then it is a desirable open set of $\Spec(\cz)$.
\end{proof}

\begin{remark}\label{regt}
That $\mathcal {C}$ appearing in the first paragraph of \S\ref{ccc}  is related to  the
choice of irreducible $\mathcal {D}(\ggg)$-module $V_{\bbf}$.
We can choose appropriate $V_{\bbf}$ such that $\mathcal
{C}=1$. In fact, $U(\ggg)$ itself is a $\cz$-lattice in the regular $\mathcal {D}(\ggg)$-module. Hence the central simple (super-) algebra $\mathcal{D}(\ggg)$ over $\bbf$ can be decomposed into the direct sum of some (left) regular irreducible supermodules as:  $\mathcal {D}(\ggg)=\bigoplus_{i}U_{i}$ (cf. \cite[Proposition 2.11]{Jo}). Then $U(\ggg)\cap U_{i}$ becomes a
$\cz$-lattice. In such a case, $\mathcal {C}=1$.
\end{remark}

\subsubsection{} We need to understand the field extension $\Frac(\cz):\Frac(\cz_p)$.  The following theorem is very important to the latter discussion.

\begin{theorem} (cf. \cite[Theorem 5.1.6]{Sp}\label{sep})
Let $X$ and $Y$ be irreducible varieties and let $\varphi:
X\rightarrow Y$ be a dominant morphism. Put $r=\dim X-\dim Y$,
there exists non-empty open subset $U$ of $X$ with the following
properties:
\begin{itemize}
\item[(1)]
Restricting on $U$, $\varphi$ is open morphism $U\rightarrow Y$.
\item[(2)] {
If $Y^{'}$ is an irreducible closed subvariety of $Y$, $X^{'}$ is
an irreducible component of $\varphi^{-1}(Y^{'})$ intersecting with
$U$.} Then $\dim X^{'}=\dim Y^{'}+r$. In particular,  if $y\in Y$,
any irreducible component of $\varphi^{-1}(y)$ that intersects $U$
has dimensional $r$.
\item[(3)]
If $\bk(X)$ is algebraic over $\bk(Y)$, then for each $x\in U$, the
dimension of the points in the fiber of $\varphi^{-1}(\varphi(x))$
is equal to the separable degree $[\bk(X),\bk(Y)]_{s}$ of $\bk(X)$ over
$\bk(Y)$.
\end{itemize}
\end{theorem}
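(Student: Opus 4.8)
Theorem~\ref{sep} is standard algebraic geometry (it is \cite[Theorem~5.1.6]{Sp}); the plan below indicates the argument, whose single engine is \emph{generic flatness}. Since $\varphi$ is dominant and $X,Y$ are irreducible, the comorphism realizes $\bk(Y)$ as a subfield of $\bk(X)$, and $r$ equals the transcendence degree of $\bk(X)$ over $\bk(Y)$. First I would apply generic flatness to obtain a non-empty open $U_0\subseteq X$ on which $\varphi$ is flat; since a flat morphism of finite type between Noetherian schemes is open, this already yields (1) with $U=U_0$. Every later step only shrinks $U$, so at the end one intersects the finitely many opens produced.

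For (2) the plan is to combine the local dimension formula $\dim_xX=\dim_{\varphi(x)}Y+\dim_x\varphi^{-1}(\varphi(x))$, valid for a flat finite-type morphism, with a preliminary shrinking of $U_0$ to a non-empty open $U_1$ on which, in addition, every fibre has pure dimension $r$; this shrinking is legitimate because $x\mapsto\dim_x\varphi^{-1}(\varphi(x))$ is upper semicontinuous and equals $r$ on the generic fibre (Noether normalization of $\bk(X)$ over $\bk(Y)$). Given an irreducible closed $Y'\subseteq Y$ and a component $X'$ of $\varphi^{-1}(Y')$ meeting $U_1$, I would use that flatness and equidimensionality of fibres are preserved under the base change $Y'\hookrightarrow Y$, and apply the dimension formula at a point $x\in X'\cap U_1$ lying on no other component of $\varphi^{-1}(Y')$: since $Y'$ is irreducible, $\dim_{\varphi(x)}Y'=\dim Y'$, the fibre through $x$ still has dimension $r$, and $\dim_x$ of the set $\varphi^{-1}(Y')$ at $x$ equals $\dim X'$; hence $\dim X'=\dim Y'+r$. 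The final sentence of (2) is the special case $Y'=\{y\}$.

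For (3), under the hypothesis that $\bk(X)/\bk(Y)$ is finite, I would let $E$ be the separable closure of $\bk(Y)$ in $\bk(X)$, so that $[\bk(X):\bk(Y)]_s=[E:\bk(Y)]$ and $\bk(X)/E$ is purely inseparable, and spread this tower of function fields out to a factorization of $\varphi$, over a dense open, through morphisms $\psi\colon X\to X_1$ and $\pi\colon X_1\to Y$ with $\bk(X_1)=E$. After further shrinking I would arrange that $\pi$ is finite \'etale of degree $[E:\bk(Y)]$ (a dominant generically finite morphism with separable residue extension at the generic point is \'etale over a dense open) and that $\psi$ is finite and purely inseparable, hence a universal homeomorphism, so that $\psi$ induces a bijection on the points of each of its fibres. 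Over the resulting non-empty open $U\subseteq X$, for $x\in U$ the fibre $\varphi^{-1}(\varphi(x))$ then has exactly $\deg\pi=[\bk(X):\bk(Y)]_s$ points, where one uses that $\bk$ is algebraically closed so that these are $\bk$-points; this is (3).

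I expect the delicate point to be part (2): ensuring the equality $\dim X'=\dim Y'+r$ holds for \emph{every} irreducible closed $Y'$ and \emph{every} component $X'$ meeting one fixed open $U$, which rests on the stability of ``flat with equidimensional fibres'' under base change by $Y'$ together with a careful local-dimension computation at a point of $X'$ lying on a single component of $\varphi^{-1}(Y')$. In (3), the point needing attention is verifying that the purely inseparable factor $\psi$ really is a fibrewise homeomorphism and that the \'etale degree of $\pi$ has been made constant by shrinking the source, not merely the target.
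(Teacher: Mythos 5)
This statement is not proved in the paper at all: it is quoted verbatim (with some typographical corruption --- ``dimension of the points'' should read ``number of points'') from Springer, \emph{Linear Algebraic Groups}, Theorem 5.1.6, and is used later as a black box in Lemmas \ref{sep1} and \ref{sep2}. So there is no in-paper argument to compare against; what can be said is that your route differs from Springer's. Springer works elementarily with the coordinate rings: after localizing $\bk[Y]$ he applies Noether normalization to write $\bk[X]_f$ as a finite module over a polynomial extension of a localization of $\bk[Y]$, and deduces (1)--(3) from going-down/integrality and a direct count of maximal ideals. You instead run everything through generic flatness, openness of flat finite-type morphisms, the local dimension formula $\dim\mathcal{O}_{X,x}=\dim\mathcal{O}_{Y,y}+\dim\mathcal{O}_{X_y,x}$ for flat local maps, and a generic factorization of $\varphi$ into a radicial morphism followed by an \'etale one. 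Both are standard; yours is shorter if one grants the scheme-theoretic machinery, Springer's is self-contained at the level of finitely generated domains. Your treatment of (2) --- base-change the flatness to $Y'$ and evaluate the local dimension formula at a point of $X'\cap U$ lying on no other component of $\varphi^{-1}(Y')$ (such points form a non-empty open of the irreducible $X'$) --- is the right argument.

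The one place you should be careful is (3), and your closing sentence has the issue backwards. The fiber $\varphi^{-1}(\varphi(x))$ is computed in all of $X$, so an open $U$ obtained only by shrinking the \emph{source} does not control the count: there may be extra points of the fiber lying in $X\setminus U$. The correct move is to shrink the \emph{target}: choose a non-empty open $V\subseteq Y$ over which the factorization $X\to X_1\to Y$ is defined with $X_1\to V$ finite \'etale of degree $[\bk(X):\bk(Y)]_s$ and $X\to X_1$ finite, surjective and radicial, and then take $U=\varphi^{-1}(V)$. With $U$ saturated in this way the entire fiber over $\varphi(x)$ lies in $U$ and the bijection with the \'etale fiber gives exactly $[\bk(X):\bk(Y)]_s$ points. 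This is also the form in which the result is actually applied in Lemma \ref{sep1}, where the full fiber over a regular semisimple $\chi$ is counted. With that correction your outline is sound.
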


\begin{lemma}\label{sep1} Maintain the notations as previously. The separable degree of
$\Frac(\cz)$ over $\Frac(\cz_p)$ is not smaller
than $p^{r}$, $r=\dim\mathfrak{h}$.
\end{lemma}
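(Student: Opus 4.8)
The plan is to turn the lower bound on the separable degree into a count of irreducible $U_\chi(\ggg)$-modules for a sufficiently generic semisimple $p$-character $\chi$, feeding the generic bijectivity of $\pi$ from Lemma~\ref{geb} into the geometric statement of Theorem~\ref{sep}(3).

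First I would record the elementary geometry. The inclusion $\cz_p\hookrightarrow\cz$ is injective and, by Proposition~\ref{cev}(2), $\cz$ is a finitely generated $\cz_p$-module; hence the induced morphism $\psi:\Spec(\cz)\to\Spec(\cz_p)=\ggg_\bz^{*(1)}$ is finite and dominant, so surjective. Moreover $\Spec(\cz)$ is irreducible, since $\cz$ is a domain (Proposition~\ref{cev}), and has the same dimension as $\Spec(\cz_p)$; and $\Frac(\cz)$ is a finite, hence algebraic, extension of $\Frac(\cz_p)$. Consequently, for any proper closed subset $C\subsetneq\Spec(\cz)$ we have $\dim\psi(C)=\dim C<\dim\Spec(\cz_p)$, so $\psi(C)$ is a proper closed subset of $\Spec(\cz_p)$ and its complement is dense open. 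I will use this observation twice.

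Next, apply Theorem~\ref{sep}(3) to $\psi$: there is a dense open $U\subseteq\Spec(\cz)$ such that $\#\psi^{-1}(\psi(x))=[\Frac(\cz):\Frac(\cz_p)]_s$ for all $x\in U$. By Lemma~\ref{geb} there is a dense open $W\subseteq\Spec(\cz)$ over which $\pi:\Spec(U(\ggg))\to\Spec(\cz)$ restricts to a bijection $\pi^{-1}(W)\xrightarrow{\sim}W$. By the observation above, $V:=\bigl(\Spec(\cz_p)\setminus\psi(\Spec(\cz)\setminus U)\bigr)\cap\bigl(\Spec(\cz_p)\setminus\psi(\Spec(\cz)\setminus W)\bigr)$ is dense open in $\Spec(\cz_p)$, as is the locus of points of $\Spec(\cz_p)=\ggg_\bz^{*(1)}$ corresponding to strongly regular semisimple $p$-characters (the Frobenius twist of $G_\ev\cdot\Omega_1$; cf.\ \S\ref{Weylgroups} and \S\ref{sec: froben twist}). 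Choose $\chi$ in the (nonempty) intersection of these two dense open sets. Then $\chi$ is strongly regular semisimple, the fibre $\psi^{-1}(\chi)$ is contained in $U\cap W$, and in particular there is $x\in U$ with $\psi(x)=\chi$.

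Now I would perform the count. The set $\pi^{-1}(\psi^{-1}(\chi))$ is exactly the collection of iso-classes of irreducible $U(\ggg)$-modules whose central character restricts on $\cz_p$ to the point $\chi$, i.e.\ the iso-classes of irreducible $U_\chi(\ggg)$-modules; for $\chi$ strongly regular semisimple this equals $\{[Z_\chi(\lambda)]\mid\lambda\in\Lambda(\chi)\}$ and has precisely $p^{r}$ elements with $r=\dim\hhh$, because $U_\chi(\ggg)$ is semisimple and the baby Verma modules give a complete list of pairwise non-isomorphic irreducibles (Wang--Zhao; see \S\ref{Vermamodules} and \cite[Corollary 5.7]{WZ}). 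Since $\psi^{-1}(\chi)\subseteq W$ and $\pi$ restricts to a bijection over $W$, it restricts to a bijection $\pi^{-1}(\psi^{-1}(\chi))\xrightarrow{\sim}\psi^{-1}(\chi)$, whence $\#\psi^{-1}(\chi)=p^{r}$. Combining this with Theorem~\ref{sep}(3) for the chosen $x\in U$ gives $[\Frac(\cz):\Frac(\cz_p)]_s=\#\psi^{-1}(\psi(x))=\#\psi^{-1}(\chi)=p^{r}\ge p^{r}$, which is the assertion.

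The only genuinely substantive input is the Wang--Zhao description of $U_\chi(\ggg)$ for strongly regular semisimple $\chi$ — its semisimplicity together with the fact that the $p^{r}$ baby Verma modules $Z_\chi(\lambda)$ are pairwise non-isomorphic and exhaust the irreducibles; this is what converts ``fibre of $\psi$'' into ``$p^r$''. Everything else is bookkeeping with dense open subsets, and the one place that needs a little care is that images of proper closed subsets under the finite morphism $\psi$ stay proper closed, which relies on the irreducibility of $\Spec(\cz)$ and is precisely what allows $\chi$ to be chosen so that its whole fibre lies simultaneously in the good loci $U$ (from Theorem~\ref{sep}) and $W$ (from Lemma~\ref{geb}).
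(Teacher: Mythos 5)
Your proposal is correct and follows essentially the same route as the paper's proof: generic bijectivity of $\pi$ from Lemma~\ref{geb}, the Wang--Zhao count of the $p^{r}$ baby Verma modules over a regular semisimple $\chi$, and Theorem~\ref{sep}(3) applied to the finite dominant map $\Spec(\cz)\rightarrow\Spec(\cz_p)$. The only difference is that you justify more carefully (via properness of images of closed sets under the finite morphism) that $\chi$ can be chosen with its whole fibre inside both good open loci, a point the paper passes over by simply intersecting the two open sets.
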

\begin{proof}
By Proposition \ref{cev}, $\cz$ is integral over $\cz_p$. Thus we have a surjective map of irreducible varieties $\phi: \Spec(\cz)\rightarrow \Spec(\cz_p)$, induced from the  inclusion
$\cz_p\hookrightarrow \cz$  (cf. \cite[ChV. \S1. Ex.3]{Sha} for surjective property).
 { Recall that under $\textsf{Ad}$-action, $G$ becomes a subgroup of the restricted automorphism group of $\ggg$, i.e. $\textsf{Ad}(g)$ for $g\in G$ becomes an automorphism of $\ggg$ commutating with the $p$-mapping $[p]$ of $\ggg$ (see \cite[\S I.3.5, \S I.3.19]{Bor} or \cite{SZheng}). So $\textsf{Ad}(G)$  naturally acts on $\Spec(U(\ggg))$, $\Spec(\cz)$ and $\Spec(\cz_p)$, respectively.
 Consider the canonical $G_{\ev}$-equivariant mapping
sequence:
$$\Spec (U(\ggg))\overset{\pi}{\longrightarrow}
\Spec(\cz)\overset{\phi}{\longrightarrow}\Spec(\cz_p),$$
where  $\pi:\Spec(U(\ggg))\rightarrow \Spec(\cz)$ is defined from the central
character associating to an irreducible representation.} By lemma
\ref{geb}, $\pi$ is generically bijective, that is, there exists an
open dense subset  $W$ of $\Spec (\cz)$ such that $\pi:
\pi^{-1}(W)\rightarrow W$ is bijective. Put
$U=\phi^{-1}(G_{\ev}\cdot \Omega)$. Note that $G_\ev\cdot \Omega$ is an open set of ${\ggg_\bz}^*$,  thereby regarded as an open set of $\Spec(\cz_p)$. Hence $U$ becomes an open set of $\Spec(\cz)$. Consider $S=U\cap W$. It is an
open dense subset of $\Spec(\cz)$. For $\chi\in
G_{\ev}\cdot\Omega$, we might as well take $\chi\in \Omega$ without loss of generality. Recall that the set $\{Z_\chi(\lambda)\mid \lambda\in \Lambda(\chi)\}$ is just a complete set of $G_\ev$-equivalent classes of irreducible modules for $U_\chi(\ggg)$ (cf. \cite[Corollary 5.7]{WZ}). The number of the points in $\phi^{-1}(\chi)$ is bigger or
equal to $ p^r$. Since $\phi$ is finite dominant, by Theorem
\ref{sep}(3), the separable degree of $\Frac(\cz)$ over
$\Frac(\cz_p)$ is not smaller than $p^r$.
\end{proof}

Assume $\dim\ggg_{\bar 0}=s$,  $\dim\ggg_{\bar 1}=t$. Then $t$ is even in
our case.

For the convenience of arguments in the sequel, we will continue to introduce the central-valued function $\Theta$ associated to a given pair $(\chi,\lambda)$ for $\mathcal{D}(\ggg)$ and for $U(\ggg)$, as we have done in the arguments used in Lemma  \ref{geb}, where $\chi\in \Omega$ and $\lambda\in  \Lambda(\chi)$.  

Keep the assumption as above for $(\chi,\lambda)$. Let us introduce $\Theta$, which is by definition  the canonical algebra homomorphism from $U(\ggg)$ to
$U(\ggg)\slash(z-\Theta_{(\chi,\lambda)}(z))$, where
$(z-\Theta_{(\chi,\lambda)}(z))$ means the ideal of $U(\ggg)$
generated by $z$ taking all through $\cz$, and
$\Theta_{(\chi,\lambda)}(z)$ is defined by $z\cdot
v_{\lambda}=\Theta_{(\chi,\lambda)}(z)v_{\lambda}$ with $v_{\lambda}$ being the canonical generator of the baby Verma module  $Z_\chi(\lambda)$. {Keep it in mind that $\Theta_{(\chi,\lambda)}(z)$ is actually  the same as the usual Harish-Chandra map (see the proof of Lemma \ref{win}).}  The category of
$\Theta(U(\ggg))$-modules is a subcategory of
$U_{\chi}(\ggg)$-modules.

{
\begin{lemma}\label{lem: central simple} $\Theta(U(\ggg))$ is a central simple superalgebra over $\theta(\cz)$ ($=\bk$).
\end{lemma}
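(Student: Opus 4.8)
The plan is to identify $\Theta(U(\ggg))$ with a reduced enveloping algebra up to a central quotient, and to deduce simplicity from the known irreducibility of baby Verma modules at regular semisimple $p$-characters. First I would observe that by construction the kernel of $\Theta$ contains the ideal $I_\chi$ of \S\ref{reduced}, since $z-\Theta_{(\chi,\lambda)}(z)$ for $z=x^p-x^{[p]}\in\cz_p$ is exactly $x^p-x^{[p]}-\chi(x)^p$; thus $\Theta$ factors through $U_\chi(\ggg)$, and $\Theta(U(\ggg))$ is a quotient of $U_\chi(\ggg)$ by the further relations $z-\Theta_{(\chi,\lambda)}(z)$ for $z\in\cz$. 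Concretely, $\Theta(U(\ggg))$ is the image of $U_\chi(\ggg)$ in $\End_\bk(Z_\chi(\lambda))$ — more precisely, since $z$ acts on $Z_\chi(\lambda)$ by the scalar $\Theta_{(\chi,\lambda)}(z)$ (as noted in the statement, this is the Harish-Chandra value $\lambda(\gamma(z))$, cf. the proof of Lemma \ref{win}), $\Theta(U(\ggg))$ is the block of $U_\chi(\ggg)$ corresponding to the central character $\lambda|_{\cz}$.

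Next I would invoke Wang--Zhao: for $\chi\in\Omega$ regular semisimple, $U_\chi(\ggg)$ is semisimple, and $\{Z_\chi(\mu)\mid \mu\in\Lambda(\chi)\}$ is a complete set of (pairwise non-isomorphic) irreducible $U_\chi(\ggg)$-modules (cf. \cite[Corollary 5.7]{WZ} and the discussion in \S\ref{Vermamodules}--\ref{lem: ann}). Hence $U_\chi(\ggg)\cong\bigoplus_{\mu\in\Lambda(\chi)}\End_\bk(Z_\chi(\mu))$ as a direct sum of (super)matrix algebras, with the center $\cz\cap U_\chi(\ggg)$ acting by the tuple of scalars $(\mu(\gamma(\cdot)))_\mu$. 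Quotienting by the ideal $(z-\Theta_{(\chi,\lambda)}(z))$ kills every summand whose central character differs from that of $\lambda$, so $\Theta(U(\ggg))\cong\End_\bk(Z_\chi(\lambda))$ — a single (super)matrix algebra, hence a central simple superalgebra over $\bk$. Finally I would check that $\theta(\cz)=\bk$: the map $\cz\to\Theta(U(\ggg))$ sends $z\mapsto\Theta_{(\chi,\lambda)}(z)\cdot\id$, which lands in $\bk\cdot\id$ by definition, so the structure algebra is indeed $\bk$, and a matrix algebra $\End_\bk(V)$ (or $\mathrm{Mat}_m(\bk)$ in the purely even sense, or $\mathbb{Q}(m)$-type in the super sense — either way central simple in the appropriate category, cf. \cite{BK}, \cite{Jo}) over $\bk$ is central simple.

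One subtlety I would need to address carefully is whether distinct $\mu\in\Lambda(\chi)$ can share the same central character $\mu|_{\cz}$, i.e. whether the block decomposition of $U_\chi(\ggg)$ by $\cz$-characters is genuinely finer data than needed, or whether $\Theta(U(\ggg))$ might accidentally be a sum of several matrix blocks. Since the statement asserts $\Theta(U(\ggg))$ is central \emph{simple}, one wants exactly one block; this should follow because for $\chi$ generic (in $\Omega_1$, or after shrinking to the strongly regular locus) the Weyl-group action is free and the $p^r$ baby Verma modules $Z_\chi(\mu)$ have pairwise distinct Harish-Chandra central characters — but if the lemma is meant for \emph{all} $\chi\in\Omega$ one must either argue this directly or note that the relevant central-valued function $\Theta_{(\chi,\lambda)}$ already records the full orbit data. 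I expect this point — ensuring the quotient is a single matrix block rather than a semisimple sum — to be the main obstacle, and the natural fix is to carry along the auxiliary element $\mathcal{C}$ and the trace-form/discriminant machinery of Lemma \ref{geb} so that one works over a point of the open dense set $W$ where $\pi$ is bijective, forcing exactly one irreducible module (hence one block) over $\Theta(\cz)$.
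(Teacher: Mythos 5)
Your reduction of $\Theta(U(\ggg))$ to a central quotient of $U_\chi(\ggg)$, and the appeal to Wang--Zhao semisimplicity together with the $p^r$ pairwise non-isomorphic baby Verma modules, match the opening of the paper's proof. But the issue you flag in your last paragraph --- that the quotient by $(z-\Theta_{(\chi,\lambda)}(z))$ could retain several matrix blocks if distinct $\mu\in\Lambda(\chi)$ share the same $\cz$-character --- is precisely the substance of the lemma, and neither of your proposed fixes closes it. Fix (a) asserts that for $\chi\in\Omega_1$ the $p^r$ baby Verma modules have pairwise distinct central characters; nothing available at this point of the paper establishes that (no linkage principle for $\cz$ has been proved), and in any case the lemma is invoked in Proposition \ref{sepc} for an arbitrary fixed $\chi\in\Omega$. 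Fix (b) would require knowing that the particular point of $\Spec(\cz)$ determined by $\Theta_{(\chi,\lambda)}$ lies in the open set $\frak A$ of Lemma \ref{geb}, which is defined by non-vanishing of a discriminant and is not shown to contain these points. So as it stands you have only proved that $\Theta(U(\ggg))$ is semisimple with all simple constituents isomorphic to $\End_\bk(Z_\chi(\lambda))$, not that it is simple.

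The paper closes this gap by a dimension count rather than by separating central characters directly. Writing $U_\chi(\ggg)=\bigoplus_{i=1}^{p^r}I_{\lambda_i}$ with each block a matrix superalgebra of type $\textsf{M}$, the sum $C$ of the (one-dimensional) centers of the blocks has dimension exactly $p^r$. The image $\overline\cz$ of $\cz$ in $U_\chi(\ggg)$ lies inside $C$, and Lemma \ref{sep1} (the separable degree of $\Frac(\cz)$ over $\Frac(\cz_p)$ is at least $p^r$) is used to force $\dim\overline\cz=p^r$, hence $\overline\cz=C$. Thus $\overline\cz$ already separates all $p^r$ blocks, $U_\chi(\ggg)\cong\textsf{M}_{c|d}(\overline\cz)$, and specializing $\overline\cz$ along $\Theta_{(\chi,\lambda)}$ leaves the single block $\textsf{M}_{c|d}(\bk)$. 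Some such argument showing that the image of $\cz$ exhausts the full center of $U_\chi(\ggg)$ (equivalently, that the $p^r$ central characters are pairwise distinct) is the missing ingredient you would need to supply.
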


\begin{proof} By definition, $\Theta(U(\ggg))=U(\ggg)\slash(z-\Theta_{(\chi,\lambda)}(z))\cong U_\chi(\ggg)\slash (\bar z-\Theta_{(\chi,\lambda)}(z))$  with $\chi\in \Omega$ and $\lambda\in \Lambda(\chi)$,  where $\bar z$ is  the image of $z$ in $U_\chi(\ggg)$ for $z$ running through $\cz$. Note that $\chi$ is a regular semisimple $p$-character. By \cite[Corollary 5.7]{WZ}, $U_\chi(\ggg)$ is a semisimple superalgebra, which has $p^r$ different irreducible modules $Z_\chi(\lambda)$, $\lambda \in \Lambda(\chi)=\{\lambda_i\mid i=1,\ldots,p^r\}$. So $U_\chi(\ggg)$ is decomposed into $p^r$ blocks $U_\chi(\ggg)=I_{\lambda_1}\oplus \cdots\oplus I_{\lambda_{p^r}}$. Each $I_{\lambda_i}$ has a unique irreducible module which is isomorphic to $Z_\chi(\lambda_i)$ with dimension $q:=p^{\dim \nnn_\bz^-}2^{\dim \nnn_\bo^-}$. Let $e$ be an indecomposable central  idempotent of $U_\chi(\ggg)$. Then $eU_\chi(\ggg)$ is just a block.  Each block is identical to $eU_\chi(\ggg)$ for some indecomposable central idempotent $e$, which has a central subalgebra $e\overline\cz$ where $\overline\cz$ denotes the image of $\cz$ in $U_\chi(\ggg)$. Such a block $I_\lambda$  for some $\lambda\in \Lambda(\chi)$  is of type $\textsf{M}$ (see \cite{Jo},  the so-called type $\textsf{M}$ just means the usual matrix superalgebra $\textsf{M}_{c|d}$ over $\bk$ with $\dim\textsf{M}_{c|d}=(p^{\dim \nnn_\bz^-}2^{\dim \nnn_\bo^-})^2$).  On the other side, by the semi-simplicity of $U_\chi(\ggg)$ mentioned above, $U_\chi(\ggg)\cong \bigoplus p^r\textsf{M}_{c|d}$ where the coefficient $p^r$ stands for the  direct sum of $p^r$  copies of $\textsf{M}_{c|d}$. Note that $\dim U_\chi(\ggg)=p^{\dim\hhh+2\dim \nnn^-_\bz}2^{2\dim\nnn_\bo^-}$  and then
$$\dim U_\chi(\ggg)\slash \dim\textsf{M}_{c|d}=p^{\dim\ggg_\bz}2^{\dim\ggg_\bo}\slash (p^{\dim \nnn_\bz^-}2^{\dim \nnn_\bo^-})^2=p^r.$$
 Let $C_e$ be the centralizer of $eU_\chi(\ggg)$ in $U_\chi(\ggg)$. Then $C_e$ contains $e\overline\cz$. Hence $C:=\sum_{e}C_e$ with $e$ running indecomposable central idempotents, contains $\overline\cz$. The dimension of $C$ is  $p^r$, which coincides with $\dim \overline\cz$, thanks to Lemma \ref{sep1}.  Hence $U_\chi(\ggg)\cong \textsf{M}_{c|d}(\overline\cz)$. Consequently, $\Theta(U(\ggg))\cong \textsf{M}_{c|d}(\Theta(\cz))$ by the analysis in the beginning of the proof. As to the coincidence between $\Theta(\cz)$ and $\bk$, this is by definition of $\Theta_{(\chi,\lambda)}$ (recall that $\Theta_{(\chi,\lambda)}(z)$ is given by Harish-Chandra map as usually). This completes the proof.
\end{proof}
}

\begin{prop}\label{sepc}  $\Frac(\cz)$
is separable over $\Frac(\cz_p)$, and $[\Frac(\cz): \Frac(\cz_p)]=p^{r}, r=\dim\mathfrak{h}$.
\end{prop}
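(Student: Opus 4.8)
The plan is to establish the proposition by combining the one-sided inequality already available in Lemma~\ref{sep1} with a matching upper bound for the (separable) degree, and then to argue that the extension is actually separable. First I would note that Lemma~\ref{sep1} gives $[\Frac(\cz):\Frac(\cz_p)]_s\geq p^r$, so it suffices to produce the reverse estimate $[\Frac(\cz):\Frac(\cz_p)]\leq p^r$ on the full (not just separable) degree; the two together force separability and the equality. The natural source for the upper bound is the fibre structure of the finite morphism $\phi:\Spec(\cz)\to\Spec(\cz_p)$: over a strongly regular semisimple $\chi\in\Omega_1$ (which form a dense open locus of $\ggg_\bz^{*(1)}$, see \S\ref{Weylgroups}), the algebra $U_\chi(\ggg)$ is semisimple with exactly $p^r$ simple modules $Z_\chi(\lambda)$, $\lambda\in\Lambda(\chi)$ (Wang--Zhao, \cite[Corollary~5.7]{WZ}), so the fibre $\phi^{-1}(\chi)$ has at most $p^r$ points. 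By Theorem~\ref{sep}(3) applied to the dominant morphism $\phi$ of irreducible varieties, for $x$ in a suitable dense open set the number of points in $\phi^{-1}(\phi(x))$ equals the separable degree $[\Frac(\cz):\Frac(\cz_p)]_s$; hence $[\Frac(\cz):\Frac(\cz_p)]_s\leq p^r$, and with Lemma~\ref{sep1} we get $[\Frac(\cz):\Frac(\cz_p)]_s=p^r$.

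Next I would upgrade this to a statement about the full field extension. Here the key input is Lemma~\ref{lem: central simple}: for $\chi\in\Omega$, $\lambda\in\Lambda(\chi)$, we have shown $U_\chi(\ggg)\cong\textsf{M}_{c|d}(\overline\cz)$ with $\overline\cz$ the image of $\cz$ in $U_\chi(\ggg)$, and $\dim_\bk\overline\cz=p^r$. This says precisely that the generic fibre of $\phi$, as a scheme, is $\Spec$ of an algebra of dimension $p^r$ over $\bk$; combined with the count of $p^r$ distinct maximal ideals, the fibre is in fact reduced (étale) at the generic point. Equivalently, localizing $\cz$ at the generic point of $\Spec(\cz_p)$ yields a finite $\Frac(\cz_p)$-algebra of dimension $p^r$ which is a product of $p^r$ copies of $\Frac(\cz_p)$-fields, forcing $[\Frac(\cz):\Frac(\cz_p)]\leq p^r$ with no inseparable contribution. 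Together with the lower bound this gives $[\Frac(\cz):\Frac(\cz_p)]=p^r$ and separability simultaneously.

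Concretely, the argument runs: (i) $\cz$ is a finitely generated, torsion-free $\cz_p$-module of generic rank $n:=[\Frac(\cz):\Frac(\cz_p)]$, so $\cz\otimes_{\cz_p}\Frac(\cz_p)$ is a commutative $\Frac(\cz_p)$-algebra of dimension $n$; (ii) specializing at a strongly regular semisimple $\chi$ via Lemma~\ref{lem: central simple}, one reads off that $\dim_\bk(\cz\otimes_{\cz_p,\chi}\bk)=p^r$, and by semicontinuity of fibre dimension for a finite flat-generic module the generic rank satisfies $n\leq p^r$ (more carefully: $\cz$ is generically free over $\cz_p$ of rank $n$, and at any point the fibre dimension is $\geq n$, so $n\leq p^r$); (iii) the $p^r$ distinct baby Verma central characters in $\phi^{-1}(\chi)$ show the generic fibre is geometrically reduced with $p^r$ points, hence $n=p^r$ and the extension is separable. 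I would also invoke Lemma~\ref{lem: ann} / Corollary~\ref{cor: anticent nonzerodiv} as needed to guarantee the relevant specialization maps behave well, and \cite[Lemma~4.6]{KW} is the analogue on the $p$-center side.

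The main obstacle I anticipate is step (ii)--(iii): carefully justifying that the \emph{generic} behavior of $\phi$ is correctly captured by the \emph{special} fibre over a strongly regular semisimple $\chi$, i.e. transferring the structure result of Lemma~\ref{lem: central simple} (which is stated at a single good point $\chi$) into a statement about $\cz\otimes_{\cz_p}\Frac(\cz_p)$. This requires either a flatness/openness argument (the set of $\chi$ where the fibre has dimension exactly $p^r$ is open and nonempty, hence dense, so it meets the generic point's closure appropriately) or a direct dimension count using that $\cz$ is generically free over the polynomial ring $\cz_p$ together with the explicit $p^r$ from Lemma~\ref{lem: central simple}; once the rank is pinned at $p^r$, separability is immediate from the $p^r$ distinct points in a generic fibre (Theorem~\ref{sep}(3)). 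The rest is bookkeeping with the already-established semisimplicity of $U_\chi(\ggg)$ for regular semisimple $\chi$.
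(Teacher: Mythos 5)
Your proposal takes a genuinely different route from the paper's, but the pivotal step (ii)--(iii) has a gap. You read off from Lemma~\ref{lem: central simple} that $\dim_\bk(\cz\otimes_{\cz_p,\chi}\bk)=p^r$ and then use semicontinuity to conclude the generic rank $n=[\Frac(\cz):\Frac(\cz_p)]$ is at most $p^r$. However, what Lemma~\ref{lem: central simple} actually computes is $\dim_\bk\overline\cz=p^r$, where $\overline\cz$ is the \emph{image} of $\cz$ in $U_\chi(\ggg)$, i.e.\ $\overline\cz=\cz/(\cz\cap\mathfrak{m}_\chi U(\ggg))$. The scheme-theoretic fibre you need is $\cz\otimes_{\cz_p,\chi}\bk=\cz/\mathfrak{m}_\chi\cz$, and the inclusion $\mathfrak{m}_\chi\cz\subseteq\cz\cap\mathfrak{m}_\chi U(\ggg)$ need not be an equality, so the surjection $\cz/\mathfrak{m}_\chi\cz\twoheadrightarrow\overline\cz$ can have a nontrivial kernel. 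Thus Lemma~\ref{lem: central simple} only gives $\dim_\bk(\cz\otimes_{\cz_p,\chi}\bk)\geq p^r$, which points the wrong way for the bound $n\leq p^r$. To close the gap you must show the surjection is an isomorphism for $\chi$ in a dense open set: this follows from generic freeness of the finitely generated $\cz_p$-module $U(\ggg)/\cz$, so that $0\to\cz\to U(\ggg)\to U(\ggg)/\cz\to 0$ stays exact after $\otimes_{\cz_p,\chi}\bk$ and hence $\cz\otimes_{\cz_p,\chi}\bk\cong\overline\cz$; intersecting that open set with $G_\ev\cdot\Omega_1$ then makes the argument go through. The backup tools you mention (Lemma~\ref{lem: ann}, Corollary~\ref{cor: anticent nonzerodiv}) concern annihilators and zero divisors and do not supply this flatness input. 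Your preliminary fibre-point count (``$\phi^{-1}(\chi)$ has at most $p^r$ points'') likewise silently assumes that every point of the fibre arises from an irreducible $U_\chi(\ggg)$-module, which is the content of Lemma~\ref{geb}, though this part is in any case subsumed by (ii)--(iii).

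By contrast, the paper avoids the fibre-flatness question altogether by working at the generic point throughout. It treats $\mathcal{D}(\ggg)=U(\ggg)\otimes_\cz\bbf$ as a central simple superalgebra over $\bbf=\Frac(\cz)$ and establishes the chain $\dim_\bbf\mathcal{D}(\ggg)\geq\dim_\bk\Theta(U(\ggg))\geq\dim_\bk\End_\bk(V_\bk)\geq(\dim_\bk Z_\chi(\lambda))^2=p^{s-r}2^t$, the first inequality via Wall's theorem and a comparison of minimal polynomial degrees (this is where Lemma~\ref{lem: central simple} is actually used), the second by showing $V_\bk$ is irreducible. Combining with $\dim_{\Frac(\cz_p)}\mathcal{D}(\ggg)=p^s2^t$ from the PBW basis over $\cz_p$ gives $[\Frac(\cz):\Frac(\cz_p)]\leq p^r$ directly, and Lemma~\ref{sep1} then forces equality with the separable degree. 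Your geometric approach is salvageable and arguably more transparent once generic freeness of $U(\ggg)/\cz$ is supplied, but as written the key inequality $n\leq p^r$ is not established.
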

\begin{proof}
On one side, we fix $\chi\in \Omega$, a regular semisimple $p$-character. Then, associated with $\chi$, the baby Verma module $Z_\chi(\lambda)$  is an irreducible $U_{\chi}(\ggg)$-module (cf. \cite[Corollary 5.7]{WZ}),
and
\begin{eqnarray}\label{Babydim}\dim Z_\chi(\lambda)=p^{(\dim\ggg_\bz-r)/2}2^{\dim\ggg_\bo/2}=p^{(s-r)/2}2^{t/2}.
\end{eqnarray}
On the other side, we recall that $\mathcal {D}(\ggg)$ is a central simple
$\bbf$-algebra (Lemma \ref{lem: no nonzero divisors}), all simple $\mathcal
{D}(\ggg)$-modules are isomorphic. Still assume that $V_\bbf$ is a simple $\mc{D}(\ggg)$-module with dimension $l$, which  admits a $\cz$-lattice
$V=U(\ggg)v$ (cf. Remark \ref{regt}). We may write $V_\bbf=\mc{D}(\ggg)v$.
Thus, given $\bk$-valued $\Theta$ for
$\cz$ as previously,  we can take $V_{\bk}=\Theta(U(\ggg))v$.
In this way, we actually have had a module-operator $\Theta^{V}: V\rightarrow
V_{\bk}$, satisfying $\Theta^{V}(uw)=\Theta(u)\Theta^{V}(w)$,
$\Theta^{V}(v)=v$, for each $u\in U(\ggg)$, $w\in V$.

\renewcommand{\labelenumi}{(\arabic{enumi})}
Recall that as in Remark \ref{regt}, $\mathcal {D}(\ggg)$ is the sum of
isomorphic simple left modules, $\mathcal
{D}(\ggg)=\sum_{i=1}^{l}V_{i}$, $V_{i}=\mathcal {D}(\ggg)u_{i}$, and
$V_{\bbf}$ is isomorphic to each $V_i$. In the following arguments, we might as well assume $V_{\bbf}=V_{1}$. Thus,
$\Theta(u_1)\neq 0$, and $V_{\bk}=\Theta(U(\ggg)u_1)$. Next we claim the dimension relations as below:

\begin{eqnarray}\label{dim}
\dim_\bbf\mathcal {D}(\ggg)\geq\dim_{\bk}\Theta(U(\ggg))\geq\dim_\bk
\rm {End}_{\bk}(V_\bk)\geq(\dim_{\bk} Z_\chi(\lambda))^2.
\end{eqnarray}

Let us prove it below.  As to the first inequality, we recall again that $\mathcal{D}(\ggg)$ is a central simple (super)algebra over $\bbf$ with center being even. By Wall's theorem (see \cite{Wa} or \cite[Theorem 2.6]{Jo}), $\mathcal{D}(\ggg)$ is isomorphic to $\textsf{M}_{c'|d'}$ for some $c',d'\in\bbn$ with dimension $q=l^2$, which is a usual matrix superalgebra over $\bbf$ with usual matrix product. We can define  the minimal polynomial degree $\wp_\bbf(\mathcal{D}(\ggg))$ of $\mathcal{D}(\ggg)$ over $\bbf$ to be the maxima of the minimal polynomial degrees for all $D\in \mathcal{D}(\ggg)$ over $\bbf$. By linear algebra (see for example, \cite[Ex.XI.3.8, page 254]{Lang}), $\wp_\bbf(\mathcal{D}(\ggg))=l$. Due to Lemma \ref{lem: central simple}, $\Theta(U(\ggg))$ is a central simple superalgebra over $\bk$.
We can similarly define the minimal polynomial degree $\wp_\bk(\Theta(U(\ggg))$ over $\bk$. By the definition of $\Theta$, it is easily known that $\wp_\bk(\Theta(U(\ggg))\leq \wp_\bbf(\mathcal{D}(\ggg))$. Hence $\sqrt{\dim_\bk  \Theta(U(\ggg))}\leq l$. Thus $\dim_\bbf\mathcal {D}(\ggg)\geq\dim_{\bk}\Theta(U(\ggg))$.

As to  the third
inequality, it is because that $V_\bk$ is a $U_\chi(\ggg)$-module, and
$Z_\chi(\lambda)$ is an irreducible $U_\chi(\ggg)$-module, while all
irreducible $U_\chi(\ggg)$-modules have the same dimension. Hence, it
is enough to prove the second  inequality. For this we only need to
verify that $V_\bk$ is an irreducible $\Theta(U(\ggg))$-module. If this is not true, then $V_\bk$ must contain a proper irreducible submodule $W$ which is of the form $\Theta(U(\ggg))\Theta^V(uu_{1})$. {Put $U'=\cald(\ggg)uu_{1}$.
Noticing that in $\cald(\ggg)$,  the left
ideal $U'$ must be a direct sum of some simple left ideals (cf. \cite[Proposition 2.11]{Jo}), we may
assume that $U'$ contains the summand  $\cald(\ggg)u_i$ which naturally satisfies $\Theta(u_i)\ne 0$. Therefore, $W$ contains $\Theta(U(\ggg)u_i)$. On the other side, under the action of $\Theta$ we have that  $\Theta(U(\ggg)u_i)$ is isomorphic to
$V_{\bk}=\Theta(U(\ggg)u_{1})$, as $\Theta(U(\ggg))$-modules. This gives rise to a contradiction:  $V_\bk\supsetneqq W\supset \Theta(U(\ggg)u_i)\cong V_\bk$. This completes the proof of (\ref{dim}).}

Recall $(\dim Z_\chi(\lambda))^2=p^{s-r}2^t$. By (\ref{dim}), we have $\dim_{\bbf}\mathcal {D}(\ggg)\geq p^{s-r}2^t$. {Owing to (\ref{eq: free over Zp}),  $\dim_{\Frac(\cz_p)}\mathcal {D}(\ggg)=p^s 2^t$.
Therefore, $\dim_{\Frac(\cz_p)}\mathcal {D}(\ggg)\slash \dim_{\bbf} \mathcal{D}(\ggg)\leq p^r$.  Note that  $\cald(\ggg)$ can be regarded as $\Frac(\cz_p)$-space and $\Frac(\cz)$-space respectively. Both spaces on $\cald(\ggg)$ are denoted by $\cald(\ggg)_{\Frac(\cz_p)}$ and $\cald(\ggg)_{\Frac(\cz)}$ respectively. Then we have
 $\cald(\ggg)_{\Frac(\cz)}=\cald(\ggg)_{\Frac(\cz_p)}
 \otimes_{\Frac(\cz_p)}\Frac(\cz)$. Consequently, {$\dim_{\Frac(\cz_p)}\cald(\ggg)\geq \dim_\bbf \cald(\ggg)\dim_{\Frac(\cz_p)}\bbf$.} Hence
$$[\Frac(\cz):\Frac(\cz_p)]\leq \dim_{\Frac(\cz_p)}\mathcal {D}(\ggg)\slash \dim_{\bbf} \mathcal
{D}(\ggg)\leq p^r.$$}
 In Combination  with Lemma \ref{sep1}, we have  $[\Frac
(\cz):\Frac(\cz_p)]=p^r=[\Frac(\cz):\Frac(\cz_p)]_s$. Hence, the inequalities in (\ref{dim}) turn out to
 be real equalities.
\end{proof}

\subsubsection{}
For  further arguments, we need some preparation involving (geometric) quotient spaces
 (see \cite{BB}, \cite{Bor} and \cite{MFK} for more details).

\begin{defn}\label{quo} (cf. \cite[{\S}II.6.1]{Bor})
Let $\zeta: V\rightarrow W$ be a $\bk$-morphism of $\bk$-varieties.
We say $\zeta$ is a {\em quotient morphism} {if the following items are satisfied:}
\begin{itemize}
\item[(1)]
$\zeta$ is surjective and open.
\item[(2)]
If $U\subset V$ is open, then $\zeta_{*}$ induces an isomorphism from
$\bk[\zeta(U)]$ onto the set of $f\in \bk[U]$ which are constant on the
fibers of $\zeta|_{U}$.
\end{itemize}
\end{defn}

\begin{defn} (cf. \cite[{\S}II.6.3]{Bor})
We fix a  $\bk$-group $H$ acting $\bk$-morphically on a variety $V$.
\begin{itemize}
\item[(1)] An orbit map is a surjective morphism $\zeta: V\rightarrow W$ of varieties such that the fibers of $\zeta$ are the orbits of $H$ in $V$.
\item[(2)]  {\em
A (geometric) quotient  } of $V$ by $H$ over $\bk$ is an orbit map  $\zeta:
V\rightarrow W$ which is a quotient morphism over $\bk$ in the
sense of Definition \ref{quo}. In this case, $W$ is called {\sl the (geometric) quotient space of $V$
by $H$, denoted by $V/H$}.
\end{itemize}
\end{defn}

Now we have a key lemma for our main theorem.
\begin{lemma}\label{sep2}
$[\Frac(\cz)^{G_{\ev}}:
\Frac(\cz_p)^{G_{\ev}}]=p^{r}$.
\end{lemma}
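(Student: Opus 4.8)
The plan is to transport the ordinary reductive argument of \cite{KW} to the super setting: realize $\Frac(\cz)^{G_\ev}$ and $\Frac(\cz_p)^{G_\ev}$ as function fields of geometric quotients over the strongly regular semisimple locus, count a generic fibre of the induced morphism, and read off the degree via Theorem \ref{sep}.

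First I would fix the geometry. By Proposition \ref{cev} the inclusion $\cz_p\hookrightarrow\cz$ induces a $G_\ev$-equivariant finite dominant morphism $\phi\colon\Spec(\cz)\to\Spec(\cz_p)=\ggg_\bz^{*(1)}$, which by Proposition \ref{sepc} has degree $p^r$ with $\Frac(\cz)$ separable over $\Frac(\cz_p)$; in particular all fibres of $\phi$ over a $G_\ev$-stable dense open subset of $\Spec(\cz_p)$ consist of exactly $p^r$ reduced points. Intersecting that locus with $G_\ev\cdot\Omega_1$ (an open subset of $\ggg_\bz^*$ by \S\ref{Weylgroups}) and applying the existence theory of geometric quotients (cf. \cite{BB}, \cite{Bor}), I obtain $G_\ev$-stable dense opens $V\subseteq\Spec(\cz)$ and $V_0\subseteq\Spec(\cz_p)$ with $\phi(V)\subseteq V_0$, geometric quotients $V/G_\ev$ and $V_0/G_\ev$ (both irreducible of dimension $r$), whose function fields are $\Frac(\cz)^{G_\ev}$ and $\Frac(\cz_p)^{G_\ev}=\Frac(\cz_p^{G_\ev})$ (the last equality by \cite[Lemma 4.6]{KW}), together with a dominant morphism $\bar\phi\colon V/G_\ev\to V_0/G_\ev$ induced by $\phi$.

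Next I would compute a generic fibre of $\bar\phi$. Fix a regular semisimple $p$-character $\chi\in\Omega_1$ lying over $V_0$. Via the invariant form $\chi$ corresponds to a regular semisimple element of $\ggg_\bz$, so $\Stab_{G_\ev}(\chi)$ is reductive with identity component the maximal torus $T$ ($\Lie T=\hhh$), and $\Stab_{G_\ev}(\chi)/T$ maps injectively into $\fw_\chi$, which is trivial because $\chi\in\Omega_1$; hence $\Stab_{G_\ev}(\chi)=T$. For such $\chi$ the fibre $\phi^{-1}(\chi)$ is exactly the set of the $p^r$ pairwise distinct central characters $\Theta_{(\chi,\lambda)}$, $\lambda\in\Lambda(\chi)$, of the baby Verma modules $Z_\chi(\lambda)$ (this is the block decomposition of $U_\chi(\ggg)$ used in the proofs of Lemmas \ref{geb} and \ref{lem: central simple}). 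Since $\Theta_{(\chi,\lambda)}$ is computed by the Harish-Chandra projection, Lemma \ref{win} with trivial Weyl element gives $\gamma\circ\Ad(t)=\gamma$ on $\cz$ for $t\in T$, and as $T$ acts trivially on $\hhh^*$ this forces $t\cdot\Theta_{(\chi,\lambda)}=\Theta_{(\chi,\lambda)}$. Therefore $\Stab_{G_\ev}(\Theta_{(\chi,\lambda)})=T$ and $G_\ev\cdot\Theta_{(\chi,\lambda)}\cap\phi^{-1}(\chi)=\{\Theta_{(\chi,\lambda)}\}$, so $\phi^{-1}(G_\ev\cdot\chi)$ is a disjoint union of exactly $p^r$ orbits, and the fibre of $\bar\phi$ over the image of $\chi$ has exactly $p^r$ points.

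Finally, $\bar\phi$ is a dominant morphism of irreducible varieties of the same dimension $r$ with finite fibres, so $\Frac(\cz)^{G_\ev}$ is finite over $\Frac(\cz_p)^{G_\ev}$ and Theorem \ref{sep}(3) gives $[\Frac(\cz)^{G_\ev}:\Frac(\cz_p)^{G_\ev}]_s=p^r$. This extension is separable, being a subextension of $\Frac(\cz)/\Frac(\cz_p)^{G_\ev}$, which is separable since $\Frac(\cz)/\Frac(\cz_p)$ is (Proposition \ref{sepc}) and $\Frac(\cz_p)/\Frac(\cz_p)^{G_\ev}$ is (the adjoint quotient $\ggg_\bz^{*(1)}\to\ggg_\bz^{*(1)}/G_\ev$ is separable under our hypotheses on $p$, its generic fibre being the smooth orbit $G_\ev/T$). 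Hence $[\Frac(\cz)^{G_\ev}:\Frac(\cz_p)^{G_\ev}]=p^r$. The main obstacle is the orbit-separation step $\Stab_{G_\ev}(\Theta_{(\chi,\lambda)})=T$ over the strongly regular semisimple locus — that each $G_\ev$-orbit meets a fibre of $\phi$ in a single point — which hinges on $\fw_\chi=1$ for $\chi\in\Omega_1$ and on the Harish-Chandra compatibility in Lemma \ref{win}; the geometric-quotient bookkeeping and the separability verification are comparatively routine.
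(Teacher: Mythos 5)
Your argument matches the paper's proof step for step: pass to geometric quotients over the strongly regular semisimple locus, count the generic fibre of the induced orbit-space morphism $\phi_{G_\ev}$, and read off the degree via Theorem \ref{sep}(3); you moreover supply the stabilizer computation and the separability verification that the paper's terse exposition only asserts. One small simplification is available in your stabilizer step: once you know $(G_\ev)_\chi = T$ for $\chi\in\Omega_1$, the fact that $T$ is connected while $\phi^{-1}(\chi)$ is a finite discrete set already forces $T$ to act trivially on the fibre, so the appeal to Lemma \ref{win} and the explicit Harish-Chandra compatibility can be replaced by this softer connectedness observation (which is what the paper implicitly relies on).
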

\begin{proof} The proof is a "super" analogue of \cite[Lemma 4.4]{KW}. We still give a full exposition for the readers' convenience.

By Proposition \ref{sepc}, $\Frac(\cz)$ is a separable
extension over $\Frac(\cz_p)$, and the separable degree is
$p^r$. Consider the surjective $G_{\ev}$-equivariant map of varieties
$\phi: \Spec(\cz)\rightarrow \Spec (\cz_p)$ induced
from the algebra embedding $\cz_p\hookrightarrow
\cz$. Recall that $\Spec(\cz_p)$ can be regarded as
$\ggg_{\bar 0}^{*(1)}$. By Rosenlicht's theorem (cf. \cite{Ros}), $\ggg_{\bar
0}^*$ has a $G_{\ev}$-stable open dense subset $V$ such that the quotient space of $V$ by $G_{\ev}$ can be defined. Since
$V_0:=G_{\ev}\cdot \Omega_1$ is an open set of $\ggg_{\bar 0}^*$,
$U=V\cap V_0$ is a $G_{\ev}$-stable open set of irreducible variety
$\ggg_{\bar 0}^*$. Moreover by the property of geometric quotients, we have
$\bk(U/G_{\ev})=\Frac(\cz_p)^{G_{\ev}}$, where the geometric
quotient $U\slash G_{\ev}$ is the $G_{\ev}$-orbit space of $U$. Set
$W:=\phi^{-1}(U)$, a $G_{\ev}$-stable open set of
$\Spec(\cz)$. Note that the stabilizer $(G_\ev)_\chi$ of $\chi$ in  $G_{\ev}$ acts
trivially on $\phi^{-1}(\chi)$, for $\chi\in U$. Hence, the geometric
quotient $W\slash G_{\ev}$ exists, which then induces naturally a morphism of orbit
spaces
$$\phi_{G_{\ev}}: W\slash G_{\ev}\rightarrow U\slash G_{\ev}.$$
Comparing  Theorem \ref{sep}(3), we know that $\phi_{G_{\ev}}$ is still a
separable morphism with degree $p^r$. The proposition follows.
\end{proof}

Recall a fact for ordinary Lie algebra case that (cf. \cite[Lemma 4.6]{KW})
\begin{align}\label{pcen}
\Frac(\cz_p)^{G_{\ev}}=\Frac({\cz_p}^{G_\ev}).
\end{align}

Then we will have the following super version of (\ref{pcen}).

\begin{prop}\label{zfra} $\Frac(\cz)^{G_{\ev}}=\Frac(\cz^{G_{\ev}})$.
\end{prop}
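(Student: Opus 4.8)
The plan is to establish the non-trivial inclusion $\Frac(\cz)^{G_{\ev}}\subseteq\Frac(\cz^{G_{\ev}})$ (the reverse one being immediate, since a ratio of $G_{\ev}$-invariant central elements is $G_{\ev}$-fixed) by passing to a suitable $G_{\ev}$-invariant localization of the Zassenhaus variety $\Spec(\cz)$ on which the quotient by $G_{\ev}$ is geometric; this follows the strategy of \cite[Lemma 4.6]{KW} and reuses the geometric set-up already built for Lemma \ref{sep2} and \eqref{pcen}. Two structural inputs are used throughout: by Proposition \ref{cev}, $\cz$ is module-finite over the polynomial algebra $\cz_p$, so the induced morphism $\phi\colon\Spec(\cz)\to\Spec(\cz_p)=\ggg_{\bar 0}^{*(1)}$ is finite and surjective; and $G_{\ev}$, being reductive, is geometrically reductive in characteristic $p$ (Haboush's theorem), so by Nagata's theorem $\cz^{G_{\ev}}$ and $\cz_p^{G_{\ev}}$ are finitely generated $\bk$-algebras and categorical (GIT) quotients by $G_{\ev}$ exist and are surjective.

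First I would choose a good localizing element. The (Frobenius twist of the) strongly regular semisimple locus $G_{\ev}\cdot\Omega_1$ is a $G_{\ev}$-stable open subset of $\ggg_{\bar 0}^{*(1)}$ (see \S\ref{Weylgroups}), and a generic fibre of the GIT quotient $q\colon\ggg_{\bar 0}^{*(1)}\to\ggg_{\bar 0}^{*(1)}/\!/G_{\ev}$ is a single closed orbit $G_{\ev}\cdot\chi$ with $\chi$ strongly regular semisimple, hence contained in $G_{\ev}\cdot\Omega_1$. Consequently the $G_{\ev}$-stable, proper closed complement $C:=\ggg_{\bar 0}^{*(1)}\setminus G_{\ev}\cdot\Omega_1$ does not dominate $\ggg_{\bar 0}^{*(1)}/\!/G_{\ev}$, and geometric reductivity furnishes a nonzero $s\in\cz_p^{G_{\ev}}$ whose zero locus contains $C$; equivalently, the principal open set $\Spec(\cz_p[s^{-1}])$ is contained in $G_{\ev}\cdot\Omega_1$.

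Next I would verify that every $G_{\ev}$-orbit in $\Spec(\cz[s^{-1}])=\phi^{-1}(\Spec(\cz_p[s^{-1}]))$ is closed. A point $x$ there lies over a strongly regular semisimple $\chi$; its stabilizer is exactly $(G_{\ev})_\chi$, since $(G_{\ev})_\chi$ already fixes the finite fibre $\phi^{-1}(\chi)$ pointwise (as noted in the proof of Lemma \ref{sep2}), so $\dim G_{\ev}\cdot x=\dim G_{\ev}\cdot\chi$. As $G_{\ev}\cdot\chi$ is closed in $\ggg_{\bar 0}^{*(1)}$ and $\phi$ is finite, $\overline{G_{\ev}\cdot x}\subseteq\phi^{-1}(G_{\ev}\cdot\chi)$, a set of dimension $\dim G_{\ev}\cdot x$; a boundary orbit of $G_{\ev}\cdot x$ would still dominate $G_{\ev}\cdot\chi$, hence have dimension at least $\dim G_{\ev}\cdot\chi=\dim G_{\ev}\cdot x$, contradicting that it lies in the boundary. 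Thus $G_{\ev}\cdot x$ is closed. All orbits in the affine $G_{\ev}$-variety $\Spec(\cz[s^{-1}])$ being closed, and of a single common dimension, the GIT quotient $\Spec(\cz[s^{-1}])\to\Spec(\cz[s^{-1}]^{G_{\ev}})$ separates orbits and is a geometric quotient by standard invariant theory, so its function field equals both $\Frac(\cz[s^{-1}]^{G_{\ev}})$ and $\Frac(\cz[s^{-1}])^{G_{\ev}}$.

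Finally I would remove the localization: since $s$ is $G_{\ev}$-invariant, $\cz[s^{-1}]^{G_{\ev}}=\cz^{G_{\ev}}[s^{-1}]$, and inverting a nonzero element alters neither $\Frac(\cz^{G_{\ev}})$ nor $\Frac(\cz)^{G_{\ev}}$; hence $\Frac(\cz^{G_{\ev}})=\Frac(\cz[s^{-1}]^{G_{\ev}})=\Frac(\cz[s^{-1}])^{G_{\ev}}=\Frac(\cz)^{G_{\ev}}$. The step I expect to be the real obstacle is the geometric core of the third paragraph: proving that $G_{\ev}$-orbits over the strongly regular semisimple locus remain closed inside the possibly non-normal Zassenhaus variety $\Spec(\cz)$ with $\phi$ only finite, and legitimately upgrading the categorical quotient to a geometric one in characteristic $p$ through geometric reductivity; by contrast, the production of $s$ and the closing fraction-field bookkeeping are routine once the generic fibres of the coadjoint GIT quotient are identified with regular semisimple orbits.
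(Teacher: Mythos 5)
Your proof is correct, but it locates the geometric-quotient locus by a genuinely different mechanism than the paper does. Both arguments ultimately reduce the proposition to exhibiting a dense $G_{\ev}$-stable open subset of $\Spec(\cz)$ on which the affine quotient by $G_{\ev}$ is geometric, and then passing to function fields via \cite[Proposition II.6.5]{Bor}; the difference is in how that open set is found. The paper invokes the abstract framework: by \cite[Proposition 1.9]{MFK} there is a \emph{maximal} $G_{\ev}$-stable open $X'\subset X=\Spec(\cz)$ on which $\zeta$ is a geometric quotient, and nonemptiness of $X'$ is established by producing a single closed $G_{\ev}$-orbit and applying \cite[Theorem 6.1.1(3)]{BB}. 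Crucially, the closed orbit the paper exhibits lies over $\chi=0$: restricted central characters $\pi(L_0(\lambda))$, $\lambda\in\bbf_p^r$, form a finite (hence closed) $G_{\ev}$-stable set. You instead localize to an explicit principal open $\Spec(\cz[s^{-1}])=\phi^{-1}(D(s))$ sitting over the strongly regular semisimple locus $G_{\ev}\cdot\Omega_1$, and prove by hand that \emph{every} orbit there is closed, via the dimension count forced by finiteness of $\phi$ and by the fact (already used in the proof of Lemma \ref{sep2}) that $(G_{\ev})_\chi$ fixes $\phi^{-1}(\chi)$ pointwise; you then conclude the affine quotient is geometric on that localization. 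Your route avoids citing \cite[Theorem 6.1.1(3)]{BB}, is more self-contained in its orbit geometry, and usefully unifies the argument with the regular-semisimple apparatus of Lemma \ref{sep2}; the paper's route avoids the orbit-by-orbit closedness verification by leaning on the MFK/BB machinery and finds its closed orbit in the opposite (nilpotent, $\chi=0$) corner of the Zassenhaus variety. The final step — that inverting the $G_{\ev}$-invariant nonzero-divisor $s$ changes neither $\Frac(\cz^{G_{\ev}})$ nor $\Frac(\cz)^{G_{\ev}}$, and that $\cz[s^{-1}]^{G_{\ev}}=\cz^{G_{\ev}}[s^{-1}]$ because $\cz$ is a domain — is sound.
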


Before the proof, we need the following result.

\renewcommand{\labelenumi}{(\roman{enumi})}
\begin{theorem}(cf. \cite[I.5.3]{BB}) \label{afq}
Let $X$ be an affine algebraic variety with an action of an affine
reductive algebraic group. Then there exists a unique (up to an
isomorphism) {affine} algebraic variety $Y$ such that $\bk[Y]=\bk[X]^{G}$, The
variety $Y$ is called the affine quotient space of $X$ by the action
of $G$. The inclusion $\bk[X]^{G}\subset \bk[X]$ induces a morphism
$\zeta: X\rightarrow Y$. The morphism $\zeta$ is called \sl{the
affine quotient morphism }of $X$ by the action of $G$.
\end{theorem}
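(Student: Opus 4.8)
The plan is to deduce this from Hilbert's finiteness theorem (in the form valid for reductive groups in arbitrary characteristic) together with the standard contravariant equivalence between affine algebraic varieties over $\bk$ and finitely generated reduced commutative $\bk$-algebras. The statement has two assertions — existence of $Y$ with the prescribed coordinate ring, and its uniqueness up to isomorphism — and both reduce to identifying $Y$ with $\Specm$ of the invariant ring.

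First I would record the formal part. Since $X$ is an affine algebraic variety, its coordinate ring $\bk[X]$ is a finitely generated reduced commutative $\bk$-algebra on which $G$ acts by $\bk$-algebra automorphisms; the invariant subalgebra $\bk[X]^{G}$ is then automatically reduced, being a subring of a reduced ring. The one substantive point is that $\bk[X]^{G}$ is again finitely generated over $\bk$: this is Hilbert's fourteenth problem in this setting, answered affirmatively for reductive $G$. Over fields of characteristic zero it goes back to Hilbert and Weyl via complete reducibility; in positive characteristic it follows from the geometric reductivity of reductive groups (Haboush's theorem) combined with Nagata's argument that a geometrically reductive group has finitely generated invariants on any finitely generated algebra. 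Granting this, set $Y:=\Specm(\bk[X]^{G})$; then $Y$ is an affine algebraic variety and $\bk[Y]=\bk[X]^{G}$ by construction.

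Next, the inclusion of $\bk$-algebras $\bk[X]^{G}\hookrightarrow \bk[X]$ corresponds, under the contravariant equivalence of categories, to a morphism of affine varieties $\zeta:X\rightarrow Y$ whose comorphism $\zeta^{*}:\bk[Y]\rightarrow \bk[X]$ is exactly this inclusion; this is the asserted affine quotient morphism. For uniqueness: if $Y'$ is any affine algebraic variety equipped with an isomorphism $\bk[Y']\xrightarrow{\sim}\bk[X]^{G}$ of $\bk$-algebras, then applying the equivalence to this isomorphism produces an isomorphism of varieties $Y'\xrightarrow{\sim} Y$ compatible with the structure maps, so $Y$ is unique up to a (unique) isomorphism.

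The main obstacle is the finite generation of $\bk[X]^{G}$; the remaining steps are formal. In the situations where this theorem is applied here — $G=G_{\ev}$, a connected reductive algebraic group, acting on affine varieties such as $\Spec(\cz)$, $\Spec(\cz_p)$, or $\ggg_\bz^{*}$ — this input is available precisely because $G_{\ev}$ is reductive. It is worth noting, although the statement as worded does not assert it, that for $\zeta$ to behave as a genuine quotient (surjectivity, separation of closed orbits, $\bk[Y]$ being the ring of functions constant on fibers over a suitable open set) one again uses reductivity in an essential way; those finer properties are invoked elsewhere in the paper, but the bare existence–uniqueness statement above needs only the finite generation of the invariant ring and the equivalence of categories.
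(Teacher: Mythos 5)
Your argument is correct and is exactly the standard one: the paper does not prove this statement but simply cites it from \cite[I.5.3]{BB}, and the content there is precisely the Hilbert--Nagata--Haboush finite generation of $\bk[X]^{G}$ for reductive $G$ combined with the contravariant equivalence between affine varieties and finitely generated reduced $\bk$-algebras. Your closing remark is also apt: the finer quotient properties (submersiveness, separation of closed orbits) are what the paper actually uses later via \cite[Theorem A.1.1]{MFK} and \cite[Theorem I.5.4]{BB}, and those do require reductivity beyond the bare existence statement here.
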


\noindent\textbf{Proof of Proposition \ref{zfra}}: We proceed the proof by steps.

(1) By the Hilbert-Nagata Theorem, $\cz^{G_\ev}$ is finitely generated (cf. \cite[Theorem A.1.0]{MFK}).
Set
$X=\Spec(\cz)$, $Y=\Spec(\cz^{G_{\ev}})$, denote by
$\zeta: X\rightarrow Y$, the morphism of irreducible varieties
induced from the inclusion $\cz^{G_{\ev}}\hookrightarrow
\cz$. By  Theorem \ref{afq}, $\zeta$ satisfies the property of
affine quotient map. Then $\zeta $ is  submersive (cf. \cite[Theorem
A.1.1]{MFK} or \cite[Theorem I.5.4]{BB}). Moreover, by
\cite[Proposition 1.9]{MFK}, there exists a maximal $G_{\ev}$-stable
open subset $X^{'}$ of $X$ such that  $\zeta$ restricting on $X^{'}$ is
geometric quotient map.

{ (2) We claim that such $X'$ is nonempty. For getting it, we observe that the points of $X$ are described by $\ker(\Upsilon)$ for the central characters $\Upsilon$ of $U(\ggg)$ which are algebra homomorphisms $\Upsilon: \cz\rightarrow \bk$.  Associated any irreducible $U(\ggg)$-module $L$, one has a unique central character $\Upsilon_L$. Recall that for any given irreducible module $L$, it admits a unique $\chi\in\ggg^*_\bz$ such that $L$ is an irreducible $U_\chi(\ggg)$-module (see \S\ref{reduced}).

Keep the notations as in \S\ref{purelyevenarguments}.
Consider $\chi=0$. The corresponding irreducible modules of $U(\ggg)$ are just restricted irreducible modules which  are parameterized by finite weights $\lambda$ with $\lambda\in \hhh^*$ satisfying $\lambda(H_i)^p=\lambda(H_i)$ for $i=1,\cdots,r$. Such $\lambda$ can be regarded elements in $\bbf_p^r$.
This is to say,
the isomorphism classes of irreducible modules of $U_\chi(\ggg)$ are finite, which are presented by some $\{L_0(\lambda)\mid \lambda\in\bbf_p^r\}$. Here
$L_0(\lambda)$ is a unique irreducible quotient of the baby Verma module $Z_0(\lambda):=U_0(\ggg)\otimes_{U_0(\bbb)}\bk_\lambda$
(see for example,  \cite[\S4.1.2]{PS}).

On the other hand, $\cz$ is $G_\ev$-stable under  $\textsf{Ad}$-action. Consequently, $X$ is a $G_\ev$-scheme. Note that $\textsf{Ad}(G_\ev)$-action preserves the $p$-structure of $\ggg$, i.e. $\textsf{Ad}(g)$ commutes with the $p$-mapping $[p]$ (see \cite[\S I.3.5, \S I.3.19]{Bor} or  \cite{SZheng}). Hence under $\textsf{Ad}(G_\ev)$-action, the isomorphism class of restricted irreducible modules are preserved, i.e. those form a $G_\ev$-stable subset of $\Spec(U(\ggg))$  by the same arguments in  the proof of Lemma \ref{sep1}.  Furthermore,  the $G$-orbit of $x:=\pi(L_0(\lambda_i))$ in $X=\Spec(\cz)$ must be finite as mentioned just above. Consider the structure sheaf  $\scrl$ of $X$ which can be regarded a $G_\ev$-linearized line bundle over $X$, i.e. $G_\ev$-action on $\scrl$ agrees with the $G$-action on $X$.
Obviously there exists a $G$-invariant section $s$ of $\scrl$ arising from the invariant ring $\cz^{G_\ev}$ such that $x\in \text{supp}(s)$. Note that $G_\ev$ is a connected reductive algebraic group, and that the orbit $G_\ev.x$ is finite, consequently, closed in $\text{supp}(s)$.  From  \cite[Theorem 6.1.1(3)]{BB} (or \cite[\S4]{MFK}), it follows that $X'$ is a nonempty open subset of $X$.
}

(3)
Let $Y^{'}=\zeta(X^{'})$. Then
$X^{'}=\zeta^{-1}(Y^{'})$ (cf. \cite[Theorem A.1.1]{MFK} or
\cite[Theorem  I.5.4]{BB}). Hence $Y^{'}$ is an open set of $Y$. From the irreducibility of $X, Y$, it follows that
$\Frac(\bk[X])=\Frac(\bk[X^{'}])$ and $\Frac(\bk[Y])=\Frac(\bk[Y^{'}])$.
Again by \cite[Proposition II.6.5]{Bor}, $\Frac(\bk[X^{'}])^{G_{\ev}}\cong
\Frac(\bk[Y^{'}])$. Therefore, $\Frac(\cz)^{G_{\ev}}=\Frac(\cz^{G_{\ev}})$.
We complete the proof.

\subsubsection{}\label{sec: end proof}
\noindent \textbf{Proof of Theorem \ref{secondtheorem}}(1): According to \cite[Lemma 5.4]{KW} and its proof,
we know that $U(\frak h)^{\fw}$ is
integral over $(U(\frak h)\cap\cz_p) ^{\fw}$,
$[\Frac(U(\frak h)^{\fw}):\Frac((U(\frak h)\cap\cz_p)
^{\fw})]\leq p^{r}$, and that $\gamma({\cz_p}^{G_\ev})=(U(\frak
h)\cap\cz_p) ^{\fw}$. Taking  Proposition
\ref{inj} into account, we consider the following computation:
\begin{eqnarray*}
&&[\Frac(U(\frak h)^{\fw}): \Frac((U(\frak h)\cap\cz_p)
^{\fw})]\\
&=&[\Frac(U(\frak
h)^{\fw}): \Frac(\gamma({\cz_p}^{G_\ev}))]\\
&=&[\Frac(U(\frak h)^{\fw}): \Frac(\gamma(\cz^{G_{\ev}}))][\Frac(\gamma(\cz^{G_{\ev}})): \Frac(\gamma({\cz_p}^{G_\ev}))]\\
&=&[\Frac(U(\frak h)^{\fw}):\Frac(\gamma(\cz^{G_{\ev}}))][\Frac(\cz^{G_{\ev}}):\Frac({\cz_p}^{G_\ev})].
\end{eqnarray*}
By Lemma \ref{pcen}, Proposition \ref{zfra}, and Lemma
\ref{sep2}, we have
$$[\Frac(\cz^{G_{\ev}}):\Frac({\cz_p}^{G_\ev})]=
[\Frac(\cz)^{G_{\ev}}:\Frac(\cz_p)^{G_{\ev}}]=p^{r}.$$
Hence it follows  that $\Frac(U(\frak
h)^{\fw})=\Frac(\gamma(\cz^{G_{\ev}}))$. Thus we complete the proof.

\begin{remark} We remind the readers to compare the result with the one for the complex Lie superalgebras (cf. \cite[Lemmas 1-2]{Kac3}).
\end{remark}

\vskip0.5cm
\noindent \textbf{Proof of Theorem \ref{secondtheorem}}(2): Consider the inclusion
$\cz_p\hookrightarrow\widetilde\cz\hookrightarrow\cz$,
and $\cz^{G_{\ev}}\hookrightarrow\widetilde\cz
\hookrightarrow\cz$. There is a commutative diagram of
dominant morphism of irreducible varieties
$$
\begin{array}{c}
\xymatrix{
&&\text{Spec}(\cz_p)\\
\text{Spec}(\cz)\ar[r]^{\varphi_{1}}\ar[drr]_{f_{1}}\ar[urr]^{f_{3}}
&\text{Spec}(\widetilde\cz)\ar[ur]_{\varphi_{2}}
\ar[dr]^{f_{2}}&\\
&&\text{Spec}(\cz^{G_{\ev}}). }
\end{array}
$$
By the proof of Proposition \ref{zfra}, there exist a
$G_{\ev}$-stable nonempty open subset $V$ of $\Spec(\cz)$, and a
$G_{\ev}$-stable nonempty open subset $W$ of
$\text{Spec}(\cz^{G_{\ev}})$ such that $f_{1}|_{V}:
V\rightarrow W$ is a geometric quotient map. So, for a $w\in W$, all elements of
$f_{1}^{-1}(w)$ fall in the same $G_{\ev}$-orbit. Recall that $G_{\ev}\cdot
\Omega_{1}$ contains a $G_{\ev}$-stable   open  dense subset of
$\text{Spec}(\cz_p)$, say $U_1$.  {Set $U=\varphi_2^{-1}(U_1)\cap f_{2}^{-1}(W)$.} By the irreducibility of
$\text{Spec}(\widetilde\cz)$ and of $\Spec(\cz^{G_{\ev}})$, and by the dominance of $f_{2}$, we have that $U$ is a
$G_{\ev}$-stable  open  dense subset of $\text{Spec}(\widetilde\cz)$.

Take $x\in U$. By Proposition \ref{cev}$(2)$ and Proposition
\ref{sepc}, both $\varphi_{1}$ and $\varphi_{2}$ are finite separable
morphisms. We assume that  $\varphi^{-1}_{1}(x)=\{x_{1},\cdots,x_{d}\}$. We assert that $d=1$. Actually, by the choice of $x$ there
exists $w\in W$ such that $f_{2}(x)=w$. Then
$\varphi^{-1}_{1}(x)\subseteq\varphi^{-1}_{1}(f_{2}^{-1}(w))\subseteq
(f_{2}\circ \varphi_{1})^{-1}(w)=f_{1}^{-1}(w)$. According to the
above arguments and the property of geometric quotient,
$x_{1},\cdots,x_{t}$ are contained in one orbit. Now
$\cz^{G_{\ev}}\subset \widetilde\cz$, hence the stabilizer $(G_{\ev})_{x}$ of $x$ in $G_\ev$ permutes them transitively.
Set $y=\varphi_{2}(x)$. Then $y\in G_{\ev}\cdot \Omega_{1}$, and
$(G_{\ev})_{x}\subseteq(G_{\ev})_{y}$.  As  $\textbf{Z}_\fw(\hhh^*)=\textbf{Z}_\fw(\hhh)=1=\fw_\chi$ for $\chi\in \Omega_1$ (see \S\ref{Weylgroups}), 
$(G_{\ev})_{y}$ acts trivially on the fiber over $y$ of
$(\varphi_{2}\varphi_{1})^{-1}$,  the same is true for
$(G_{\ev})_{x}$. Hence $d=1$ as we asserted. Furthermore, since a
geometric quotient map is  submersive, two open sets
$\varphi_1^{-1}(U)$ and $U$ are isomorphic. Therefore, we get that
$\Spec(\cz)$ and $\Spec(\widetilde\cz)$ are birational
equivalent. The proof  is completed.

\section{Birational equivalence of the Zassenhaus varieties and their rationality}\label{sec: Slodowy slices}

Keep the notations and assumptions as before. In particular, $\ggg=\ggg_\bz\oplus\ggg_\bo$  is a basic classical Lie superalgebras as list in \S\ref{tab: basic cla}. The subalgebra $\widetilde{\cz}$ of $\cz=\text{Cent}(U(\ggg))$ is generated  the $p$-center  $\cz_p$ and the Harish-Chandra center $\cz_\hc=\cz^{G_\ev}$.

Besides the assumption on $p$ as in \S\ref{tab: basic cla}, we assume that  the purely-even reductive group $G_\bz$  satisfies the following standard hypotheses on $\bG$.

\begin{hypothesis}\label{hyp: standard}  (see \cite{Jan}, \cite{BGo}) The following hypotheses are standard for an connected reductive algebraic group $\bG$ and its Lie algebra $\g$ with $\Lie(\bG)=\g$.
\begin{itemize}
\item[(A)] the derived group $D\bG$ of $\bG$ is simply-connected;
\item[(B)] $p$ a good prime for $\bG$;
\item[(C)] $\g$ has a non-degenerate $\bG$-invariant bilinear form.
\end{itemize}
\end{hypothesis}

\subsection{} Recall $\ggg_\bz$ is a reductive Lie algebra. The center $\frakZ$ of $U(\ggg_\bz)$ can be described as below.

\begin{theorem} (Veldkamp's theorem, \cite{Ve}, \cite{KW}, \cite{MR}, \cite{BGo}, {\sl etc.})\label{thm: Vel}
Let $\frakZ_p$ denote the $p$-center of $U(\ggg_\bz)$ and $\frakZ_\hc$ denote the Harish-Chandra center of $U(\ggg_\bz)$, i.e.  $\frakZ_\hc=U(\ggg_\bz)^{G_\ev}$. Then $\frakZ$ is generated by $\frakZ_p$ and $\frakZ_\hc$. More precisely,
$$\frakZ\cong \frakZ_p\otimes_{\frakZ_p^{G_\ev}}\frakZ_\hc.$$
\end{theorem}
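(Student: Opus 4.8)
This is Veldkamp's theorem, and the plan is to follow the classical route via the Harish--Chandra homomorphism, as in \cite{Ve, KW, MR, BGo}. Write $N=\dim\ggg_\bz$, $r=\dim\hhh$ and $d=\#\Delta_0^+=(N-r)/2$, and recall that $U(\ggg_\bz)$ is free of rank $p^{N}$ over the polynomial ring $\frakZ_p\cong\bk[\ggg_\bz^{*(1)}]$. Let $\gamma\colon U(\ggg_\bz)\to U(\hhh)=\bk[\hhh^{*}]$ be the $\rho$-shifted Harish--Chandra map, as in \S\ref{sec: harish-chandra}. The three ingredients I would assemble are: (i) a Harish--Chandra isomorphism $\frakZ_\hc\xrightarrow{\ \sim\ }U(\hhh)^{\fw}=S(\hhh)^{\fw}$; (ii) an identification of the overlap $\frakZ_p\cap\frakZ_\hc=\frakZ_p^{G_\ev}$; and (iii) a rank count showing $\frakZ$ is free of rank $p^{r}$ over $\frakZ_p$.

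For (i) I would run the eigenvalue computation on baby Verma modules $Z_\chi(\lambda)$ for $\ggg_\bz$ exactly as in the proof of Lemma~\ref{win}: $\gamma$ restricts to an injective algebra map $\frakZ_\hc=U(\ggg_\bz)^{G_\ev}\hookrightarrow U(\hhh)$ landing in $U(\hhh)^{\fw}$, and surjectivity onto $S(\hhh)^{\fw}$ is the classical Harish--Chandra statement under the standard hypotheses; note (A)--(C) also force, via Chevalley--Shephard--Todd, that $S(\hhh)^{\fw}$ is a polynomial ring, i.e. $\hhh^{*}/\fw\cong\mathbb{A}^{r}$. For (ii): $\frakZ_p$ is central, so $\frakZ_p\cap\frakZ_\hc=\frakZ_p^{G_\ev}$; since $\frakZ_p\cong\bk[\ggg_\bz^{*(1)}]$ and $p$ is good with $D\bG$ simply connected, the Chevalley restriction theorem gives $\ggg_\bz^{*(1)}/\!/G_\ev\cong\hhh^{*(1)}/\fw$, and combined with (i) this says precisely $\gamma(\frakZ_p^{G_\ev})=(S(\hhh)^{[p]})^{\fw}\subset S(\hhh)^{\fw}$, cf.\ \cite[Lemma 4.6]{KW}. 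Geometrically $\Spec\frakZ_\hc=\hhh^{*}/\fw$ lies over $\Spec\frakZ_p^{G_\ev}=\hhh^{*(1)}/\fw$ through the $\fw$-quotient of the separable (Artin--Schreier type) isogeny $\hhh^{*}\to\hhh^{*(1)}$, $h\mapsto h^{p}-h^{[p]}$; this map is finite and generically $p^{r}$-to-one between two smooth spaces, hence finite flat of degree $p^{r}$ by miracle flatness, so $\frakZ_\hc$ is free of rank $p^{r}$ over $\frakZ_p^{G_\ev}$.

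Now set $\frakZ'':=\frakZ_p\otimes_{\frakZ_p^{G_\ev}}\frakZ_\hc$; by base change $\frakZ''$ is free of rank $p^{r}$ over $\frakZ_p$, and $\Spec\frakZ''=\ggg_\bz^{*(1)}\times_{\hhh^{*(1)}/\fw}(\hhh^{*}/\fw)$, the first factor mapping by the adjoint quotient, which is flat by Kostant's theorem in good characteristic. There is a natural algebra homomorphism $\frakZ''\to\frakZ$, well defined because $\frakZ$ is commutative and $\frakZ_p,\frakZ_\hc$ agree on $\frakZ_p^{G_\ev}$, with image the subalgebra $\frakZ'$ generated by $\frakZ_p$ and $\frakZ_\hc$; since $\frakZ'\supseteq\frakZ_p$ has Krull dimension $N=\dim\frakZ''$ and $\frakZ''$ is equidimensional, the kernel sits inside a minimal prime and hence is $0$ once $\frakZ''$ is known to be a domain, so $\frakZ'\cong\frakZ''$ is free of rank $p^{r}$ over $\frakZ_p$. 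For (iii): over the open locus of regular semisimple $p$-characters one has $U_\chi(\ggg_\bz)\cong\bigoplus^{p^{r}}\mathrm{Mat}_{p^{d}}(\bk)$, with centre of dimension $p^{r}$ --- the $p^{r}$ baby Vermas $Z_\chi(\lambda)$ have pairwise distinct central characters, since for $\chi$ strongly regular $\Lambda(\chi)\cap w\Lambda(\chi)=\emptyset$ for $w\ne\id$ --- so, exactly as in the argument of Proposition~\ref{sepc} applied to $\ggg_\bz$, $[\Frac(\frakZ):\Frac(\frakZ_p)]=p^{r}$ and $\frakZ$ is generically free of rank $p^{r}$ over $\frakZ_p$. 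Hence $\frakZ'\subseteq\frakZ$ is an inclusion of finite $\frakZ_p$-modules of the same generic rank; since $\Spec\frakZ$ is normal (Zassenhaus, cf.\ \cite{Zass}) while $\frakZ'\cong\frakZ''$ is Cohen--Macaulay and regular in codimension one (being finite flat over the regular ring $\frakZ_p$ and smooth over the regular semisimple locus), the inclusion must be an equality, giving $\frakZ\cong\frakZ_p\otimes_{\frakZ_p^{G_\ev}}\frakZ_\hc$.

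The main obstacle is concentrated in the last two steps: one needs that the fibre product $\ggg_\bz^{*(1)}\times_{\hhh^{*(1)}/\fw}(\hhh^{*}/\fw)$ is irreducible and normal, and that the resulting variety really is the whole Zassenhaus variety $\Spec\frakZ$ rather than a proper closed piece of it --- equivalently, that adjoining the Harish--Chandra centre to the $p$-centre creates no relations beyond those already imposed by $\frakZ_p^{G_\ev}$, yet still exhausts $\frakZ$ in codimension one. This is exactly where the flatness of the adjoint quotient in good characteristic and the normality of the Zassenhaus variety enter essentially, and it is the technical heart of Veldkamp's theorem; for the complete arguments I refer to \cite{Ve, KW, MR, BGo}.
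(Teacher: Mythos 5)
The paper does not prove this statement: it records Veldkamp's theorem as a known external result, citing \cite{Ve}, \cite{KW}, \cite{MR}, \cite{BGo}, and immediately moves on to use it in Proposition~\ref{prop: 3.2}. So there is no in-paper proof to compare against. Your sketch correctly reproduces the standard strategy of those references --- Harish--Chandra isomorphism onto $S(\hhh)^{\fw}$, identification $\frakZ_p\cap\frakZ_\hc=\frakZ_p^{G_\ev}$ with the Chevalley restriction picture, the generic rank count $[\Frac(\frakZ):\Frac(\frakZ_p)]=p^r$ via semisimplicity of $U_\chi(\ggg_\bz)$ at regular semisimple $\chi$, and finally an integrality--normality argument to upgrade the subalgebra $\frakZ'\cong\frakZ_p\otimes_{\frakZ_p^{G_\ev}}\frakZ_\hc$ to all of $\frakZ$ --- and you appropriately delegate the technical core (flatness of the adjoint quotient in good characteristic, normality of the fibre product) to the literature, which matches the paper's own treatment of this theorem as imported.

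One point worth tightening: in the final step, the normality that actually closes the argument is that of $\frakZ'\cong\frakZ''$ (Cohen--Macaulay and $R_1$, hence normal by Serre's criterion); since $\frakZ$ is integral over $\frakZ'$ with the same fraction field, normality of $\frakZ'$ alone forces $\frakZ'=\frakZ$. The invocation of ``$\Spec\frakZ$ is normal (Zassenhaus)'' is not what does the work here, and in the generality of reductive $\ggg_\bz$ under Hypothesis~\ref{hyp: standard} that normality is usually \emph{deduced from} Veldkamp's theorem rather than fed into it, so leaning on it risks circularity. Rephrasing the conclusion purely in terms of normality of $\frakZ''$ and integrality of $\frakZ$ over $\frakZ_p$ would make the sketch self-contained in the right direction.
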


Keep Veldkamp's theorem in mind. We have the following result.

\begin{prop}\label{prop: 3.2} There is an algebra isomorphism
$$\Frac(\widetilde\cz)\cong \Frac(\frakZ).$$
\end{prop}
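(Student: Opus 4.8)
The plan is to identify both $\Frac(\widetilde\cz)$ and $\Frac(\frakZ)$ with one and the same explicit field, compatibly. The key starting observation is that the $p$-center $\cz_p$ of $U(\ggg)$ and the $p$-center $\frakZ_p$ of $U(\ggg_\bz)$ coincide \emph{as subalgebras} of $U(\ggg)$: each is the polynomial ring $\bk[\xi_1,\dots,\xi_s]$ with $\xi_i=x_i^p-x_i^{[p]}$, the $x_i$ running over a basis of $\ggg_\bz$ and $s=\dim\ggg_\bz$, and the two $G_\ev$-actions agree (each being the coadjoint action on $\Spec\cz_p=\ggg_\bz^{*(1)}$). Write $R:=\cz_p=\frakZ_p$ and $R_0:=R^{G_\ev}=\cz_p^{G_\ev}=\frakZ_p^{G_\ev}$, so that $\Frac(R_0)\subseteq\Frac(R)$ are canonical subfields of $\Frac(U(\ggg))$. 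It will suffice to produce $\Frac(R_0)$-algebra isomorphisms
\[
\Frac(\widetilde\cz)\ \cong\ \Frac(R)\otimes_{\Frac(R_0)}\Frac(U(\hhh)^{\fw})\ \cong\ \Frac(\frakZ),
\]
in which the structure map $\Frac(R_0)\to\Frac(U(\hhh)^{\fw})$ is, in both cases, the one induced by the Harish--Chandra homomorphism.

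For the left isomorphism: $\widetilde\cz$ is generated by $R=\cz_p$ and $\cz_\hc=\cz^{G_\ev}$, and $\cz_\hc$ is integral over $R$ by Proposition~\ref{cev}(2). By Proposition~\ref{inj} and the proof of Theorem~\ref{secondtheorem}(1), the Harish--Chandra homomorphism $\gamma$ embeds $\cz_\hc$ into $U(\hhh)^{\fw}$ with $\Frac(\gamma(\cz_\hc))=\Frac(U(\hhh)^{\fw})$, and sends $R_0$ onto $(U(\hhh)\cap\cz_p)^{\fw}$, which equals $(U(\hhh)_p)^{\fw}$ for the $p$-center $U(\hhh)_p=\bk[H_1^p-H_1,\dots,H_r^p-H_r]$ of $U(\hhh)$, $r=\dim\hhh$. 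Because $\cz_\hc$ is integral over $R$, the subring $\Frac(R)[\cz_\hc]$ of $\Frac(\widetilde\cz)$ is already a field and hence equals $\Frac(\widetilde\cz)$; consequently the natural $\Frac(R)$-algebra map $\Frac(R)\otimes_{\Frac(R_0)}\Frac(\cz_\hc)\twoheadrightarrow\Frac(\widetilde\cz)$ is surjective. Its source has $\Frac(R)$-dimension $[\Frac(\cz_\hc):\Frac(R_0)]=[\Frac(\cz)^{G_\ev}:\Frac(\cz_p)^{G_\ev}]=p^{r}$ by Lemma~\ref{sep2}, Proposition~\ref{zfra} and the equality~\eqref{pcen}, while $\Frac(\widetilde\cz)=\Frac(\cz)$ by Theorem~\ref{secondtheorem}(2) and $[\Frac(\cz):\Frac(R)]=p^{r}$ by Proposition~\ref{sepc}; a surjection between finite-dimensional $\Frac(R)$-vector spaces of equal dimension is bijective, so the map is an isomorphism. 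Transporting through $\gamma$ then identifies $\Frac(\widetilde\cz)$ with $\Frac(R)\otimes_{\Frac(R_0)}\Frac(U(\hhh)^{\fw})$.

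For the right isomorphism: Veldkamp's theorem (Theorem~\ref{thm: Vel}) gives $\frakZ\cong\frakZ_p\otimes_{\frakZ_p^{G_\ev}}\frakZ_\hc=R\otimes_{R_0}\frakZ_\hc$, and the classical Harish--Chandra isomorphism for the reductive Lie algebra $\ggg_\bz$ (valid under Hypothesis~\ref{hyp: standard}; see \cite{KW}) identifies $\frakZ_\hc$ with $U(\hhh)^{\fw}$ via the Harish--Chandra homomorphism $\gamma_0$ of $U(\ggg_\bz)$, again sending $R_0$ onto $(U(\hhh)_p)^{\fw}$. The same argument applies --- now the source $\Frac(R)\otimes_{\Frac(R_0)}\Frac(\frakZ_\hc)$ is a localization of the domain $\frakZ$, hence a field, so the analogous surjection onto $\Frac(\frakZ)$ is an isomorphism --- and yields $\Frac(\frakZ)\cong\Frac(R)\otimes_{\Frac(R_0)}\Frac(U(\hhh)^{\fw})$. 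It remains to check that the two structure maps $\Frac(R_0)\to\Frac(U(\hhh)^{\fw})$ coincide, i.e.\ that $\gamma|_R=\gamma_0|_R$. On $R$ both homomorphisms annihilate the $p$-th powers of the even root vectors (which lie in $U(\nnn^{\pm})$) and restrict to the identity on $\bk[H_i^p-H_i]$, and the $\rho$-shift of $\gamma$ and the $\rho_0$-shift of $\gamma_0$ act trivially there because $(H_i+c)^p-(H_i+c)=H_i^p-H_i$ whenever $c\in\bbf_p$; this applies to $c=\rho(H_i)$ and $c=\rho_0(H_i)$, since every root takes values in $\bbf_p$ on the toral generators $H_i$ (as $H_i^{[p]}=H_i$) and $p>2$. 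Hence $\gamma|_R=\gamma_0|_R$, the two identifications agree, and therefore $\Frac(\widetilde\cz)\cong\Frac(\frakZ)$.

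The step I expect to demand the most care is this final compatibility: $\gamma$ and $\gamma_0$ are built from different triangular decompositions and different Weyl vectors $\rho,\rho_0$, so one must verify by explicit PBW bookkeeping that they nevertheless restrict to the same map on the shared $p$-center $R$ (and in particular on $R_0$), the decisive point being the $\bbf_p$-rationality of $\rho(H_i)$ and $\rho_0(H_i)$. The remaining ingredients --- the degree formula $[\Frac(U(\hhh)^{\fw}):\Frac((U(\hhh)_p)^{\fw})]=p^{r}$, Veldkamp's theorem, the classical Harish--Chandra isomorphism for $\ggg_\bz$, and the identifications $\Frac(\widetilde\cz)=\Frac(\cz)$ and $[\Frac(\cz):\Frac(\cz_p)]=p^{r}$ --- are all either recorded in the excerpt (Theorem~\ref{secondtheorem}, Propositions~\ref{sepc} and~\ref{zfra}, Lemma~\ref{sep2}) or standard under Hypothesis~\ref{hyp: standard}.
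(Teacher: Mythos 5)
Your proof is correct and follows essentially the same route as the paper's: both arguments identify $\cz_\hc$ and $\frakZ_\hc$ through $U(\hhh)^{\fw}$ via the two Harish--Chandra homomorphisms, use that the $p$-centers coincide, and combine Veldkamp's theorem with Theorem~\ref{secondtheorem}. Your write-up is somewhat more careful than the paper's at two points the paper treats implicitly --- the tensor-product/degree count replacing the assertion that $\widetilde\cz$ ``can be regarded as a subalgebra of $\frakZ$,'' and the explicit check that $\gamma$ and the classical Harish--Chandra map agree on the shared $p$-center --- but the underlying argument is the same.
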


\begin{proof} By definition, $\cz_p$ actually coincides with $\frakZ_p$.  We focus the concern on $\cz_\hc$ and $\frakZ_\hc$. Owing to Proposition \ref{inj}, $\cz_\hc$ can be regarded a subalgebra of $U(\hhh)^\fw$ under the injective homomorphism $\gamma$ which arises from the  Harish-Chandra homomorphism in the super case. Note that the classical Harish-Chandra theorem shows that there is an isomorphism from $\mathcal{HC}: U(\ggg_\bz)^{G_\ev}$ onto $U(\hhh)^\fw$ (see for example, \cite[\S9]{Jan}). So there exists an injective homomorphism $\vartheta$  from $\cz_\hc$ into $\frakZ_\hc$, which is exactly $\mathcal{HC}^{-1}\circ \gamma$. So $\cz_\hc$ can be regarded a subalgebra of $\frakZ_\hc$, under $\vartheta$.

Note that $\widetilde{\cz}$ is generated by $\cz_p$ and
$\cz_\hc$. Keep in mind that $\cz_p=\frakZ_p$.  Then $\cz_p\cap \cz_\hc=\cz_p^{G_\ev}$ which coincides with
$\frakZ_p^{G_\ev}$. Therefore, by Theorem \ref{thm: Vel} we can regard
$\widetilde\cz$ as a subalgebra of $\frakZ$ which is generated by
$\frakZ_p$ and $\vartheta(\cz_\hc)$.
So $\Frac(\widetilde\cz)$ is
isomorphic to the subfield of $\Frac(\frakZ)$ generated by $\Frac(\cz_p)$ and
$\Frac(\vartheta(\cz_\hc))$.
However, under $\gamma$ the fraction $\Frac(\cz_\hc)$ is isomorphic to the fraction of $U(\hhh)^\fw$ (Theorem \ref{secondtheorem}(1)),  which is exactly isomorphic to
$\Frac(\frakZ_\hc)$. Hence $\Frac(\widetilde\cz)$ is isomorphic to  the subfield of $\Frac(\frakZ)$ generated by $\Frac(\frakZ_p)$ and
$\Frac(\frakZ_\hc)$. This subfield exactly coincides with
$\Frac(\frakZ)$.  Hence  $\Frac(\widetilde\cz)$ is isomorphic to
$\Frac(\frakZ)$. The proof is completed.
\end{proof}

By combination of  the above proposition and Theorem \ref{secondtheorem}, we have

\begin{theorem}\label{thm: two cent birational eq} There is an isomorphism of  fractions
 $$\Frac(\cz)\cong \Frac(\frakZ).$$
\end{theorem}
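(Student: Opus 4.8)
The plan is to obtain the statement as an immediate corollary of the two structural results already established, namely Theorem \ref{secondtheorem}(2) and Proposition \ref{prop: 3.2}; since all the genuine work is done there, the ``proof'' here consists of composing the two field isomorphisms they supply.

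First I would recall Theorem \ref{secondtheorem}(2), which gives the equality $\Frac(\cz)=\Frac(\widetilde\cz)$ inside the division algebra $\cd(\ggg)=U(\ggg)\otimes_\cz\bbf$; concretely, this says the inclusion $\widetilde\cz\hookrightarrow\cz$ realizes a birational equivalence between $\Spec(\cz)$ and $\Spec(\widetilde\cz)$, where $\widetilde\cz$ is the subalgebra generated by the $p$-center $\cz_p$ and the Harish--Chandra center $\cz_\hc=\cz^{G_\ev}$. Next I would invoke Proposition \ref{prop: 3.2}, which produces an algebra isomorphism $\Frac(\widetilde\cz)\cong\Frac(\frakZ)$ for $\frakZ$ the center of $U(\ggg_\bz)$. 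Concatenating these,
$$\Frac(\cz)=\Frac(\widetilde\cz)\cong\Frac(\frakZ),$$
which is precisely the assertion; equivalently, $\Spec(\cz)$ and $\Spec(\frakZ)$ are birationally equivalent via $\widetilde\cz$.

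There is essentially no obstacle remaining at this last step: the content has been front-loaded into Proposition \ref{prop: 3.2}, where one matches the super Harish--Chandra picture (the injection $\gamma\colon\cz_\hc\hookrightarrow U(\hhh)^\fw$ of Proposition \ref{inj}, together with $\Frac(\gamma(\cz_\hc))=\Frac(U(\hhh)^\fw)$ from Theorem \ref{secondtheorem}(1)) against Veldkamp's theorem for the reductive Lie algebra $\ggg_\bz$, using $\cz_p=\frakZ_p$, $\cz_p\cap\cz_\hc=\cz_p^{G_\ev}=\frakZ_p^{G_\ev}$, and the classical isomorphism $U(\hhh)^\fw\cong U(\ggg_\bz)^{G_\ev}=\frakZ_\hc$. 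The only point worth a word of care is compatibility of the two identifications: the isomorphism of Proposition \ref{prop: 3.2} is built from the inclusions of the subalgebras $\cz_p,\cz_\hc$ into $\cz$, while $\widetilde\cz$ is literally the subalgebra of $\cz$ they generate, so the equality $\Frac(\cz)=\Frac(\widetilde\cz)$ of Theorem \ref{secondtheorem}(2) plugs in with no mismatch. Hence the composite field isomorphism $\Frac(\cz)\cong\Frac(\frakZ)$ is well defined and the theorem follows.
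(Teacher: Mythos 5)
Your proposal is correct and is exactly how the paper proves this: the paper states the theorem as an immediate consequence of Theorem \ref{secondtheorem}(2) combined with Proposition \ref{prop: 3.2}, which is precisely the composition of isomorphisms you carry out. Nothing further is needed.
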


\subsection{}
Set $\bZ=\Spec(\cz)$ and $\bzw=\Spec(\widetilde\cz)$. The affine scheme $\bZ$ is called the Zassenhaus variety for $\ggg$ as usual. By Theorem \ref{secondtheorem}, the Zassenhaus variety $\Spec(\cz)$ for $\ggg$ is birationally equivalent to $\bzw$. Due to Proposition \ref{prop: 3.2}, the Zassenhaus variety $\Spec(\frakZ)$ is birationally equivalent to $\bzw$. So we can regard that $\Spec(\cz)$ and $\Spec(\frakZ)$ are birationally equivalent.

\subsection{} Under Hypothesis \ref{hyp: standard}, Tange shows that
$\Frac(\frakZ)$ is rational for $\frakZ=\text(Cent)(U(\g))$ with $\g=\Lie(\bG)$ being a reductive Lie algebra (see \cite{T}). Hence we have the following corollary to Theorem \ref{thm: two cent birational eq}.

\begin{corollary} Suppose $\ggg=\ggg_\bz\oplus\ggg_\bo$ is a basic classical Lie superalgebra as the list in \S\ref{tab: basic cla}, and $\ggg_\bz=\Lie(G_\ev)$ satisfies Hypothesis \ref{hyp: standard}. Then $\bZ$ is rational, i.e. $\Frac(\cz)$ is purely transcendental over $\bk$.
\end{corollary}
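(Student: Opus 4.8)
The plan is to read off the corollary by combining the birational equivalence of the two Zassenhaus varieties with Tange's rationality theorem; no new idea is needed beyond checking that the standing hypotheses place $\ggg_\bz$ in the range of \cite{T}. First I would invoke Theorem~\ref{thm: two cent birational eq}, which supplies an isomorphism of fields $\Frac(\cz)\cong\Frac(\frakZ)$ over $\bk$, where $\frakZ=\text{Cent}(U(\ggg_\bz))$. Since an affine variety is rational precisely when its function field is purely transcendental over the ground field, and since pure transcendence of a field extension is preserved under isomorphism of $\bk$-algebras, it suffices to prove that $\Frac(\frakZ)$ is purely transcendental over $\bk$.

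For this I would apply Tange's theorem \cite{T}. By construction in \S\ref{tab: basic cla}, $\ggg_\bz=\Lie(G_\ev)$ with $G_\ev$ a connected reductive algebraic group, and by the hypothesis of the corollary $G_\ev$ satisfies Hypothesis~\ref{hyp: standard}: its derived group is simply connected, $p$ is a good prime, and $\ggg_\bz$ admits a non-degenerate $G_\ev$-invariant bilinear form. These are exactly the standard hypotheses under which \cite{T} shows the Zassenhaus variety $\Spec(\frakZ)$ to be rational. In the entries of the table where $\ggg_\bz$ is a direct sum of reductive ideals (including the central $\bk$-summand occurring for $\mathfrak{sl}(m|n)$), the variety $\Spec(\frakZ)$ factors, via Veldkamp's theorem (Theorem~\ref{thm: Vel}) applied summandwise, as a finite product of the corresponding rational Zassenhaus varieties together with an affine space coming from any central torus; a finite product of rational varieties being rational, this causes no difficulty. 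Chaining the two steps, $\Frac(\cz)\cong\Frac(\frakZ)$ is purely transcendental over $\bk$, i.e. $\bZ=\Spec(\cz)$ is rational.

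I do not expect a genuine obstacle: the two substantive ingredients — the isomorphism $\Frac(\cz)\cong\Frac(\frakZ)$ of Theorem~\ref{thm: two cent birational eq} and Tange's theorem — are already in hand, and the only point needing (entirely routine) attention is the bookkeeping that the assumptions in \S\ref{tab: basic cla} together with Hypothesis~\ref{hyp: standard} meet the precise hypotheses of \cite{T} for every reductive factor of $\ggg_\bz$.
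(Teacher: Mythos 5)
Your proposal is correct and is essentially the paper's own argument: the corollary is obtained by combining the field isomorphism $\Frac(\cz)\cong\Frac(\frakZ)$ of Theorem \ref{thm: two cent birational eq} with Tange's rationality theorem for the reductive Lie algebra $\ggg_\bz$ under Hypothesis \ref{hyp: standard}. Your extra remark on treating the reductive summands of $\ggg_\bz$ separately is harmless bookkeeping that the paper leaves implicit.
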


\subsection*{Acknowledgement} The authors are grateful to the referee for his/her helpful comments and suggestions which enable them to improve and revise the manuscript.


\begin{thebibliography} {ABCD}



\bibitem{Ben} D. J. Benson, {\em Representations and cohomology,  I: Basic representation theory of fintie groups and associative algebras}, Cambridge University Press, 1991

\bibitem{Ber} F. A. Berezin, {\em Introduction to superanalysis}, D. Reidel Publishing Company Dordrecht.



\bibitem{BB} A. Bialynicki-Birula, {\em Quotients by actions of groups},  Algebraic quotients. Torus actions and cohomology. The adjoint representation and the adjoint action, pp.1-82, Encyclopaedia Math. Sci., 131, Springer, Berlin, 2002.

\bibitem {Bor} A. Borel, {\em Linear algebraic groups}, second edition, GTM 126, Springer-Verlag, 1991.


\bibitem {BKLS} S. Bouarroudj, A. Krutov, D. Leites and I. Shchepochkina, {\itshape Non-degenerate invariant (super)symmetric bilinear forms on simple Lie (super)algebras} Algebr. Represent. Theory  21  (2018),  no. 5, 897-941.



\bibitem {BGo} K. A. Brown and I. Gordon, {\em The ramification of centers: Lie
algebras in positive characteristic and quantised enveloping
algebras}, Math. Z. 238 (2001), 733-779.


\bibitem{BGoo} K. A. Brown, K. R. Goodearl,\textit{ Homological aspects of Noetherian PI Hopf algebras
and irreducible modules of maximal dimension}, Journal of algebra 198 (1997), 240-265.

\bibitem{BLW} J. Brundan, I. Losev and B. Webster, {\em
Tensor product categorifications and the super Kazhdan-Lusztig conjecture}, Int. Math. Res. Not. IMRN(2017), 6329-6410.




\bibitem {Bos2} H. Boseck, {\em Classical Lie supergroups}, Math. Nachr.
148 (1990),  81-115.

\bibitem {BK} J. Brundan and A. Kleshchev, {\em Projective
representations of symmetric groups via Sergeev duality}, Math. Z.
239 (2002), 27-68.


\bibitem{BKu} J. Brundan and J. Kujawa, {\em A new proof of the Mullineux conjecture}, J. Algebraic Combin. 18 (2003), 13-39.

\bibitem{CLW}
S.-J. Cheng, N. Lam and W. Wang, {\em The Brundan-Kazhdan-Lusztig conjecture for general linear Lie superalgebras},
Duke Math. J. 164 (2015), 617-695.



\bibitem{CSW} S.-J. Cheng, B. Shu and W. Wang, {Modular representations of exceptional supergroups}, Math. Z. 291(2019),  635-659.

\bibitem{CW} S.-J. Cheng and W. Wang, {\em Dualities and Representations of Lie Superalgebras}, GSM 144, Amer. Math. Soc., Providence, RI, 2013.

\bibitem {CKP} C. De Concini, V. G. Kac and C. Procesi, {\em Quantum coadjoint action},
J. Amer. Math. Soc. 5 (1992), 151-189.



\bibitem{FG1} R. Fioresi and F. Gavarini, {\em Chevalley supergroups}, Mem. Amer. Math. Soc. 215 (2012), no. 1014.

\bibitem{FG2} R. Fioresi and F. Gavarini, {\em Algebraic supergroups with Lie superalgebras of classical type},
Journal of Lie Theory 23 (2013), 143-158

 \bibitem{Gav} F. Gavarini, {\em Chevalley supergroups of type $D(2,1;a)$}, Proc. Edin. Math. Soc. 57 (2014), 465-491.

 \bibitem{Gor} M. Gorelik, {\em On the ghost centre of Lie superalgebras}, Ann. Inst. Fourier (Grenoble) 50(2000), 1745-1764.


\bibitem{Hum} J. E. Humphreys, {\em  Linear algebraic groups}, Springer-Verlag, New York, 1981.

\bibitem{Hum08} J. E. Humphreys, {\itshape Representations of semisimple Lie algebras in the BGG category $O$}, Graduate Studies in Mathematics 94, Amer. Math. Soc., 2008.

\bibitem {Jan} J. C. Jantzen, {\em Representations of Lie algebras in prime characteristic}, Representation theories and Algebraic Geometry, Proceedings Montr\'{e}al 1997 (NATO ASI Series C 514), pp. 185-235, Dordrecht etc. Kluwer, 1998.

\bibitem{Jo} T. J\'{o}zefiak, {\em Semisimple superalgebras},
Lecture Notes in Math. 1352, pp. 96-113, Springer, Berlin, 1988.




\bibitem {Kac2} V. G. Kac, {\em Lie superalgebras}, Adv. Math. 26 (1977),
8-96.

\bibitem {Kac3} V. G. Kac, {\em Characters of typical representations of classical Lie superalgebras}, Comm. Algebra 5 (1977), 889-897.



\bibitem {KW} V. Kac and B. Weisfeiler, {\em Coadjoint action of a
semi-simple algebraic group and the center of the enveloping algebra
in characteristic $p$}, Indag. Math. 38 (1976), 136-151.

\bibitem{Lang} S. Lang, {\em Linear algebra} (third ed.),  Springer Verlag, New York, 1987.


\bibitem{LS} Y. Li and B. Shu, {\em Jantzen filtration of Weyl modules for general linear supergroups}, Forum Math. 35 (2023), 1435-1468.


\bibitem{McRo} J. C. McConnell and J. C. Robson, {\itshape Noncommutative Noetherian rings}, Wiley-Interscience,
New York, 1987.

\bibitem {MR} I. Mirkovi\'{c}, D. Rumynin, {\em Centers of reducecd
enveloping algebra}, Math. Z. 231 (1999), 123-132.

\bibitem {MFK} D. Mumford, J. Fogarty and F. Kirwan, {\em Geometric invariant theory}, third enlarged edition, Springer-Verlag, 1992.

\bibitem {Musson} I. M. Musson, {\em Lie superalgebras and enveloping algebras},  GSM 131, Amer. Math. Soc., Providence, RI, 2012.

\bibitem{PS} L. Pan and B. Shu, {\em Jantzen filtration and strong linkage principle for modular Lie superalgebras}, Forum Math. 30 (2018), 1573-1598.



\bibitem {Ros} M. Rosenlicht,  {\em A remark on quotient spaces}, Anais.
Acad. Brasil Cienc. 35 (1963), 487-489.


\bibitem{ShuZheng} B. Shu and L. Zheng, {\em A survey on centers and blocks for universal enveloping algebras of Lie superalgebras}, accpepted for the 18th ICCM proceedings, Internatioal Press.


\bibitem {Sha} I. R. Shafarevich, {\em  Basic algebraic geometry}, Second
edition, Springer-Verlag, 1994.




\bibitem {SW} B. Shu and W. Wang, {\em Modular representations of the
otho-syplectic supergroups}, Proc. London Math. Soc. 96 (2008), 251-271.

\bibitem{SZeng} B. Shu and Y. Zeng, {\em Centers and Azumaya loci for finite W-algebras in positive characteristic}, Math. Proc. Camb. Phil. Soc.  173 (2022), 35-66.

\bibitem {SZheng} B. Shu and L. Zheng, {\em On Lie superalgebras of algebraic supergroups}, Algebra Colloquium 16 (2009),
    361-370.



\bibitem{Steinb} R. Steinberg, {\em
Regular elements of semisimple algebraic groups}, Inst. Hautes $\acute{E}$tudes Sci. Publ. Math.(1965), no.25, 49-80.

\bibitem{StF} H. Strade and R. Farnsteiner, {\em Modular Lie algebras and their representations}, Monogr. Textbooks Pure Appl. Math., 116
Marcel Dekker, Inc., New York, 1988.

\bibitem {Sp} T. A. Springer, {\em Linear algebraic groups}, Sencond
edition, Birkh\"{a}user, 1998.

\bibitem{T}  R. Tange,  {\em The Zassenhaus variety of a reductive Lie algebra in positve characteristic}, Advances in Math. 224 (2010). 340-354.

\bibitem{Var} V. S. Varadarjan, {\em Supersymmetry for mathematicians; an introduction}, Courant Lecture Notes 11, Amer. Math. Soc., Providence, RI, 2004.

\bibitem {Ve} F. D. Veldkamp, {\em The center of  the universal enveloping algebra of a Lie algebra in characteristic $p$}, Ann. Sci. Ecole Norm. Sup. 5 (1972), 217-240.


\bibitem{Wa} C. T. C. Wall, {\em Graded Brauer groups}, J. Reine Angew. Math. 213 (1964) 187-199.


\bibitem {WZ} W. Wang and L. Zhao, {\em Representations of Lie
superalgebras in prime characteristic I}, Proc. London Math. Soc.
99 (2009), 145-167.

\bibitem {WZ2} W. Wang and L. Zhao, {\em Representations of Lie superalgebras in prime characteristic II: The queer series}, J. Pure Appl. Algebra 215 (2011), 2515-2532.
{
\bibitem{Yao}  Y. Yao, {\em Note on primitive ideals of enveloping algebras in prime characteristic}, Algebra Colloq. 18 (2011), 701-708.}


\bibitem {Zass}  H. Zassenhaus, {\em The representations of Lie algebras in prime characteristic}, Proc. Glasgow Math. Ass. 21 (1954), 1-36.

    \bibitem {ZL} Y. Zhang and W. Liu, {\em  Modular Lie Superalgebras}, Scientific Press, Beijing, 2001 (in Chinese).

        \bibitem{Zheng} L. Zheng, {\em Classical Lie superalgebras in prime characteristic and their representations}, Ph.D thesis, 2009, East China Normal University.


\end{thebibliography}
\end{document}